\tikzset{negated/.style={
        decoration={markings,
            mark= at position 0.5 with {
                \node[transform shape] (tempnode) {$\backslash$};
            }
        },
        postaction={decorate}
    }
}
\def\@firstoftwo@second#1#2{%
  \def\temp##1.##2\@nil{##2}%
   \temp#1\@nil}
\newcommand\sref[1]{%
   \expandafter\@setref\csname r@#1\endcsname\@firstoftwo@second{#1}%
}
\DeclareMathOperator{\Hom}{\mathscr{H}\text{\kern -3pt {\calligra\large om}}\,}
\DeclareMathOperator{\Diff}{\mathscr{D}\text{\kern -3pt {\calligra  iff }}\,}
\newtheorem{theo}{Theorem}[section] 
\newtheorem*{theo*}{Theorem} 
\newtheorem{lemm}[theo]{Lemma}
\newtheorem*{conj*}{Conjecture} 
\newtheorem{coro}[theo]{Corollary}
\newtheorem{prop}[theo]{Proposition}
\newtheorem{rema}[theo]{Remark}
\theoremstyle{definition}
\newtheorem{setu}[theo]{Setup}
\newtheorem{exam}[theo]{Example}
\newtheorem*{exam*}{Example} 
\newtheorem{defi}[theo]{Definition}
\newcommand{\CC}{\mathbb{C}}
\newcommand{\PP}{\mathbb{P}}
\newcommand{\RR}{\mathbb{R}}
\newcommand{\ZZ}{\mathbb{Z}}
\newcommand{\cI}{\mathcal{I}}
\newcommand{\cB}{\mathcal{B}}
\newcommand{\cL}{\mathcal{L}}
\newcommand{\cU}{\mathcal{U}}
\newcommand{\cR}{\mathcal{R}}
\newcommand{\cP}{\mathcal{P}}
\newcommand{\cQ}{\mathcal{Q}}
\newcommand{\cJ}{\mathcal{J}}
\newcommand{\cF}{\mathcal{F}}
\newcommand{\sO}{\mathscr{O}}
\newcommand{\sP}{\mathscr{P}}
\newcommand{\rk}{\operatorname{rk}}
\def\and{\quad\mathrm{and}\quad}
\def\Hom{{\textup{Hom}}}
\def\beq{\begin{equation}}
\def\eeq{\end{equation}}
\begin{document}
\title[Syzygy Rank Characterization of Strongly Euler Homogeneity]{A Syzygy Rank Characterization of Strongly Euler Homogeneity for Projective Hypersurfaces}

\author{Xia Liao}
\email{xliao@hqu.edu.cn}
\address{Department of Mathematical Sciences, 
Huaqiao University, 
Chenghua North Road 269, 
Quanzhou, Fujian, China}

\author{Xiping Zhang}
\email{xzhmath@gmail.com}
\address{
School of Mathematical Sciences, Key Laboratory of Intelligent Computing and Applications (Ministry of Education),
Tongji University, 
1239 Siping Road, 
Shanghai, China
}

\keywords{logarithmic derivation, quasi homogeneity, strongly Euler homogeneity, syzygy matrix.}

\begin{abstract} 
In this paper we give a characterization of strongly Euler homogeneous singular points on a reduced complex projective hypersurface  $D=V(f)\subset \PP^n$ using the  Jacobian syzygies  of $f$. The characterization compares the ranks of the first syzygy matrices of the global Jacobian ideal $J_f$ and its quotient $J_f/(f)$.  
When $D$ has only isolated singularities, our characterization refines a recent result of Andrade-Beorchia-Dimca-Mir\'{o}-Roig. We also prove a generalization of this characterization to smooth projective toric varieties.
\end{abstract}
\maketitle

\section{Introduction}

Let $g:(\CC^n,0) \to (\CC,0)$ be a complex analytic singularity. This singularity is quasi-homogeneous if one can express $g$ as a weighted homogeneous polynomial (with strictly positive weights) in certain holomorphic coordinate system $x_1,\ldots, x_n$ of $\CC^n$. Quasi-homogeneous isolated hypersurface singularity is a considerably well-studied type of singualrities in algebraic geoemtry. A relevant but less well-known singularity type is strongly Euler homogeneous singularity. Here we recall its definition.  

\begin{defi}
\label{defn; SEH}
If in some holomorphic local coordinates of $(\CC^n,0)$ we have $g=\sum_{i=1}^n a_i \frac{\partial g}{\partial x_i}$ with $a_i\in \mathfrak{m}_{\CC^n,0}$ for every $i$, then we say the function germ $g$ is strongly Euler homogeneous. Let $D$ be a reduced hypersurface on a complex manifold $M$. We say that $D$ is strongly Euler homogeneous at a point $p\in D$ if $D$ is defined by a strongly Euler homogeneous holomorphic function germ in some analytic neighborhood of $p$. 

\end{defi}

Note that quasi-homogeneous singularities are strongly Euler homogeneous, but not vice versa. If the hypersurface  singularity is isolated, a celebrated result of K.Saito (see \cite{KSaito71}) states that the singularity is strongly Euler homogeneous if and only if it is quasi-homogeneous. Recently, the characterisation of quasi-homogeneity and the strong Euler homogeneity of hypersurface singularities have received some renewed interest (see \cite{ABDM25},\cite{ABM25},\cite{Rodriguez25},\cite{YZ17}).

Let $f\in R:=\CC[x_0, x_1, \cdots ,x_n]$ be a degree $d$ homogeneous polynomial that defines a reduced hypsrsurface $D=V(f)\subset \PP^n$. Under the assumption that $D$ has only isolated singularities, it is shown in \cite{ABDM25} that the rank of the first syzygy matrix of the global Jacobian ideal $J_f$ (see Definition \ref{defi:syzygyrank} below) can be used to characterize the quasi-homogeneity of $D$ at its singular points. With the aid of a computer algebraic system, this result can help to find the quasi-homogeneous isolated singularities on a projective hypersurface very effectively.

The simple observation we make in this paper is that, the main results in \cite{ABDM25} and \cite{ABM25} can be derived directly from a special case of Rodr\'{i}guez's characterization of strong Euler homogeneity in \cite[Proposition 2.6]{Rodriguez25}. An equivalent geometric characterization of strong Euler homogeneity using the language of log transversality when $D$ is a free divisor is given in \cite[Proposition 5.10]{LZ24-1}. If we employ the full statement of Rodr\'{i}guez's theorem, we can obtain a characterization of strong Euler homogeneity of any projective hypersurface using syzygy ranks. 

To be more precise, let $J_f=(f_{x_0}, \cdots , f_{x_n})$ be the global Jacobian ideal. It admits a free resolution of the following form: 
\[
\begin{tikzcd}
\cdots \arrow[r, ] & \bigoplus_{i=1}^m R(-d_i) \arrow[r, "M'_f"] & R^{n+1} \arrow[r, ""] & J_f(d-1) \arrow[r, ] & 0.
\end{tikzcd}
\]

Here we refrain from talking about the minimal free resolution of $J_f$ because the rank of its zeroth syzygy may be less than ${n+1}$, e.g. $f$ misses some variables. Since $f$ is homogeneous we have $\sum_{i=0}^n x_i f_{x_i} =d\cdot f$ so that $f\in J_f$. The 
above free resolution  induces the following   free resolution of $\left(J_f/(f)\right)(d-1)$:
\[
\begin{tikzcd}
\cdots \arrow[r, ] & \bigoplus_{i=0}^m R(-d_i)  \arrow[r, "M_f"] & 
 R^{n+1} \arrow[r, ""] & \left(J_f/(f)\right)(d-1) \arrow[r, ] & 0
\end{tikzcd} 
\]
where $d_0=1$ and the generator of $R(-d_0)$ is sent to  $(x_0,x_1, \cdots ,x_n)$ by $M_f$.

Given any $p=[p_0:p_1:\cdots : p_n]\in D$,  we may evaluate the  matrices $M'_f(\lambda p)$ and $M_f(\lambda p)$ 
at any point $\lambda p:=(\lambda p_0, \lambda p_1, \cdots , \lambda p_n)$  in the affine cone $\hat{D}$. The scalar matrices $M'_f(\lambda p)$ and $M_f(\lambda p)$   changes along $\lambda$, but their ranks are independent of $\lambda\neq 0$. We denote them  by $\rk M'_f(p)$ and $\rk M_f(p)$.  
\begin{defi}
\label{defi:syzygyrank}
We call the     matrix $M'_f$  the first syzygy matrix of $J_f$ and call the    matrix $M_f$  the first augmented syzygy matrix of $J_f$.  
For any point $p\in D$, we call  $\rk M'_f(p)$ the syzygy rank of $f$ at $p$ and call  $\rk M_f(p)$ the augmented syzygy rank of $f$ at $p$. 
\end{defi}

A direct translation of Rodr\'{i}guez's theorem from local analytic setting to global projective setting yields the following result, proved in Theorem~\ref{theo; syzygyrankcriterion}. 
\begin{theo}[Syzygy Rank Criterion]
The projective hypersurface $D$ is strongly Euler homogeneous at a point $p\in D$ if and only if 
\[
\rk M'_f(p)  =\rk M_f(p)  \/.
\]
 \end{theo}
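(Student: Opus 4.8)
The plan is to obtain this as the ``affine cone version'' of Rodr\'{i}guez's local criterion \cite[Proposition 2.6]{Rodriguez25}: pass from the projective germ $(D,p)$ to the germ of the affine cone, reinterpret strong Euler homogeneity there as a statement about local Jacobian syzygies using the Euler relation, and then identify the local syzygy ranks at a cone point with the global ones recorded by $M'_f$ and $M_f$. Throughout, fix a representative $\lambda p=(\lambda p_0,\dots,\lambda p_n)\in\CC^{n+1}\setminus\{0\}$, write $\hat D=V(f)\subset\CC^{n+1}$ for the affine cone, and let $\cO=\cO_{\CC^{n+1},\lambda p}$ with maximal ideal $\mathfrak m$. \emph{Step 1 (reduction to the cone).} First I would show that $D$ is strongly Euler homogeneous at $p$ if and only if $\hat D$ is strongly Euler homogeneous at $\lambda p$. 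Since $\hat D$ is a cone and $\lambda p\neq 0$, the scaling $\CC^*$-action identifies $(\hat D,\lambda p)$ locally with a product germ $(\CC,0)\times(D,p)$ whose defining function is pulled back from the second factor, and such a product germ is strongly Euler homogeneous exactly when the factor $(D,p)$ is: an Euler identity for $(D,p)$ also witnesses one for the product with a zero coefficient for the cone variable $t$, and conversely an Euler identity for the product restricts to one for $(D,p)$ on the slice $\{t=0\}$, on which the defining equation does not involve $t$. It is convenient here to also record the coordinate-free form of Definition~\ref{defn; SEH}: since $f_{x_0},\dots,f_{x_n}$ generate the (coordinate-independent) Jacobian ideal of the germ of $f$ at $\lambda p$, the cone $\hat D$ is strongly Euler homogeneous at $\lambda p$ if and only if $f=\sum_{i=0}^n b_i f_{x_i}$ in $\cO$ for some $b_0,\dots,b_n\in\mathfrak m$.

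\emph{Step 2 (the Euler relation).} Using $\sum_i x_i f_{x_i}=d f$, I would show that $\hat D$ is strongly Euler homogeneous at $\lambda p$ if and only if there is a local Jacobian syzygy $\mathbf t=(t_0,\dots,t_n)\in\cO^{n+1}$ (that is, $\sum_i t_i f_{x_i}=0$ in $\cO$) with $\mathbf t(\lambda p)=\lambda p$. Indeed, from such a $\mathbf t$ the vector field $\delta=\tfrac1d\sum_i(x_i-t_i)\partial_{x_i}$ satisfies $\delta(f)=\tfrac1d\,df=f$, by the Euler relation together with $\sum_i t_i f_{x_i}=0$, while all its coefficients vanish at $\lambda p$; so $\hat D$ is strongly Euler homogeneous there. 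Conversely, from an expression $f=\sum_i b_i f_{x_i}$ with $b_i\in\mathfrak m$ one sets $t_i:=x_i-d\,b_i$ and obtains a local Jacobian syzygy with $\mathbf t(\lambda p)=\lambda p$. This is exactly \cite[Proposition 2.6]{Rodriguez25} applied to $\hat D$ at $\lambda p$, with the homogeneity of $f$ supplying the distinguished relation $\sum_i x_i f_{x_i}\in(f)$ for free; I would include the short verification above for completeness.

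\emph{Step 3 (globalization).} Since $\cO$ is flat over $R$, the formation of kernels commutes with $-\otimes_R\cO$, so $\ker(\cO^{n+1}\to\cO)=\ker(R^{n+1}\to R)\otimes_R\cO$; in particular the module of local Jacobian syzygies is generated over $\cO$ by the columns of $M'_f$. Hence the set $\{\mathbf t(\lambda p):\mathbf t\text{ a local Jacobian syzygy}\}$ equals the $\CC$-span of the columns of the scalar matrix $M'_f(\lambda p)$, a subspace of dimension $\rk M'_f(p)$. By Step~2, $\hat D$ is strongly Euler homogeneous at $\lambda p$ if and only if the vector $\lambda p$ lies in this span; and since the columns of $M_f(\lambda p)$ are those of $M'_f(\lambda p)$ together with $(x_0,\dots,x_n)(\lambda p)=\lambda p$, this is equivalent to $\rk M_f(\lambda p)=\rk M'_f(\lambda p)$, i.e.\ to $\rk M_f(p)=\rk M'_f(p)$. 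Combined with Step~1 this proves the theorem.

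I expect the main obstacle to be Step~1: strong Euler homogeneity is superficially coordinate-dependent, and one must check that it genuinely transfers across the local product decomposition $(\hat D,\lambda p)\cong(\CC,0)\times(D,p)$ under arbitrary analytic coordinate changes --- equivalently, that the reformulation of strong Euler homogeneity in terms of a relation $f=\sum_i b_i f_{x_i}$ with $b_i\in\mathfrak m$ is correct and interacts well with products. Steps~2 and~3 are short and largely formal once this reformulation is in hand: Step~2 is forced by the homogeneity of $f$, and Step~3 is just the observation that evaluating the $\cO$-module generated by the columns of $M'_f$ at the point $\lambda p$ returns the $\CC$-span of the columns of $M'_f(\lambda p)$, so no rank is lost in passing between local and global syzygies.
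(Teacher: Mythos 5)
Your argument is correct, but it takes a genuinely different route from the paper's. The paper stays on $\PP^n$: it quotes Rodr\'{i}guez's criterion in the sheaf-theoretic form $\rk j'(p)=\rk j(p)+1$ (Proposition~\ref{prop; logrankSEH}) for the logarithmic and augmented logarithmic morphisms built from $\textup{Der}_{\PP^n}(-\log D)$ and the principal parts bundle, and then proves two lemmas (Lemmas~\ref{lemm; syzygyrank} and~\ref{lemm; augsyzygyrank}) identifying $\rk j'(p)$ with $\rk M'_f(p)$ and $\rk j(p)+1$ with $\rk M_f(p)$, using the Serre correspondence between graded modules and sheaves, the splitting $\cP^1_{\PP^n}\sO_{\PP^n}(d)\cong\sO_{\PP^n}(d-1)^{n+1}$, and the Euler sequence. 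You instead descend to the affine cone: the product decomposition $(\hat D,\lambda p)\cong(\CC,0)\times(D,p)$ reduces the question to the cone point, the Euler relation converts strong Euler homogeneity there into the existence of a Jacobian syzygy evaluating to $\lambda p$, and flatness of the analytic local ring over $R$ identifies local with global syzygies, so the condition becomes membership of $\lambda p$ in the column span of $M'_f(\lambda p)$, i.e.\ $\rk M_f(p)=\rk M'_f(p)$. This is more elementary and essentially self-contained --- it re-derives the needed special case of Rodr\'{i}guez's criterion rather than citing it, and avoids principal parts and sheafification entirely --- at the cost of the cone lemma in Step~1, whose delicate points (independence of strong Euler homogeneity from the choice of coordinates and of the reduced defining equation, and its behaviour under taking a product with a smooth factor) you correctly identify and correctly resolve via the reformulation $f\in\mathfrak m_{\lambda p}\cdot(f_{x_0},\dots,f_{x_n})$. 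What your route does not provide is the structural framework ($j$, $j'$, logarithmic ranks) that the paper reuses for the log characteristic cycle results of \S\ref{section:SEHandlogCC} and the toric generalization of \S\ref{sec; toric}; as a proof of this one theorem, however, it is complete and arguably cleaner.
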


 In \S\ref{sec; toric} we generalize this theorem to    smooth projetcive toric varieties. 
Let $X$ be a smooth projetcive toric variety and $D\subset X$ be a reduced hypersurface cut by a global function $f$.
Similarly we define global Jacobian  syzygies and introduce matrices $M'_f$ and  $M_f$ in \eqref{seq; toricsyzygy1} and \eqref{seq; toricsyzygy2}.
The major difference in the toric setting is  the logarithmic defect $\textup{Def}_{X,f}(p)$ we introduce  in Definition~\ref{defi; logdefect}. This is a non-negative integer and equals $0$ when  $X=\PP^n$ and $D$ is  any reduced hypersurface. In Theorem~\ref{theo; toricsyzygy} we prove     that
$D$ is strongly Euler homogeneous at a point $p\in D$ if and only if 
\[
\rk M'_f(p) + \textup{Def}_{X,f}(p)=\rk M_f(p)  \/.
\]

It can be shown that $\rk M_f(p)=\rk \textup{Pic}(X)$, the Picard number of $X$ whenever $p$ is an isolated singular point of $D$.  
For projective hypersurfaces,  since $\rk M'_f(p) \leq \rk M_f(p)=1$, in Corollary~\ref{coro; isohyper} we give a refined version of  \cite[Theorem 1.1]{ABDM25}.
\begin{coro}
Let $p$ be an isolated singular point of a projective hypersurface $D$, then  
\begin{enumerate}  
\item  $\rk M'_f(p)=1$ if and only if   $D$ is  quasi-homogeneous at $p$. 
\item  $\rk M'_f(p)=0$ if and only if   $D$ is  not quasi-homogeneous at $p$. 
\end{enumerate}   
\end{coro}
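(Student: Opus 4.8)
The plan is to derive both statements formally from three inputs: the Syzygy Rank Criterion proved above, K.~Saito's theorem that an \emph{isolated} hypersurface singularity is strongly Euler homogeneous if and only if it is quasi-homogeneous, and the numerical fact that $\rk M_f(p)=1$ at an isolated singular point of a projective hypersurface --- the $X=\PP^n$ case of the identity $\rk M_f(p)=\rk\mathrm{Pic}(X)$ mentioned in the introduction, for which I also record a direct proof below. Granting these, the argument is immediate. By construction the matrix $M_f$ has the Euler relation $(x_0,\dots,x_n)$ as one column and the columns of $M'_f$ as the others, so for $\lambda\neq 0$ the scalar matrix $M_f(\lambda p)$ is $M'_f(\lambda p)$ with the nonzero column $\lambda(p_0,\dots,p_n)^{\mathsf T}$ adjoined; hence $\rk M'_f(p)\le\rk M_f(p)=1$, i.e.\ $\rk M'_f(p)\in\{0,1\}$. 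The Syzygy Rank Criterion says $\rk M'_f(p)=\rk M_f(p)$ precisely when $D$ is strongly Euler homogeneous at $p$, and since $p$ is isolated, Saito's theorem upgrades this to quasi-homogeneity at $p$. Therefore $\rk M'_f(p)=1$ if and only if $D$ is quasi-homogeneous at $p$, which is (1); statement (2) is the complementary case of the dichotomy $\rk M'_f(p)\in\{0,1\}$.

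For the one nontrivial input, $\rk M_f(p)=1$, I would argue with logarithmic vector fields on the affine cone. The resolution defining $M_f$ identifies the column span of $M_f$ in $R^{n+1}$ with $\mathrm{Der}(-\log\hat D)=\{\theta=\sum_i a_i\partial_{x_i}:\theta(f)\in(f)\}$, so $\rk M_f(\lambda p)$ is the dimension of the span of the vectors $\theta(\lambda p)$ as $\theta$ ranges over this module. A short computation shows every such $\theta$ preserves the Jacobian ideal: writing $\theta(f)=gf$ one gets $\theta(f_{x_i})=(\partial_{x_i}g)f+gf_{x_i}-\sum_j(\partial_{x_i}a_j)f_{x_j}\in (f)+J_f=J_f$. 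In characteristic zero a derivation stabilizing $J_f$ stabilizes $\sqrt{J_f}$, hence $\theta$ is tangent to the reduced locus $V(J_f)_{\mathrm{red}}$, the reduced singular locus of $\hat D$. Because $p$ is isolated in $\mathrm{Sing}(D)$, near the point $q:=\lambda p$ (with $\lambda\neq 0$, so $q\neq 0$) this reduced locus is exactly the smooth line $\CC p$, so $\theta(q)\in T_q(\CC p)=\CC p$ for every such $\theta$; hence $\rk M_f(q)\le 1$. The reverse inequality holds because the Euler field $\sum_i x_i\partial_{x_i}$ lies in $\mathrm{Der}(-\log\hat D)$ and evaluates to $q\neq 0$. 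Thus $\rk M_f(p)=1$.

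I expect the main obstacle to be the step from "$\theta$ preserves $J_f$'' to "$\theta(q)$ lies in a one-dimensional subspace''. The scheme $V(J_f)$ can be badly nonreduced along the line $\CC p$ --- its Zariski tangent space at $q$ may be all of $\CC^{n+1}$ --- so the constraint cannot be read off the given generators of $J_f$; the remedy is to pass to the radical, using the characteristic-zero fact that a derivation stabilizing an ideal stabilizes its associated primes, together with the observation that the isolatedness of $p$ in $\mathrm{Sing}(D)$ makes $V(J_f)_{\mathrm{red}}$, locally near $q$, a single line through the origin which is smooth at $q$. The remaining points are routine: well-definedness of $\rk M_f(p)$ and $\rk M'_f(p)$ under rescaling $p\mapsto\lambda p$ is recorded in the excerpt, and one needs only the bookkeeping that the columns of $M_f$ are those of $M'_f$ together with the Euler relation in order to get $\rk M'_f(p)\le\rk M_f(p)$.
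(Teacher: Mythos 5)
Your proof is correct, and its logical skeleton is the same as the paper's: observe $\rk M'_f(p)\le \rk M_f(p)=1$, apply the Syzygy Rank Criterion to identify the case $\rk M'_f(p)=1$ with strong Euler homogeneity at $p$, and invoke Saito's theorem on isolated singularities to upgrade this to quasi-homogeneity; part (2) is then the complementary case. Where you genuinely diverge is in the justification of the one nontrivial input $\rk M_f(p)=1$. The paper deduces it from the lemma $\rk M_f(p)=\rk j(p)+1$ together with the fact that $\rk j(p)$ is the dimension of the logarithmic stratum through $p$ in Saito's logarithmic stratification, which for an isolated singularity is the point itself, so $\rk j(p)=0$. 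You instead work on the affine cone: every $\theta\in\textup{Der}_{\CC^{n+1}}(-\log\hat D)$ stabilizes $J_f$ (your displayed computation, using $f\in J_f$), hence by Seidenberg's theorem in characteristic zero stabilizes $\sqrt{J_f}$, hence is tangent to $V(J_f)_{\mathrm{red}}$, which near $q=\lambda p$ is the smooth line $\CC p$; so all columns of $M_f(q)$ lie in the line $\CC p$ and the Euler column realizes rank exactly $1$. The two arguments encode the same geometric fact --- logarithmic vector fields are tangent to the singular locus --- but yours is a self-contained piece of commutative algebra (and correctly flags why one must pass to the radical, since $V(J_f)$ may be nonreduced along $\CC p$), whereas the paper's is a one-line appeal to Saito's stratification theory, which also handles the smooth-point case of its fuller statement uniformly. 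Both are valid; yours trades a citation to \cite{MR586450} for one to Seidenberg.
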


 For planar curves in $\PP^2$, the matrix $M_f$ defines a subscheme $Z_f$ of $\mathbb{P}^2\times \mathbb{P}^2$ (see \S\ref{section:SEHandlogCC}). It is proved in \cite[Theorem 4.7]{ABDM25} (see also Proposition \ref{prop:ABDMrestate}) that the quasi-homogeneity of a curve in $\mathbb{P}^2$ is equivalent to the irreducibility of $Z_f$. We point out in \S\ref{section:SEHandlogCC} that this is not an exclusive phenomenon for curves in $\mathbb{P}^2$, but it follows from some general results (see Corollary~\ref{coro; CC}) relating the strong Euler homogeneity of a reduced divisor $D$ in $\PP^n$ to properties of log characteristic cycles.  
 Throughout \S\ref{section:SEHandlogCC} we assume that the reader is familiar with the basic calculus of constructible functions and characteristic cycles.

\vspace{0.5cm}
\noindent
{\bf Acknowledgements}: 
The second author would like to thank Botong Wang, Zhenjian Wang and Zhixian Zhu for helpful  discussions. 
Xia Liao is supported by Chinese National Science Foundation of China (Grant No.11901214).
Xiping Zhang is supported by National Natural Science Foundation of China (Grant No.12201463).

\section{Logarithmic Derivations and the (augmented) Logarithmic Rank}
\label{sec; logrank}

Let $X$ be a complex manifold of dimension $n$ and $L$ be a line bundle on $X$. 
Let  $f\in H^0(X,L)$ be a non-trivial global section and $D=V(f)$ be a reduced hypersurface cut by $f$. Denote the sheaf of sections of $L$ by $\mathcal{L}$.

\begin{defi}
\label{defn; logDer}
The sheaf of logarithmic derivations along $D$, denoted by $\textup{Der}_X(-\log D)$,  is locally given by 
\begin{equation*}
\textup{Der}_X(-\log D)(U)=\{\chi \in \textup{Der}_X(U) \ | \  \chi(h) \in h\mathscr{O}_U\}
\end{equation*}
for any open subset $U\subset X$ and any local representative  $h$ of $f$ on $U$.   
\end{defi} 
This sheaf was  introduced  by Saito  in \cite{MR586450},  where he proved that it is a coherent and reflexive $\sO_X$-module, and has generic  rank $n$ outside $D$.

Let $\cI_D$ be the ideal sheaf of $D$ and 
$\cJ_D$ be the ideal sheaf of the singular subscheme of $D$. Clearly $\cI_D\subset \cJ_D$ and let the $\sO_X$-module  
$\cR_D:=\cJ_D/\cI_D$ be the quotient of the ideal sheaves. 
By definition of $\textup{Der}_X(-\log D)$, we have the following short exact sequence
\begin{equation}
\label{seq; sesq1global}
\begin{tikzcd}
0 \arrow[r] & \textup{Der}_X(-\log D)\otimes \mathcal{L}^\vee \arrow[r, " "] & \textup{Der}_X \arrow[r, "\rho"]\otimes \mathcal{L}^\vee & \cR_D  \arrow[r]  & 0.
\end{tikzcd}
\end{equation}

Next, we introduce another related short exact sequence. 
Let $P_X^1L$ be the bundle of principal parts of $L$. It is a rank $n+1$ vector bundle on $X$ associated to a locally free sheaf $\mathcal{P}^1_X\mathcal{L}$ whose precise definition can be found in \cite[\S 4]{Atiyah57} and \cite[Section 16]{EGAIV-4}. On any open subset $U$ where both $L$ and $T^*X$ are trivialized, we have the following concrete description of $\mathcal{P}^1_X\mathcal{L}$: 
\[
\Gamma(U, \mathcal{P}^1_X\mathcal{L})  = \left\lbrace j^1(g):=(g_{x_1}, g_{x_2}, \cdots , g_{x_n}, g)\in \mathscr{O}_{U}^{n+1}| g\in \mathscr{O}_U \cong \Gamma(U, L) \right\rbrace.
\]   
The global section $j^1(f) \in H^0(X,P^1_XL)$ corresponds to a morphism of sheaves  $\rho'\colon \left(\mathcal{P}^1_X\mathcal{L} \right)^\vee \to \sO_X$.  A local computation shows that the image of $\rho'$ is  $\cJ_D\otimes \mathcal{L}$ and the kernel is the inclusion  $\textup{Der}_X(-\log D)\otimes L^\vee \hookrightarrow (\mathcal{P}^1_X\mathcal{L})^\vee$ (cf. \cite[\S 3]{LZ24-1} for a dual description of this inclusion). We then have the following short exact sequence 
\begin{equation}
\label{seq; sesq2global}
\begin{tikzcd}
0 \arrow[r] & \textup{Der}_X(-\log D)\otimes L^\vee \arrow[r, " "] & (\mathcal{P}_X^1\mathcal{L})^\vee \arrow[r, "\rho'"] & \cJ_D \arrow[r]  & 0.
\end{tikzcd}
\end{equation}

Given  any  surjective morphism $q: \sP_1 \to \textup{Der}_X(-\log D)\otimes \mathcal{L}^\vee$ from a coherent locally free $\sO_X$-module $\sP_1$, we obtain two exact sequences of $\sO_X$-modules.
\begin{equation}
\label{seq; j}
\begin{tikzcd}
\sP_1 \arrow[r, "j^\vee"]  &  \textup{Der}_X \otimes \mathcal{L}^\vee \arrow[r, "\rho"] & \cR_D \arrow[r]  & 0  \/.
\end{tikzcd}  
\end{equation}
\begin{equation}
\label{seq; j'}
\begin{tikzcd}
\sP_1 \arrow[r, "j'^\vee"]  &  (\mathcal{P}_X^1\mathcal{L})^\vee\arrow[r, "\rho'"] & \cJ_D \arrow[r]  & 0.  
\end{tikzcd}
\end{equation}

The superscript $\vee$ is put here for $j$ and $j'$ to conform with notations in \cite{LZ24-1}. Let $E=\textup{Spec}\left(\textup{Sym}^\bullet \sP_1 \right)$ be the vector bundle whose sheaf of sections is $\sP_1^\vee$. Evaluating at any $p\in D$ we obtain linear transformations
\[
j(p)\colon  T_p^*X\otimes L_p \to  E_p \ \text{ and } \  j'(p)\colon (P_X^1L)_p \to E_p.
\]

It is clear that the ranks of the linear maps $j(p)$ and $j'(p)$ are independent of all choices. 
\begin{defi}
\label{defi; logrank}
We call the  germs  $j_p$ and $j'_p$   the logarithmic morphism and the augmented logarithmic morphism at $p$ respectively. We also call $\rk j(p)$ and $\rk j'(p)$  the logarithmic rank and the augmented logarithmic rank of $D$ at $p$ respectively.
\end{defi}

With the additional geometric insights explained in \cite[\S 3]{LZ24-1}, Rodr\'{i}guez's result \cite[Proposition 2.6]{Rodriguez25} can be interpreted in the following form. 
\begin{prop} 
\label{prop; logrankSEH}
The hypersurface $D$ is strongly Euler homogeneous at $p$ if and only if 
\[
\rk j'(p) =\rk j(p) +1 \/.
\]
\end{prop}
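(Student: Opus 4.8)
The plan is to read off both $\rk j(p)$ and $\rk j'(p)$ as dimensions of images of one and the same vector space, namely the fiber $V_0:=(\textup{Der}_X(-\log D)\otimes\mathcal L^\vee)_p$, under the two fiber maps produced by \eqref{seq; sesq1global} and \eqref{seq; sesq2global}; to compare these dimensions by an elementary argument; and then to identify the comparison with the condition of Definition~\ref{defn; SEH} by a local computation. (The statement is the projective counterpart of \cite[Proposition 2.6]{Rodriguez25}, so no deep new idea is required.) First I would note that, since $\rk j(p)$ and $\rk j'(p)$ do not depend on the choice of $\sP_1$ and $q$, and since passing to the fiber at $p$ is right exact, $q$ induces a surjection $(\sP_1)_p\twoheadrightarrow V_0$; consequently $\rk j'(p)$ is the dimension of the image of $V_0$ under the fiber at $p$ of the inclusion $\textup{Der}_X(-\log D)\otimes\mathcal L^\vee\hookrightarrow(\mathcal P^1_X\mathcal L)^\vee$ of \eqref{seq; sesq2global}, and $\rk j(p)$ is the dimension of the image of $V_0$ under the fiber of the inclusion $\textup{Der}_X(-\log D)\otimes\mathcal L^\vee\hookrightarrow\textup{Der}_X\otimes\mathcal L^\vee$ of \eqref{seq; sesq1global}. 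The structural fact linking the two is that these inclusions are compatible with the canonical surjection $\pi\colon(\mathcal P^1_X\mathcal L)^\vee\twoheadrightarrow\textup{Der}_X\otimes\mathcal L^\vee$ dual to $0\to\Omega^1_X\otimes\mathcal L\to\mathcal P^1_X\mathcal L\to\mathcal L\to0$; that is, the composite $\textup{Der}_X(-\log D)\otimes\mathcal L^\vee\hookrightarrow(\mathcal P^1_X\mathcal L)^\vee\xrightarrow{\pi}\textup{Der}_X\otimes\mathcal L^\vee$ is exactly the inclusion of \eqref{seq; sesq1global}. Writing $W:=((\mathcal P^1_X\mathcal L)^\vee)_p$, $\beta\colon W\to(\textup{Der}_X\otimes\mathcal L^\vee)_p$ for the fiber of $\pi$ (a surjection with one-dimensional kernel $K\cong(\mathcal L^\vee)_p$), and $V\subseteq W$ for the image of $V_0$, this gives $\rk j'(p)=\dim V$ and $\rk j(p)=\dim\beta(V)$.

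Now the rank-nullity theorem applied to $\beta|_V\colon V\to(\textup{Der}_X\otimes\mathcal L^\vee)_p$ yields $\dim V-\dim\beta(V)=\dim(V\cap K)$, and this is $0$ or $1$ because $\dim K=1$. Hence $\rk j'(p)$ equals either $\rk j(p)$ or $\rk j(p)+1$, and the latter case occurs precisely when $K\subseteq V$. So it remains to show that $K\subseteq V$ is equivalent to $D$ being strongly Euler homogeneous at $p$.

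For this I would pass to local coordinates $x_1,\dots,x_n$ centred at $p$ and a local equation $h$ of $D$. In such a chart $(\mathcal P^1_X\mathcal L)^\vee\cong\sO_U^{\,n+1}$, the map $\rho'$ of \eqref{seq; sesq2global} sends a tuple $(b_1,\dots,b_{n+1})$ (coordinates in the dual basis) to $\sum_{i=1}^n b_ih_{x_i}+b_{n+1}h$, so $\textup{Der}_X(-\log D)\otimes\mathcal L^\vee=\ker\rho'$ is generated by the tuples $(a_1,\dots,a_n,-c)$ with $\sum_i a_ih_{x_i}=ch$, that is, by the derivations $\chi=\sum_i a_i\partial_{x_i}$ with $\chi(h)=ch$; moreover $\beta$ is the projection onto the first $n$ coordinates and $K=\CC\cdot(0,\dots,0,1)$. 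Therefore $K\subseteq V$ holds if and only if some local section of $\ker\rho'$ has its first $n$ coordinates vanishing at $p$ and its last coordinate nonzero at $p$, i.e.\ there is a germ $\chi$ with $\chi(h)=ch$, $c(p)\ne0$, vanishing at $p$; dividing such $\chi$ by the unit $c$ gives a germ $\chi'$ with $\chi'(h)=h$ vanishing at $p$, i.e.\ a relation $h=\sum_i a_i'h_{x_i}$ with $a_i'\in\mathfrak m_p$, which is exactly the condition of Definition~\ref{defn; SEH}; conversely such a relation produces the section $(-a_1',\dots,-a_n',1)$ of $\ker\rho'$ witnessing $K\subseteq V$.

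The two places that will actually need care are the following. First, the compatibility asserted above---that $\pi$ carries the embedding of \eqref{seq; sesq2global} onto that of \eqref{seq; sesq1global}---is what lets both ranks be read off from the single space $V_0$; it is a direct unwinding of the local descriptions of $\mathcal P^1_X\mathcal L$ and of $j^1(f)$ recalled in \S\ref{sec; logrank}, but it should be recorded explicitly. Second, one must use that the condition in Definition~\ref{defn; SEH} is independent of the choice of coordinates---equivalently, that strong Euler homogeneity of $D$ at $p$ means the existence of a germ of holomorphic vector field vanishing at $p$ that sends $h$ to a unit multiple of $h$---so that the section constructed above genuinely detects membership in the intrinsic fiber $V$. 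No step requires injectivity of a fiber map, which is important because $\textup{Der}_X(-\log D)$, $\mathcal J_D$ and $\mathcal R_D$ fail to be locally free precisely along the singular locus of $D$.
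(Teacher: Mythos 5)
Your proof is correct, and it follows essentially the same route that the paper takes: the paper does not write out an argument but delegates it to Rodr\'{i}guez's \cite[Proposition 2.6]{Rodriguez25}, and the local-coordinate remark following the proposition (the matrices in \eqref{eq; matrixrepnofj}, where $j'^\vee_p$ is $j^\vee_p$ augmented by the single row $(g_1,\dots,g_m)$ with $\beta_k(h)=g_kh$) encodes exactly the comparison you carry out via $\dim(V\cap K)\in\{0,1\}$ and the identification of $K\subseteq V$ with the existence of a logarithmic vector field vanishing at $p$ whose cofactor is a unit. Your explicit observations --- the compatibility of the two embeddings with the projection $(\mathcal P^1_X\mathcal L)^\vee\twoheadrightarrow\textup{Der}_X\otimes\mathcal L^\vee$, and the fact that no fiberwise injectivity is needed --- are precisely the details the paper leaves to the references.
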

 
Now we give a description of the above constructions in local coordinates. Suppose $\cU$ is a small enough analytic neighborhood of $p$. Let  $\underline{x}=(x_1, \cdots ,x_n)$ be the  local coordinates in $\cU$ and 
$h\in \sO_\cU$ be a local defining equation of $D\cap \cU$. Then we have
\[
\cI_D(\cU)=(h)\subset \cJ_D(\cU)=(h_{x_1}, h_{x_2}, \cdots ,h_{x_n}, h)\subset  \sO_\cU \ \text{ and } \ \cR_D(\cU)=\cJ_D(\cU)/(h) \/.
\]
The morphism $\rho$ in sequence~\eqref{seq; sesq1global} and the morphism $\rho'$ in Sequence~\eqref{seq; sesq2global} are given by 
\begin{align*}
\rho_\cU \colon  \textup{Der}_\cU\cong \sO_\cU^{\oplus n}\to \cJ_D(\cU)/(h)
\/, & \quad (a_1, \cdots ,a_n)\mapsto \sum_{i=1}^n a_i \overline{h}_{x_i}  \/,\\  
\rho'_\cU\colon (\cP_X^1\cL)^\vee\cong\sO_\cU^{\oplus n+1}\to \cJ_D(\cU)
\/, & \quad (a_1, \cdots ,a_n, b)\mapsto \sum_{i=1}^n a_i h_{x_i} +bh \/.
\end{align*}
It's straightforward to verify that 
$
\textup{Der}_\cU(-\log D)=\ker \rho_\cU=\ker \rho'_\cU \/.
$

We also identify $\sP_1|_\cU$ with $\sO_\cU^{\oplus m}=\oplus_{i=1}^m \sO_\cU\cdot e_i$ and define 
\[
\cB:=\left\lbrace \beta_k:=q(e_k)= \sum_{i=1}^n \beta_{k,i}\partial_{x_i}  \in \textup{Der}_\cU(-\log D)\/; \quad k=1,2, \cdots ,m \right\rbrace \/.
\]
Since $q$ is surjective, $\cB$ forms  a generating set of $\textup{Der}_{\cU}(-\log D)$. Under this generating set we may write $j^\vee_p$ and $j'^\vee_p$ by 
 \begin{equation}
\tag{$\dagger$}\label{eq; matrixrepnofj}
j^\vee_p= \begin{bmatrix}
\beta_{1,1} & \beta_{2,1} &\cdots  & \beta_{m,1} \\
\beta_{1,2} & \beta_{2,2} &\cdots  & \beta_{m,2} \\
\vdots & \vdots & \cdots & \vdots \\
\beta_{1,n} & \beta_{s,2} &\cdots  & \beta_{m,n}  
\end{bmatrix}
\ \text{ and }\ 
j'^\vee_p= \begin{bmatrix}
\beta_{1,1} & \beta_{2,1} &\cdots  & \beta_{m,1} \\
\beta_{1,2} & \beta_{2,2} &\cdots  & \beta_{m,2} \\
\vdots & \vdots & \cdots & \vdots \\
\beta_{1,n} & \beta_{s,2} &\cdots  & \beta_{m,n}  \\
g_1 & g_2 & \cdots & g_m
\end{bmatrix} \/,
\end{equation}
where $\{g_1,\ldots,g_m\}\subset \sO_\cU$ are given by $\beta_k(h)=g_kh$.  

We finish this section with the following decomposition property:

\begin{prop}
\label{prop; decompofP1L}
Let $X=\PP^n$ be the projective space and $\mathcal{L}=\sO_{\PP^n}(k)$ for some $k\geq 1$. Then the bundle of principal parts splits.
\[
\cP^1_{\PP^n}\sO_{\PP^n}(k) \cong \sO_{\PP^n}(k-1)^{n+1} \/.
\] 
\end{prop}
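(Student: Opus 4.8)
The plan is to realize $\cP^1_{\PP^n}\sO_{\PP^n}(k)$ as the middle term of a short exact sequence that, up to isomorphism, coincides with the Euler sequence twisted by $\sO_{\PP^n}(k)$. The basic tool is the canonical jet (Atiyah) short exact sequence of $\sO_X$-modules, available for every smooth variety $X$ and line bundle $\cL$,
\[
0 \longrightarrow \Omega^1_X \otimes \cL \longrightarrow \cP^1_X\cL \longrightarrow \cL \longrightarrow 0,
\]
whose surjection is the evaluation map: in the local description of Section~\ref{sec; logrank} it sends $(g_{x_1},\dots,g_{x_n},g)$ to $g$, so the inclusion $\Omega^1_X\otimes\cL\hookrightarrow\cP^1_X\cL$ is the obvious one and the conventions match those surrounding \eqref{seq; sesq2global}. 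Specializing to $X=\PP^n$ and $\cL=\sO_{\PP^n}(k)$ gives
\[
0 \longrightarrow \Omega^1_{\PP^n}(k) \longrightarrow \cP^1_{\PP^n}\sO_{\PP^n}(k) \longrightarrow \sO_{\PP^n}(k) \longrightarrow 0,
\]
while twisting the Euler sequence $0\to\Omega^1_{\PP^n}\to\sO_{\PP^n}(-1)^{n+1}\to\sO_{\PP^n}\to 0$ by $\sO_{\PP^n}(k)$ gives
\[
0 \longrightarrow \Omega^1_{\PP^n}(k) \longrightarrow \sO_{\PP^n}(k-1)^{n+1} \longrightarrow \sO_{\PP^n}(k) \longrightarrow 0.
\]
Both are extensions of $\sO_{\PP^n}(k)$ by $\Omega^1_{\PP^n}(k)$, hence are classified by elements of $\operatorname{Ext}^1_{\sO_{\PP^n}}(\sO_{\PP^n}(k),\Omega^1_{\PP^n}(k))\cong H^1(\PP^n,\Omega^1_{\PP^n})\cong\CC$, where the last isomorphism uses $h^{1,1}(\PP^n)=1$.

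The next step is to check that both extension classes are nonzero. For the twisted Euler sequence this is immediate: a splitting would, after twisting back by $\sO_{\PP^n}(-k)$, split the Euler sequence and exhibit $\Omega^1_{\PP^n}$ as a direct sum of line bundles, which is false for every $n\ge 1$. For the jet sequence, the extension class is the Atiyah class of $\sO_{\PP^n}(k)$, which under $H^1(\PP^n,\Omega^1_{\PP^n})\hookrightarrow H^2_{\mathrm{dR}}(\PP^n)$ equals $c_1(\sO_{\PP^n}(k))=k\,c_1(\sO_{\PP^n}(1))$; since $c_1(\sO_{\PP^n}(1))$ generates $H^1(\PP^n,\Omega^1_{\PP^n})$ and $k\ge 1$, this class does not vanish. (Equivalently, one argues that $\sO_{\PP^n}(k)$ carries no holomorphic connection for $k\neq 0$, by restriction to a line.)

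To conclude, observe that two nonzero vectors of the one-dimensional space $H^1(\PP^n,\Omega^1_{\PP^n})$ are proportional by a scalar $\lambda\in\CC^{\ast}$, so the class of the jet sequence is $\lambda$ times that of the twisted Euler sequence. Rescaling the surjection onto $\sO_{\PP^n}(k)$ by $\lambda$ — equivalently, forming the pullback of one sequence along the automorphism $\lambda\cdot\mathrm{id}$ of $\sO_{\PP^n}(k)$ — produces an isomorphism between the two short exact sequences, and in particular a sheaf isomorphism $\cP^1_{\PP^n}\sO_{\PP^n}(k)\cong\sO_{\PP^n}(k-1)^{n+1}$. As a consistency check one can note that the Chern classes agree automatically: from the two sequences $c(\cP^1_{\PP^n}\sO_{\PP^n}(k))=c(\Omega^1_{\PP^n}(k))\,c(\sO_{\PP^n}(k))=(1+(k-1)H)^{n+1}$ with $H$ the hyperplane class.

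The only input that is not pure bookkeeping is the nonvanishing of the jet extension class, i.e. the identification of the Atiyah class of $\sO_{\PP^n}(k)$ with $c_1$; this is where I expect to spend the most care, together with keeping the $\sO_{\PP^n}(k)$-twist straight throughout. It is also worth recording that the hypothesis $k\ge 1$ is genuinely used: for $k=0$ the jet sequence splits while the twisted Euler sequence does not, so the asserted isomorphism fails there.
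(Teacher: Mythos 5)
Your argument is correct, but it takes a genuinely different route from the paper's. You place the two short exact sequences with kernel $\Omega^1_{\PP^n}(k)$ and quotient $\sO_{\PP^n}(k)$ --- the jet/Atiyah sequence for $\cP^1_{\PP^n}\sO_{\PP^n}(k)$ and the twisted Euler sequence --- inside the one-dimensional space $\operatorname{Ext}^1(\sO_{\PP^n}(k),\Omega^1_{\PP^n}(k))\cong H^1(\PP^n,\Omega^1_{\PP^n})$, check both classes are nonzero, and conclude that the middle terms become isomorphic after rescaling; this is a clean, standard argument and the pullback-along-$\lambda\cdot\mathrm{id}$ step is handled correctly. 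The paper instead passes to the isomorphic bundle $\Omega^1_L(\log\PP^n)\vert_{\PP^n}\cong\cP^1_{\PP^n}\cL\otimes\cL^\vee$ and exhibits $n+1$ explicit, pointwise independent global sections $r_i\otimes x_i$ of its twist by $\sO(1)$, built from the logarithmic forms $\frac{1}{k}\frac{dt_i}{t_i}$. Your route is lighter on computation but relies on two standard external inputs ($h^1(\Omega^1_{\PP^n})=1$ and the identification of the Atiyah class of $\sO(k)$ with $k\,c_1(\sO(1))$), while the paper's explicit trivialization is reused later: the proof of Lemma~\ref{lemma; SfZfinPn} identifies the incidence variety $I$ with $\mathbb{P}(T^*\PP^n\otimes L)$ by manipulating exactly these sections, something the abstract isomorphism alone would not provide. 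One minor slip: your justification for the nonsplitting of the Euler sequence (``$\Omega^1_{\PP^n}$ is not a direct sum of line bundles'') fails as stated for $n=1$, where $\Omega^1_{\PP^1}\cong\sO(-2)$ is itself a line bundle; the nonsplitting still holds there (compare $h^0(\sO(-1)^2)=0$ with $h^0(\sO(-2)\oplus\sO)=1$), or, more uniformly, note that the class of the Euler sequence is $c_1(\sO(1))\neq 0$, which disposes of both nonvanishing claims at once.
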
 

This isomorphism is explained in \cite[p37]{MR1420708} as a part of a commutative diagram involving bundles of principal parts on smooth toric varieties. We will provide an alternative proof below.

\begin{proof}
Recall that we always have $\mathcal{P}^1_X\mathcal{L}\otimes\mathcal{L}^\vee \cong \Omega^1_L(\log X)\vert_X$ by \cite[Corollary 3.18]{LZ24-1} where $X$ is any complex manifold regarded as the zero section of $L$. So in the case that $\mathcal{L}\cong \mathscr{O}_{\mathbb{P}^n}(k)$, we only need to show $\Omega^1_L(\log \mathbb{P}^n)  \vert_{\mathbb{P}^n} \cong \mathscr{O}_{\mathbb{P}^n}(-1)^{n+1}$.

Let $s_i=x_i^k\in H^0(\mathbb{P}^n,\mathscr{O}(k))$. So $s_i$ trivializes $L$ on the standard open set $U_i$. The relation between $s_i$ and $s_j$ on $U_i\cap U_j$ is $s_i\cdot (\frac{x_j}{x_i})^k=s_j$. The section $s_i$ determines a coordinate function $t_i \in \mathscr{O}(U_i)$ for the fibres of $L$ on $U_i$. The relation between $t_i$ and $t_j$ on $U_i\cap U_j$ is $t_i \cdot (\frac{x_i}{x_j})^k=t_j$. From here we deduce that 
\[
\frac{1}{k}\frac{dt_j}{t_j}-\frac{d(\frac{x_i}{x_j})}{\frac{x_i}{x_j}}=\frac{1}{k}\frac{dt_i}{t_i}
\]
on $U_i\cap U_j$.

Note that $\frac{1}{k}\frac{dt_i}{t_i}$ is extended to a global rational section of $\Omega^1_L(\log \mathbb{P}^n)  \vert_{\mathbb{P}^n}$. Let's name this rational section by $r_i$. The equation above shows that $r_0\otimes x_0,\ldots,r_n\otimes x_n$ are nonvanishing global section of $\Omega^1_L(\log \mathbb{P}^n)  \vert_{\mathbb{P}^n} \otimes \mathscr{O}(1)$. For example, let's examine $r_i\otimes x_i$. On $U_i$, it is trivialized into $\frac{1}{k}\frac{dt_i}{t_i}$ so it is regular and nonvanishing. On $U_j$, it is trivialized into 
\[
(\frac{1}{k}\frac{dt_j}{t_j}-\frac{d(\frac{x_i}{x_j})}{\frac{x_i}{x_j}})\otimes \frac{x_i}{x_j} = \frac{1}{k}\frac{dt_j}{t_j}\otimes \frac{x_i}{x_j} - d(\frac{x_i}{x_j})\otimes 1
\]
which is also regular and nonvanishing because $d(\frac{x_i}{x_j})$ appears. It is clear that $r_0\otimes x_0, \ldots, r_n\otimes x_n$ are pointwise linearly independent, hence $\Omega^1_L(\log \mathbb{P}^n)  \vert_{\mathbb{P}^n} \otimes \mathscr{O}(1) \cong \mathscr{O}^{n+1}$.
\end{proof}

\section{Syzygy Rank of Projective Hypersurfaces} 
\label{sec; syzygyrank}
In  this section we focus on reduced   projective hypersurfaces. Let $R=\CC[x_0, x_1, \cdots ,x_n]=\bigoplus_{j\in \ZZ} R_j$ be the polynomial ring with the standard grading: $R_j$ 
denotes the vector space of all homogeneous polynomial of degree $j$. 

A reduced degree $d$ hypersurface $D\subset \PP^n$ is cut by a homogeneous polynomial $f\in R_d$. We denote by $\hat{D}$ the affine cone of $D$ in $\CC^{n+1}$. 
Let $J_f=(f_{x_0}, f_{x_1}, \cdots ,f_{x_n})$ be the global Jacobian ideal of $f$ and let $K$ be the kernel of $Df: R^{n+1} \to J_f(d-1)$ sending the ith generator to $f_{x_i}$. Suppose a mimimal generating set of $K$ has $m$ relations, then we have an exact sequence
\begin{equation}
\label{seq; M_f}
\begin{tikzcd}
\bigoplus_{i=1}^m R(-d_i) \arrow[r, "M'_f"] & R^{n+1} \arrow[r,"Df"] & J_f(d-1) \arrow[r,] & 0.
\end{tikzcd}
\end{equation}

The graded map $M'_f$ is represented by a  matrix with entries in $R$
\begin{equation}
\tag{$\ddagger$}\label{eq; presentationofM_f}
M'_f=
\begin{bmatrix}
\delta_{1,0} & \delta_{2,0} & \cdots & \delta_{m,0} \\
\vdots & \vdots & \cdots & \vdots \\
\delta_{1,n} & \delta_{2,n} & \cdots & \delta_{m,n}  
\end{bmatrix}
\end{equation}
where each column gives a homogeneous annihilating relation of $J_f$, i.e., 
\[
\sum_{i=0}^n \delta_{k,i} f_{x_i}  =0  \text{ and } 
\{\delta_{k,0}, \delta_{k,1}, \cdots , \delta_{k,n}\}\subset  R_{d_k} \/, 
\quad  \forall   k=1,2, \cdots , m  \/.
\]

By construction, the following proposition is clear.
\begin{rema}
The following vector fields   generate the $R$-module $\textup{Der}_{\CC^{n+1}}(-\log \hat{D})$:
\[
\cB_f:=\left\lbrace
\chi=\sum_{i=0}^n x_i\partial_{x_i}, \ \delta_1:=\sum_{i=0}^n \delta_{1,i}\partial_{x_i}, \  \delta_2:=\sum_{i=0}^n \delta_{2,i}\partial_{x_i} ,\  \cdots  \ , \ \delta_m:=\sum_{i=0}^n \delta_{m,i}\partial_{x_i}
\right\rbrace \/.
\]
\end{rema}

\begin{proof}
Let $\delta$ be any derivation satisfying $\delta(f)=f$, then we can split $\delta$ into $\delta=\frac{1}{d}\chi+(\delta-\frac{1}{d}\chi)$. Since the second term annihilates $f$, it is generated by $\delta_1,\ldots,\delta_m$.
\end{proof}

Consider the induced free resolution of $\left(J_f/(f)\right)(d-1)$:
\begin{equation*}
\begin{tikzcd}
\bigoplus_{i=0}^m R(-d_i)  \arrow[r, "M_f"] & 
 R^{n+1} \arrow[r, ""] & \left(J_f/(f)\right)(d-1) \arrow[r, ] & 0
\end{tikzcd}
\end{equation*}
where $d_0=1$ and  $R(-d_0)$ represents the relation $\sum_{i=0}^n x_i f_{x_i} =df$. Then $M_f$ is represented by
$$
M_f= 
\begin{bmatrix}
\delta_{1,0} & \delta_{2,0} & \cdots & \delta_{m,0} & x_0\\
\vdots & \vdots & \cdots & \vdots & \vdots \\
\delta_{1,n} & \delta_{2,n} & \cdots & \delta_{m,n} & x_n
\end{bmatrix} \/.
$$

Now we show that the syzygy rank equals the augmented logarithmic rank. 
\begin{lemm}
\label{lemm; syzygyrank}
For any $p\in D$ we have
$
\rk j'(p)=\rk M'_f(p) \/.
$
\end{lemm}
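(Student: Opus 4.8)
The plan is to compare, at a fixed point $p \in D$, the two linear maps whose ranks define $\rk j'(p)$ and $\rk M'_f(p)$, and show they have the same matrix representative up to harmless identifications. First I would work locally on the projective side: pick an affine chart $U_i \ni p$ (say $x_0 \neq 0$) and dehomogenize $f$ to a local defining equation $h$ of $D \cap U_i$ using the standard trivialization of $\sO_{\PP^n}(d)$. Under Proposition~\ref{prop; decompofP1L}, the bundle $\cP^1_{\PP^n}\sO_{\PP^n}(d)$ splits as $\sO_{\PP^n}(d-1)^{n+1}$, which matches the target $R^{n+1}$ (suitably twisted) of the map $Df$ in sequence~\eqref{seq; M_f}; so both $M'_f$ and $j'$ fit the framework of~\eqref{seq; j'} with $\sP_1$ taken to be (a twist of) $\bigoplus_i R(-d_i)$ and $q$ the surjection onto $\textup{Der}_{\CC^{n+1}}(-\log \hat{D})$ (equivalently, onto $\textup{Der}_{\PP^n}(-\log D) \otimes \sO(d-1)$ after dividing out the Euler field). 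The key point here is that the columns $\delta_k$ of $M'_f$ are precisely generators of the module of logarithmic derivations annihilating $f$, as recorded in the Remark just above the lemma.

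Next I would trace through the matrix representative~\eqref{eq; matrixrepnofj} of $j'^\vee_p$. That display expresses $j'^\vee_p$ in terms of a local generating set $\cB = \{\beta_k\}$ of $\textup{Der}_\cU(-\log D)$ together with the functions $g_k$ defined by $\beta_k(h) = g_k h$. On the projective side the analogous local generating set is obtained by restricting $\cB_f = \{\chi, \delta_1, \dots, \delta_m\}$ to the chart $U_i$ — but the Euler field $\chi$ maps to $d \cdot h$ (it satisfies $\chi(f) = df$, hence $\chi(h) = d h$ locally), i.e. it contributes a nonzero entry in the bottom ("$g$") row of $j'^\vee_p$, while the $\delta_k$ contribute zero bottom entries because they annihilate $f$. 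The upshot is that the matrix $j'^\vee_p$ built from $\cB_f$ has a block of the form: an $n \times (m+1)$ top block consisting of the dehomogenized columns of $M'_f$ together with one column coming from $\chi$, and a bottom row $(0, \dots, 0, d)$. The rank of this augmented matrix is therefore $\rk M'_f(p) + 1$ minus any correction — but here is where I must be slightly careful, since $M_f$ (not $M'_f$) is the matrix with the extra Euler column, and $M'_f$ alone should correspond to $j$ not $j'$. Re-examining: the correct identification is that $M'_f$ corresponds to $j$ (the plain logarithmic morphism on derivations of $\PP^n$, sequence~\eqref{seq; j}) and $M_f$ corresponds to $j'$; but the lemma asserts $\rk j'(p) = \rk M'_f(p)$. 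This is reconciled by the structure of the bundle of principal parts on $\PP^n$: the extra "degree" coordinate in $\cP^1_{\PP^n}\cL$ is, after the splitting of Proposition~\ref{prop; decompofP1L}, not independent but is absorbed, so that $j'$ on $\PP^n$ sees exactly the $n+1$ partials $f_{x_0}, \dots, f_{x_n}$ — which is precisely the target of $M'_f$. So the plan is: use Proposition~\ref{prop; decompofP1L} to identify $(\cP^1_{\PP^n}\cL)^\vee$ with $\sO^{n+1}$, identify $\rho'$ with $Df$, and conclude that $M'_f$ is exactly a matrix representative of $j'^\vee$, whence the ranks agree.

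Concretely, the steps I would carry out are: (1) fix the chart and the trivialization, and spell out how $j^1(f) = (f_{x_0}, \dots, f_{x_n})$ becomes the Jacobian row under the splitting of Proposition~\ref{prop; decompofP1L} — this is where the content is, because it explains why the ambient space is $n+1$-dimensional rather than $n$-dimensional on $\PP^n$; (2) observe that the surjection $q$ from $\bigoplus_{i=1}^m R(-d_i)$ onto $\textup{Der}_{\PP^n}(-\log D)\otimes \sO(d-1)$ in~\eqref{seq; M_f} is exactly a choice of the $\sP_1$ and $q$ appearing in the setup before Definition~\ref{defi; logrank}, using the Remark that the $\delta_k$ generate the logarithmic derivations annihilating $f$; (3) conclude from~\eqref{seq; j'} that $\rho' \circ j'^\vee = (Df)$ as maps $\bigoplus R(-d_i) \to \cJ_D \otimes \cL$, so that $j'^\vee = M'_f$ up to composition with the injection $\cJ_D\otimes\cL \hookrightarrow \sO^{n+1}$ given by the splitting, and hence $\rk j'(p) = \rk M'_f(p)$ for every $p$; (4) note that the rank is independent of $\lambda$ along the cone because everything is homogeneous, matching the convention set up after Definition~\ref{defi:syzygyrank}. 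The main obstacle I anticipate is bookkeeping the twist and the splitting of Proposition~\ref{prop; decompofP1L} correctly so that the "augmented" direction of the bundle of principal parts is accounted for without double-counting: one must be sure that the isomorphism $\cP^1_{\PP^n}\sO(d)\cong \sO(d-1)^{n+1}$ sends $j^1(f)$ to the Jacobian vector $(f_{x_0},\dots,f_{x_n})$ and not to some twisted variant, so that $\rho'$ really is $Df$ and no spurious extra rank creeps in. Everything else is a routine chart computation once that identification is pinned down.
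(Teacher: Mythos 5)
Your proposal is correct and follows essentially the same route as the paper: use Proposition~\ref{prop; decompofP1L} to identify $(\cP^1_{\PP^n}\cL)^\vee$ with $\sO_{\PP^n}(1-d)^{n+1}$ so that $\rho'$ becomes the sheafification of $Df$, take $\sP_1$ to be the sheafified $\bigoplus_i R(-d_i)$ surjecting onto the syzygy module $K$, and conclude that $M'_f$ is a matrix representative of $j'^\vee$; your supplementary chart computation is exactly the Remark the paper places after Lemma~\ref{lemm; augsyzygyrank}. One phrase should be corrected --- the identity you want is ``$\rho'$ corresponds to $Df$ under the splitting,'' not ``$\rho'\circ j'^\vee = Df$'' (that composite vanishes by exactness of \eqref{seq; j'}) --- but this is a slip of notation, not of substance.
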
 
\begin{proof}
Recall that (see \cite[Chapter 5]{MR1810311}) the category $QCoh(\PP^n)$ of coherent sheaves on $\PP^n$ and the category $Gr_fMod(R)$ of finitely generated graded $R$-modules are connected by 
two functors
\[
 \Gamma_*\colon QCoh(\PP^n) \to Gr_fMod(R), \quad  \beta:=\widetilde{(\bullet)}\colon Gr_fMod(R)  \to QCoh(\PP^n)\/,
\]
such that $\beta\circ \Gamma_*$ is an isomorphism and $\beta$ is exact.

Applying $\beta$ to the short exact sequence
\[
\begin{tikzcd}
0 \arrow[r]& K(1-d)  \arrow[r] &  R(1-d)^{n+1} \arrow[r, "Df"] 
&  J_f  \arrow[r] & 0 \/,
\end{tikzcd}
\]
we obtain
\[
\begin{tikzcd}
0 \arrow[r] & \beta(K)(1-d) \arrow[r] & \sO_{\PP^n}^{n+1}(1-d)  \arrow[r] 
& \cJ_D  \arrow[r] & 0.
\end{tikzcd}
\]
By short exact sequence~\eqref{seq; sesq2global} and Proposition~\ref{prop; decompofP1L}, we conclude that 
\[
\beta(K)(1-d)\cong  \textup{Der}_{\PP^n}(-\log D)\otimes \mathcal{L}^\vee.
\]
Applying $\beta$ to the surjection $\bigoplus_{i=1}^m R(-d_i) \to K$ gives rise to a surjective morphism
\[
\begin{tikzcd}
\bigoplus_{i=1}^m \sO_{\PP^n}(1-d-d_i) \arrow[r] & \beta(K)(1-d) \arrow[r] & 0.
\end{tikzcd}
\]
Therefore we can let $\bigoplus_{i=1}^m \sO_{\PP^n}(1-d-d_i)$ be $\mathscr{P}_1$. We deduce that the combination
\[
\begin{tikzcd}
\bigoplus_{i=1}^m \sO_{\PP^n}(1-d-d_i) \arrow[r] & \beta(K)(1-d) \arrow[r] & \sO_{\PP^n}^{n+1}(1-d)
\end{tikzcd}
\]
is $j'^\vee$. In other words, we have proved $j'^\vee=\beta(M'_f(1-d))$.

\end{proof}

\begin{lemm}
\label{lemm; augsyzygyrank}
For any $p\in D$ we have
$
\rk j(p)+1=\rk M_f(p) \/.
$
\end{lemm}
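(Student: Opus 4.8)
The plan is to mirror the proof of Lemma~\ref{lemm; syzygyrank}, replacing the augmented logarithmic morphism $j'^\vee$ with the logarithmic morphism $j^\vee$ and the bundle of principal parts with the tangent bundle. The key is to identify the sheafification of the presentation $M_f$ with a surjective presentation of $\textup{Der}_{\PP^n}(-\log D)\otimes\mathcal{L}^\vee$ followed by its inclusion into $\textup{Der}_{\PP^n}\otimes\mathcal{L}^\vee \cong T\PP^n \otimes \mathcal{L}^\vee$.

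First I would let $\widetilde{K}$ be the kernel of the graded map $R^{n+1} \to \left(J_f/(f)\right)(d-1)$ sending the $i$th generator to the class $\overline{f_{x_i}}$. By construction of $M_f$, the columns of $M_f$ generate $\widetilde{K}(1-d)$, and the free module $\bigoplus_{i=0}^m R(-d_i)$ surjects onto $\widetilde{K}(1-d)$. Sheafifying the short exact sequence
\[
\begin{tikzcd}
0 \arrow[r] & \widetilde{K}(1-d) \arrow[r] & R(1-d)^{n+1} \arrow[r] & \left(J_f/(f)\right)\arrow[r] & 0
\end{tikzcd}
\]
and using that $\beta$ is exact and $\beta\left(\widetilde{(J_f/(f))}\right) \cong \cR_D$ (since $\left(J_f/(f)\right)\tilde{\phantom{x}}$ agrees with $\cJ_D/\cI_D$ up to the twist, the global sections of $\cR_D$ being $J_f/(f)$ up to finite-dimensional pieces), I would match the resulting sequence with the short exact sequence~\eqref{seq; sesq1global}. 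Concretely, $\beta(\widetilde{K})(1-d) \cong \textup{Der}_{\PP^n}(-\log D)\otimes\mathcal{L}^\vee$, exactly as in the previous lemma but now with the Euler relation column included so that the cokernel is $\cR_D$ rather than $\cJ_D\otimes\mathcal{L}$; here I use that $\textup{Der}_{\PP^n}\otimes\sO(1-d) \cong \sO_{\PP^n}^{n+1}(-d)$ comes from the Euler sequence, which accounts for the extra generator and the $+1$.

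Then, setting $\sP_1 = \bigoplus_{i=0}^m \sO_{\PP^n}(1-d-d_i)$, the composite
\[
\begin{tikzcd}
\bigoplus_{i=0}^m \sO_{\PP^n}(1-d-d_i) \arrow[r] & \beta(\widetilde{K})(1-d) \arrow[r] & \textup{Der}_{\PP^n}\otimes\mathcal{L}^\vee
\end{tikzcd}
\]
is precisely $j^\vee$ (for this choice of surjection $q$), i.e. $j^\vee = \beta\!\left(M_f(1-d)\right)$ after identifying $\textup{Der}_{\PP^n}\otimes\sO(1-d)$ with $\sO_{\PP^n}^{n+1}(-d)$ via the Euler sequence. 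Evaluating at $p$, the rank of $j(p)$ equals the rank of the scalar matrix obtained from $M_f$ minus the contribution already built into the Euler sequence identification; unwinding this, $\rk j(p) + 1 = \rk M_f(p)$. The main obstacle is bookkeeping the twist and, more importantly, verifying carefully that the Euler sequence identification $\textup{Der}_{\PP^n}(-\log D)\otimes\mathcal{L}^\vee \hookrightarrow \textup{Der}_{\PP^n}\otimes\mathcal{L}^\vee$ corresponds under $\Gamma_*$ exactly to the augmented presentation with the $x_i$-column — in particular that the Euler vector field $\chi=\sum x_i\partial_{x_i}$, which is the image of the $(m{+}1)$st generator, maps to a nowhere-vanishing section of $T\PP^n(-1)$ (equivalently the tautological generator of the Euler sequence), so that it always contributes exactly $1$ to the rank at every $p$, independent of all choices and of $p$.
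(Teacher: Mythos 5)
Your overall strategy is the same as the paper's (sheafify the augmented presentation and compare it with the logarithmic morphism via the Euler sequence), but several of the identifications you assert are false as stated, and they sit exactly at the point where the $+1$ has to be produced. First, the Euler sequence gives a \emph{surjection} $\sO_{\PP^n}(1)^{n+1}\twoheadrightarrow T\PP^n$ with kernel $\sO_{\PP^n}$ spanned by $\chi=\sum x_i\partial_{x_i}$; it does not give an isomorphism $\textup{Der}_{\PP^n}\otimes\sO(1-d)\cong\sO_{\PP^n}^{n+1}(-d)$ — the ranks are $n$ and $n+1$. Consequently the identity ``$j^\vee=\beta(M_f(1-d))$'' cannot hold: the two maps have targets of different rank. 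For the same reason, your kernel module $\widetilde K=\ker\bigl(R^{n+1}\to (J_f/(f))(d-1)\bigr)$ does \emph{not} sheafify to $\textup{Der}_{\PP^n}(-\log D)\otimes\cL^\vee$; it sheafifies to a rank-$(n+1)$ sheaf sitting in an extension $0\to\sO(-d)\to\beta(\widetilde K)(1-d)\to\textup{Der}_{\PP^n}(-\log D)\otimes\cL^\vee\to 0$, the extra $\sO(-d)$ being the Euler line. Finally, your closing justification of the $+1$ is backwards: $\chi$ does not map to a nowhere-vanishing section of $T\PP^n(-1)$ — its image in $T\PP^n$ is \emph{zero}, since it generates the kernel of the Euler surjection. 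If $\chi$ genuinely landed in $T\PP^n$ nowhere-vanishingly, that would not even give the $+1$, because its image could lie in the span of the images of the other columns.

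The correct mechanism, which is what the paper implements, is to compose $M_f$ with the Euler quotient $\pi\colon R^{n+1}\to N(-1)$ (with $\beta(N)=\textup{Der}_{\PP^n}$) to obtain a map $T_f$ whose sheafification is $j^\vee$; the extra column of $M_f$ dies under $\pi$, so $T_f$ only involves the first $m$ columns. The $+1$ then comes from linear algebra at the point: the last column of $M_f(p)$ is the nonzero vector $p=(p_0,\dots,p_n)$, which both lies in $\textup{im}\,M_f(p)$ and spans $\ker\pi(p)$, so $\rk T_f(p)=\dim\bigl(\textup{im}\,M_f(p)/\CC p\bigr)=\rk M_f(p)-1$. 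Your writeup gestures at this (``contributes exactly $1$ to the rank'') but attributes it to the wrong fact; as written, the chain of claimed isomorphisms would not compile into a proof without this repair.
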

\begin{proof}  
We define the graded $R$-module $N$ by the following short exact sequence 
\[
\begin{tikzcd}
0 \arrow[r]& R:=R\cdot (\sum_{i=0}^n x_i\partial_{x_i}) \arrow[r, " "] &   R(1)^{\oplus n+1}:=\bigoplus_{i=0}^n R\cdot \partial_{x_i}   \arrow[r, "\pi"] 
& N\arrow[r] 
&  0,
\end{tikzcd}
\]
then applying $\beta$ we have  $\beta(N)=\textup{Der}_{\PP^n}$ and   obtain the Euler sequence  on $\PP^n$. 

Quotienting out the middle term of sequence~\eqref{seq; M_f} by the free submodule generated by $(x_0,\ldots,x_n)$, we obtain
\[
\begin{tikzcd}
\bigoplus_{i=1}^m R(-d_i) \arrow[r, "T_f"] & N(-1) \arrow[r, ""] & \left(J_f/(f)\right)(d-1) \arrow[r, ] & 0
\end{tikzcd}
\]
Notice that $\beta \left(J_f/(f)\right) = \cR_D$, then applying $\beta$ to the above sequence we obtain
\[
\begin{tikzcd}
\bigoplus_{i=1}^m \sO_{\PP^n}(1-d_i) \arrow[r, "\beta(T_f(1))"] & \textup{Der}_{\PP^n}  \arrow[r, "\rho"] & \cR_D\otimes \mathcal{L} \arrow[r, ] & 0
\end{tikzcd} \/.
\]
Comparing with sequence~\eqref{seq; j} we then have $\beta(T_f(1-d))=j^\vee$.  

On the other hand, we have the  following commutative diagram
\[
\begin{tikzcd}
\bigoplus_{i=1}^m R(-d_i) \arrow[r, "T_f"] & N(-1) \arrow[r, " "] & \left(J_f/(f)\right)(d-1) \arrow[r, ] & 0 \\
\bigoplus_{i=0}^m R(-d_i)\arrow[u, "\pi'"]  \arrow[r, "M_f"] & R^{\oplus n+1} \arrow[u, "\pi"] \arrow[r, "Df"] &   \left(J_f/(f)\right)(d-1) \arrow[r, ] \arrow[u, "="] & 0   
\end{tikzcd}
\]
where $\pi'$ is the projection to the summands indexed by $1,\ldots,m$. 
This shows that $\rk M_f(p)=\rk T_f(p)+1$.
\end{proof}

\begin{rema}
Lemma~\ref{lemm; syzygyrank} and Lemma~\ref{lemm; augsyzygyrank} can also be seen directly in local coordinates. Without loss of generality  we may assume that $p=[1:0:\cdots : 0]$. Let 
$(y_i =\frac{x_i}{x_0})$ be the local coordinates around $p$ and $\delta_{k,j}$ be the polynomials  in the representation \eqref{eq; presentationofM_f} of $M'_f$. We define polynomials $g(y_1, \cdots ,y_n)$ and $\{\alpha_{k,j}(y_1, \cdots ,y_n)\}$ by 
\[
x_0^d\cdot g(\frac{x_1}{x_0},\cdots , \frac{x_n}{x_0})=f(x_0, \cdots, x_n)\/; \quad  x_0^{d_k}\cdot \alpha_{k,j}(\frac{x_1}{x_0},\cdots , \frac{x_n}{x_0})=\delta_{k,j}(x_0, \cdots, x_n) \/.
\]
We also define a set of local vector fields  
\[
\cB:=\left\lbrace \chi_k:=  \sum_{i=1}^n (\alpha_{k,i}- \alpha_{k,0}y_i)\partial_{y_i} \ \Big\vert k=1,2,\cdots ,m  \right\rbrace   \/.
\]
Now $g$ defines the hypersurface $D$ around $p$. A concrete computation shows that $\chi_k(g)=-d\alpha_{1,0}g$ and $\cB$
forms a local generating set of $\textup{Der}_{\PP^n}(-\log D)$. Then by the representation \eqref{eq; matrixrepnofj} we have:
\[
j^\vee_p  =
\begin{bmatrix}
\alpha_{1,1}- \alpha_{1,0}y_1 & \alpha_{2,1}- \alpha_{2,0}y_1 & \cdots & \alpha_{m,1}- \alpha_{m,0}y_1   \\
\alpha_{1,2}- \alpha_{1,0}y_2 & \alpha_{2,2}- \alpha_{2,0}y_2 & \cdots & \alpha_{m,2}- \alpha_{m,0}y_n   \\
\vdots & \vdots & \cdots  & \vdots  \\
\alpha_{1,n}- \alpha_{1,0}y_n  & \alpha_{2,n}- \alpha_{2,0}y_2  & \cdots & \alpha_{m,n}- \alpha_{m,0}y_n  
\end{bmatrix} \/,\quad 
j'^\vee_p =\begin{bNiceArray}{ccc}[margin]
    & j_p &   \\ 
    \hline 
-d\alpha_{1,0} &  \cdots  & -d\alpha_{m,0} 
\end{bNiceArray}  \/.
\]
Evaluating  at $p=(0,0,\cdots ,0)$  we have
\[
j^\vee(p)=\begin{bmatrix}
\alpha_{1,1}  & \alpha_{2,1}  & \cdots & \alpha_{m,1}   \\
\alpha_{1,2}  & \alpha_{2,2}  & \cdots & \alpha_{m,2}    \\
\vdots & \vdots & \cdots  & \vdots  \\
\alpha_{1,n}  & \alpha_{2,n}  & \cdots & \alpha_{m,n}   
\end{bmatrix}(0) 
\text{ and } 
j'^\vee(p)=\begin{bmatrix}
\alpha_{1,1}  & \alpha_{2,1}  & \cdots & \alpha_{m,1}   \\ 
\vdots & \vdots & \cdots  & \vdots  \\
\alpha_{1,n}  & \alpha_{2,n}  & \cdots & \alpha_{m,n}   \\
-d\alpha_{1,0} & -d\alpha_{2,0} & \cdots & -d\alpha_{m,0}
\end{bmatrix}(0) \/.
\]
Now the matrices $M'_f(p)$ and $M_f(p)$ are given by 
\[
M'_f(p)=\begin{bmatrix}
\alpha_{1,0} & \alpha_{2,0}   & \cdots & \alpha_{m,0}   \\
\alpha_{1,1} & \alpha_{2,1}    & \cdots & \alpha_{m,1}    \\
\vdots & \vdots & \cdots  & \vdots  \\
\alpha_{1,n} & \alpha_{2,n}     & \cdots & \alpha_{m,n}   
\end{bmatrix}(0) 
\text{ and } 
M_f(p)= 
\begin{bmatrix}
\alpha_{1,0} & \alpha_{2,0}   & \cdots & \alpha_{m,0} & 1  \\
\alpha_{1,1} & \alpha_{2,1}    & \cdots & \alpha_{m,1}  & 0  \\
\vdots & \vdots & \cdots  & \vdots & \vdots  \\
\alpha_{1,n} & \alpha_{2,n}     & \cdots & \alpha_{m,n}   & 0
\end{bmatrix}(0) \/.
\]
It's immediate that $\rk M'_f(p)=\rk j'(p)$ and $\rk M_f(p)=\rk j(p)+1$. 
\end{rema}

Combining Lemma~\ref{lemm; syzygyrank}, Lemma~\ref{lemm; augsyzygyrank} and
Proposition~\ref{prop; logrankSEH} we obtain the following syzygy rank characterization of strongly Euler homogeneity.
\begin{theo} 
\label{theo; syzygyrankcriterion}
Let $D\subset \PP^n$ be a reduced projective hypersurface cut by $f\in R_d$. 
Then $D$ is strongly Euler homogeneous at point $p\in D$ if and only if 
\[
\rk M'_f(p) =\rk M_f(p)  \/.
\]
\end{theo}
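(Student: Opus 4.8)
The plan is to assemble the theorem from the three ingredients already established in the paper. By Proposition~\ref{prop; logrankSEH}, the divisor $D$ is strongly Euler homogeneous at $p$ precisely when $\rk j'(p) = \rk j(p) + 1$. By Lemma~\ref{lemm; syzygyrank} we may rewrite the left-hand side as $\rk M'_f(p)$, and by Lemma~\ref{lemm; augsyzygyrank} we may rewrite the right-hand side $\rk j(p)+1$ as $\rk M_f(p)$. Substituting both identities turns the criterion of Proposition~\ref{prop; logrankSEH} into the asserted equality $\rk M'_f(p) = \rk M_f(p)$. So the proof is essentially a one-line chain of substitutions, and it could be written out directly.

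The one point that deserves a sentence of care is that Lemma~\ref{lemm; syzygyrank} and Lemma~\ref{lemm; augsyzygyrank} were phrased for the full sheaf-theoretic morphisms $j'^\vee$ and $j^\vee$ (realized as $\beta(M'_f(1-d))$ and as $\beta(T_f(1-d))$ with the extra $+1$ coming from the Euler vector field), whereas Proposition~\ref{prop; logrankSEH} is a pointwise statement about the fibers at $p$. I would note that $\rk j'(p)$ and $\rk j(p)$ are by Definition~\ref{defi; logrank} the ranks of the fiberwise linear maps, and that passing to the fiber at $p$ in the identities $j'^\vee = \beta(M'_f(1-d))$ and $\beta(T_f(1-d)) = j^\vee$ (together with the commutative diagram in the proof of Lemma~\ref{lemm; augsyzygyrank} giving $\rk M_f(p) = \rk T_f(p)+1$) yields exactly $\rk j'(p)=\rk M'_f(p)$ and $\rk j(p)+1 = \rk M_f(p)$; all of this is recorded in the two lemmas, so I would simply cite them. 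The independence of these ranks on the choice of point $\lambda p$ in the affine cone (noted after Definition~\ref{defi:syzygyrank}) and on the presentations $M'_f$, $M_f$ (noted before Definition~\ref{defi; logrank}) makes the statement well-posed.

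There is no real obstacle here: all the substantive work — the identification of $\beta(K)(1-d)$ with $\textup{Der}_{\PP^n}(-\log D)\otimes\mathcal L^\vee$ via \eqref{seq; sesq2global} and Proposition~\ref{prop; decompofP1L}, the Euler-sequence bookkeeping that accounts for the extra relation $\sum x_i f_{x_i} = df$, and Rodr\'{i}guez's analytic characterization underlying Proposition~\ref{prop; logrankSEH} — has already been done. The theorem is the formal conjunction of these results, and I expect the proof to be two or three sentences combining Lemma~\ref{lemm; syzygyrank}, Lemma~\ref{lemm; augsyzygyrank}, and Proposition~\ref{prop; logrankSEH}.
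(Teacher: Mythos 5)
Your proposal is correct and matches the paper's argument exactly: the theorem is stated in the paper as an immediate combination of Lemma~\ref{lemm; syzygyrank}, Lemma~\ref{lemm; augsyzygyrank}, and Proposition~\ref{prop; logrankSEH}, which is precisely your chain of substitutions. The extra remark about passing from the sheaf morphisms to their fibers at $p$ is a reasonable point of care but is already implicit in how the lemmas are stated.
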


Recall that Saito’s results on weighted homogeneous polynomials (see \cite{KSaito71}) shows that, a hypersurface  germ with  isolated singularity is   strongly Euler homogeneous  if and only if it is quasi-homogeneous.  Thus the following corollary  refines \cite[Theorem 1.1]{ABDM25}.
\begin{coro}
\label{coro; isohyper}
Assume that  $D$ has at most isolated singularity at a point  $p\in D$. Then
\begin{enumerate} 
\item  $\rk M'_f(p)=n$ if and only if $p$ is   a smooth point of $D$.
\item  $\rk M'_f(p)=1$ if and only if $p$ is   singular   and $D$ is  strongly Euler homogeneous at $p$. 
\item  $\rk M'_f(p)=0$ if and only if $p$ is   singular   and $D$ is  not strongly Euler homogeneous at $p$. 
\end{enumerate}   
\end{coro}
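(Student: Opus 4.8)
The plan is to deduce Corollary~\ref{coro; isohyper} from Theorem~\ref{theo; syzygyrankcriterion} together with a rank analysis of the augmented syzygy matrix $M_f(p)$ under the isolated-singularity hypothesis. The first step is to pin down $\rk M_f(p)$. Since the last column of $M_f(p)$ is $(x_0,\dots,x_n)$ evaluated at a nonzero point of the cone $\hat D$, it is a nonzero vector, so $\rk M_f(p)\geq 1$. Using the identification $\rk M_f(p)=\rk j(p)+1$ from Lemma~\ref{lemm; augsyzygyrank}, it suffices to show that $j(p)$ has rank $0$ at an isolated singular point, i.e.\ that every logarithmic derivation along $D$ vanishes at $p$ (after the Euler field has been split off). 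Concretely, in the local picture at the end of \S\ref{sec; syzygyrank}, $\cR_D(\cU)=\cJ_D(\cU)/(h)$ is supported at the isolated point $p$, so it has finite length; the surjection $\rho_\cU\colon \sO_\cU^{\oplus n}\to\cR_D(\cU)$ together with $\rho_\cU(a_1,\dots,a_n)=\sum a_i\overline{h}_{x_i}$ forces each generator $\chi_k=\beta_k$ of $\textup{Der}_\cU(-\log D)$ to have components lying in $\mathfrak m_p$ — indeed a logarithmic derivation not vanishing at $p$ would, by the implicit function theorem, be a local coordinate vector field and would make $D$ smooth at $p$, contradicting that $p$ is singular. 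Hence $j^\vee(p)=0$, $\rk j(p)=0$, and $\rk M_f(p)=1$. (Alternatively one cites the asserted fact $\rk M_f(p)=\rk\textup{Pic}(\PP^n)=1$ from the introduction, but I would prefer to give the short self-contained argument above.)

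With $\rk M_f(p)=1$ established, Theorem~\ref{theo; syzygyrankcriterion} says $D$ is strongly Euler homogeneous at $p$ if and only if $\rk M'_f(p)=1$. Next I would handle the trichotomy on $\rk M'_f(p)$. Using $\rk M'_f(p)=\rk j'(p)$ from Lemma~\ref{lemm; syzygyrank}, and the matrix presentation $(\dagger)$, note $j'^\vee(p)$ is obtained from $j^\vee(p)$ by appending one row $(-d\alpha_{1,0},\dots,-d\alpha_{m,0})(0)$; since $j^\vee(p)=0$ at a singular point, $\rk j'(p)\in\{0,1\}$ there. So at a singular point the only possibilities are $\rk M'_f(p)=0$ or $\rk M'_f(p)=1$, and by Theorem~\ref{theo; syzygyrankcriterion} the value $1$ occurs exactly in the strongly Euler homogeneous case and $0$ exactly in the non-strongly-Euler-homogeneous case. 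This proves statements (2) and (3), once we also know (1) characterizes the smooth locus so that ``$p$ singular'' is exactly the complement of ``$\rk M'_f(p)=n$''.

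For statement (1): if $p$ is a smooth point of $D$, then $\textup{Der}_{\PP^n}(-\log D)$ is locally free of rank $n$ near $p$ and surjects onto $\textup{Der}_{\PP^n}$ away from $D$ with a full-rank log morphism, so $\rk j(p)=n$, whence (via Lemma~\ref{lemm; syzygyrank} and comparing with $j$ since at a smooth point $j'$ and $j$ have the same rank, the extra row being dependent because $h\in\cJ_D=\cI_D$ locally) $\rk M'_f(p)=n$; conversely if $\rk M'_f(p)=\rk j'(p)=n$ then $\rho'_p$ is surjective with full-rank source, forcing $\cJ_D$ to be the unit ideal near $p$, i.e.\ $p\notin\operatorname{Sing}D$. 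A clean way to package this is: $\rk j'(p)=n$ iff $\cJ_{D,p}=\sO_{\PP^n,p}$ iff $p$ is smooth, using that $\rho'$ has image $\cJ_D\otimes\mathcal L$ from sequence~\eqref{seq; sesq2global}. Then (2) and (3) follow by combining this with the case analysis above and invoking Saito's theorem (\cite{KSaito71}) to replace ``strongly Euler homogeneous'' by ``quasi-homogeneous'' is optional here since the corollary as stated uses ``strongly Euler homogeneous''.

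The main obstacle is the local argument that $j(p)=0$ at an isolated singular point — equivalently that all logarithmic vector fields (modulo the radial one) vanish at such a point. This is the crux: everything else is bookkeeping with the rank identities of Lemmas~\ref{lemm; syzygyrank} and~\ref{lemm; augsyzygyrank} and with Theorem~\ref{theo; syzygyrankcriterion}. I would prove the vanishing by the implicit-function-theorem dichotomy: a germ $\chi\in\textup{Der}_{\PP^n}(-\log D)_p$ with $\chi(p)\neq 0$ can be straightened to $\partial/\partial y_1$, and $\chi(h)\in(h)$ then forces $h$ to be (a unit times) a function of $y_2,\dots,y_n$ alone, contradicting that $p$ is an \emph{isolated} singularity (the singular locus would contain the $y_1$-axis). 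One must also make sure the finitely many generators $\beta_k$ really all land in $\mathfrak m_p$ after quotienting by the Euler field; this is exactly the content of the Remark following Lemma~\ref{lemm; augsyzygyrank}, where the local form of $M'_f(p)$ and $M_f(p)$ is written out, so I would simply cite that computation and observe that the $\alpha_{k,i}(0)$ entries in the non-Euler columns all vanish precisely because of the straightening argument.
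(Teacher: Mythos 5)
Your proposal is essentially correct and follows the same route as the paper: both arguments reduce the corollary to computing $\rk j(p)$ at smooth points and at isolated singular points, and then invoke Lemmas~\ref{lemm; syzygyrank}, \ref{lemm; augsyzygyrank} and Proposition~\ref{prop; logrankSEH} (equivalently Theorem~\ref{theo; syzygyrankcriterion}). The difference is that the paper simply cites Saito's logarithmic stratification --- $\rk j(p)$ equals the dimension of the logarithmic stratum through $p$, which for a divisor with isolated singularities is $0$ at a singular point and $n-1$ at a smooth point --- whereas you prove the crucial vanishing $\rk j(p)=0$ directly by the straightening argument ($\chi(p)\neq 0$ and $\chi(h)\in(h)$ force $h$ to be a unit times a function of $n-1$ variables, so the singular locus contains a curve). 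That is exactly the standard proof of the stratification fact, so your version is more self-contained but not a genuinely different method.

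One local error to fix in your treatment of part (1): at a smooth point $p\in D$ the correct value is $\rk j(p)=n-1$, not $n$ (the logarithmic vector fields evaluated at $p$ span only $T_pD$; in coordinates with $h=y_1$ the generators are $h\partial_{y_1},\partial_{y_2},\dots,\partial_{y_n}$ and the first one vanishes on $D$). Consequently your claim that ``at a smooth point $j'$ and $j$ have the same rank'' is also false --- the extra row of $j'^\vee$ contributes, since $\beta(h)=h$ gives $g=1\neq 0$ exactly where the corresponding column of $j^\vee(p)$ dies --- and indeed it must be false, because smooth points are strongly Euler homogeneous and Proposition~\ref{prop; logrankSEH} then forces $\rk j'(p)=\rk j(p)+1=n$. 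Your alternative ``clean packaging'' ($\rk j'(p)=n$ iff $\cJ_{D,p}$ is the unit ideal, via the presentation $\sP_1\to(\cP^1_X\cL)^\vee\to\cJ_D\to 0$ and Nakayama) is correct and should replace the first argument; note also that the converse direction of (1) is automatic from your trichotomy, since at a singular point you have already shown $\rk M'_f(p)\leq 1<n$.
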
 
\begin{proof}
For any  point  $p\in D$   the rank
$\rk j(p)$ is equal to the dimension of the logarithmic stratum containing $p$. 
Since $D$ has only isolated singular points, the logarithmic strata of $D$   are either $D_{sm}$ or the singular points themselves. 
So $\rk j(p)=0$ when $p$ is an isolated singularity and $\rk j(p)=n-1$ when $p$ is smooth.
\end{proof}

\section{Characterizations of Strong Euler Homogeneity by Log Characteristic Cycles}
\label{section:SEHandlogCC}

Given any representation of $M'_f$ as in \eqref{eq; presentationofM_f}, we define a subscheme $Z_f\subset \PP^n \times \PP^n$ by the following homogeneous ideal
\[
 I_{Z_f}:= \left(\sum_{i=0}^n \delta_{1,i}(\underline{x})\cdot y_i,\  \sum_{i=0}^n \delta_{2,i}(\underline{x})\cdot y_i, \  \cdots \ ,  \  \sum_{i=0}^n \delta_{m,i}(\underline{x})\cdot y_i \right)
\]
where $\underline{x}=[x_0: x_1: \cdots :x_n]$ and $\underline{y}=[y_0: y_1: \cdots :y_n]$  are the homogeneous coordinates of the first and the second component respectively. We define a subvariety $S_f\subset \PP^n \times \PP^n$ as the closure of the graph of the polar map 
\[
\nabla f: \PP^ n  \to \PP^n \/; \quad 
 \underline{x}  \mapsto [\partial_0 f(\underline{x}):\ldots,:\partial_n f(\underline{x})].
\]
Finally we define an incidence variety 
\[
I:=V(Q)\subset \PP^n \times \PP^n \text{ where } Q:=x_0y_0+x_1y_1+\cdots +x_ny_n \/.
\]

When $n=2$, the following result is proved in \cite[Theorem 4.7]{ABDM25}.
\begin{prop}\label{prop:ABDMrestate}
$D$ is a quasi-homogeneous curve in $\PP^2$ if and only if $S_f =Z_f$. When $Z_f$ is irreducible there is an equality of cycles $[Z_f] = [S_f] + \sum m_i[P_i \times \mathbb{P}^2]$ where $\{ P_1,\ldots, P_s\}$ is the set of singularities on $D$. Moreover $\sum m_i= \mu(D)-\tau(D)$ where $\mu(D)$ is the global Milnor number and $\tau(D)$ is the global Tjurina number. 
\end{prop}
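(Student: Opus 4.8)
The plan is to determine the irreducible components of $Z_f$ together with their multiplicities, using the fibrewise rank information of Corollary~\ref{coro; isohyper}, and then to evaluate the multiplicity of each excess component $\{P_i\}\times\PP^2$ by a local computation.

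First I would record the easy containment $S_f\subseteq Z_f$: on the graph of $\nabla f$ one has $\underline y=\lambda\,\nabla f(\underline x)$, so every generator $\sum_i\delta_{k,i}(\underline x)y_i$ of $I_{Z_f}$ vanishes there by the syzygy relation $\sum_i\delta_{k,i}f_{x_i}=0$, hence on the closure $S_f$ too. Since these generators are linear in $\underline y$, the first projection $\mathrm{pr}_1\colon Z_f\to\PP^2$ has fibre over $p$ equal to the projectivised left kernel of the scalar matrix $M'_f(p)$, a linear subspace of $\PP^2$ of dimension $2-\rk M'_f(p)$. Feeding in Corollary~\ref{coro; isohyper} (and $V(J_f)=\mathrm{Sing}(D)=\{P_1,\dots,P_s\}$, which is finite since $D$ is a reduced plane curve): the fibre is the point $[\nabla f(p)]$ over a smooth point, a line $\PP^1$ over a quasi-homogeneous singular point, and all of $\PP^2$ over a non-quasi-homogeneous singular point. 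Hence $\dim Z_f=2$; any component of $Z_f$ not generically equal to $S_f$ projects into $\{P_1,\dots,P_s\}$, so lies in some $\{P_i\}\times\PP^2$, and filling out the fibre forces it to be $\{P_i\}\times\PP^2$, which is $2$-dimensional exactly when $D$ is not quasi-homogeneous at $P_i$; conversely over a quasi-homogeneous $P_i$ the fibre of $S_f$ is already $1$-dimensional (as $\mathrm{pr}_1\colon S_f\to\PP^2$ is birational with a genuine indeterminacy at $P_i\in V(J_f)$, so its fibre there is positive-dimensional by Zariski's main theorem), hence equals the line that is the fibre of $Z_f$. So, set-theoretically, $Z_f=S_f\cup\bigcup_{i\in N}(\{P_i\}\times\PP^2)$ with $N=\{i: D\text{ not quasi-homogeneous at }P_i\}$, and $Z_f$ is generically reduced along $S_f$ because it agrees with the variety $S_f$ over the dense open set $\PP^2\setminus\mathrm{Sing}(D)$. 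This gives the first equivalence ($Z_f=S_f$ iff $D$ is quasi-homogeneous at every singular point; and $Z_f$ irreducible forces $Z_f=S_f$), together with the shape of the cycle $[Z_f]=[S_f]+\sum_i m_i[P_i\times\PP^2]$, where $m_i$ is the length of $\mathcal O_{Z_f}$ at the generic point of $\{P_i\}\times\PP^2$ and is positive precisely at the non-quasi-homogeneous points.

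It remains --- and this is the main step --- to show $m_i=\mu_i-\tau_i$, the local Milnor minus Tjurina number at $P_i$; summing then gives $\sum m_i=\mu(D)-\tau(D)$. I would localise $I_{Z_f}$ at the generic point $\eta_i$ of $\{P_i\}\times\PP^2$, working in the two-dimensional regular local ring $A=\mathcal O_{\PP^2\times\PP^2,\eta_i}$. Dehomogenising the syzygies at $P_i=[1:0:0]$ as in \S\ref{sec; syzygyrank}, the generators $\sum_j\delta_{k,j}(\underline x)y_j$ become $\sum_j\alpha_{k,j}(u,v)y_j$, and the vectors $(\alpha_{k,0},\alpha_{k,1},\alpha_{k,2})$ run through a generating set of $\mathrm{Syz}_A(\phi_0,\phi_1,\phi_2)$, with $\phi_0=dg-ug_u-vg_v$, $\phi_1=g_u$, $\phi_2=g_v$ generating the local Tjurina ideal $T=(g,g_u,g_v)$ of colength $\tau_i$. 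Thus $\mathfrak b_i:=I_{Z_f,\eta_i}=(\sum_j y_j s_j:(s_0,s_1,s_2)\in\mathrm{Syz}_A(\phi_0,\phi_1,\phi_2))$ and $m_i=\operatorname{length}_A(A/\mathfrak b_i)$. As $T$ is $\mathfrak m$-primary in a $2$-dimensional regular ring, its syzygy module is free (Hilbert--Burch), and the functional $\underline y\colon A^3\to A$ induces an exact sequence $0\to\mathfrak c_i\to T\to A/\mathfrak b_i\to 0$ with $\mathfrak c_i=\{\sum_j a_j\phi_j:\sum_j a_jy_j=0\}$, the ideal generated by a generic $2$-dimensional space of $\mathcal O$-combinations of $g,g_u,g_v$. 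Since the $y_j$ are transcendental over $\CC$, $\mathfrak c_i$ is a minimal reduction of $T$, hence a parameter ideal with $\operatorname{length}_A(A/\mathfrak c_i)=e(\mathfrak c_i)=e(T)$; together with $A/\mathfrak b_i\cong T/\mathfrak c_i$ this gives $m_i=\operatorname{length}_A(A/\mathfrak c_i)-\tau_i=e(T)-\tau_i$. Finally $e(T)=e((g_u,g_v))=\mu_i$: the Jacobian ideal $(g_u,g_v)$ is a parameter ideal of colength $\mu_i$, and it has the same integral closure as $T$ because $g$ lies in the integral closure of its Jacobian ideal (a standard property of isolated hypersurface singularities), so the two multiplicities coincide. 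This yields $m_i=\mu_i-\tau_i$.

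The main obstacle is this last step, $m_i=\mu_i-\tau_i$: the component structure of $Z_f$ falls out cleanly from the syzygy-rank picture of \S\ref{sec; syzygyrank}, but the multiplicity requires the Hilbert--Burch description of the local Tjurina ideal, the exact sequence comparing $\mathfrak b_i$ with a generic reduction of $T$, and the Brian\c{c}on--Skoda-type input $g\in\overline{(g_u,g_v)}$ --- and making the bookkeeping of lengths, reductions and multiplicities match up is where the real work lies. A secondary point to check carefully is that dehomogenising the global syzygy matrix really does produce a generating set of $\mathrm{Syz}_A(\phi_0,\phi_1,\phi_2)$, so that $\mathfrak b_i$ has no spurious embedded contribution, and (if one wants the first assertion scheme-theoretically rather than as an equality of cycles) that $Z_f$ is actually reduced along $S_f$.
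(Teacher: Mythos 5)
Your argument is correct, but it reaches the key multiplicity formula by a genuinely different route from the paper. The paper does not prove Proposition \ref{prop:ABDMrestate} directly: it quotes it from \cite{ABDM25} and then rederives it in \S\ref{section:SEHandlogCC} as the $\PP^2$-avatar of a general statement about log characteristic cycles. There $S_f=\mathbb{P}(\overline{J\Lambda^\sharp})$ is the blowup of the Jacobian ideal, $Z_f=\mathbb{P}(j'^{-1}(X))$, and the coefficient of $[P_i\times\PP^2]$ is computed globally: after reducing to one singularity and compactifying, one evaluates $\int c_2(T^*X(\log D))$ in two ways using Ginzburg's theorem and the global index theorem, and invokes $\int c_2(TX(-\log D))-\chi(U)=\mu(P)-\tau(P)$ from \cite{MR2928937}; the freeness of curve divisors on surfaces is used throughout. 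Your decomposition of $Z_f$ into $S_f$ plus the planes $\{P_i\}\times\PP^2$ via the fibrewise ranks of $M'_f$ parallels the paper's component analysis, but your computation of $m_i$ is purely local commutative algebra: localizing at the generic point of $\{P_i\}\times\PP^2$ you identify $A/I_{Z_f}A\cong T/\mathfrak{c}$ with $T$ the local Tjurina ideal and $\mathfrak{c}$ a two-generated generic reduction, and then use $\operatorname{length}(A/\mathfrak{c})=e(\mathfrak{c})=e(T)=e\bigl((g_u,g_v)\bigr)=\mu_i$ via $g\in\overline{(g_u,g_v)}$. This buys an elementary, machinery-free proof that computes each $m_i$ directly and makes $\mu-\tau$ transparent as a difference of colengths; the price is the pair of standard inputs you rightly flag: the genericity argument showing that $(\phi_1-\eta_1\phi_0,\,\phi_2-\eta_2\phi_0)$ really is a reduction (the transcendence of $\eta_1,\eta_2$ over $\CC$ suffices, since the locus of bad coefficient matrices is a proper closed subset defined over $\CC$ and is invariant under row operations), and the fact that the dehomogenized global syzygies generate the local syzygy module of $\cJ_D$, which is exactly the content of Lemma \ref{lemm; syzygyrank}. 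The paper's route, by contrast, computes the multiplicity only through global Euler-characteristic bookkeeping, but embeds the statement in a framework valid for arbitrary reduced divisors on surfaces and ties it to strong Euler homogeneity; note also that the paper only claims the scheme-theoretic equality $S_f=Z_f$ under the stronger hypothesis of linear Jacobian type (Corollary \ref{coro; CC}), so your set-theoretic reading of the first assertion is the intended one.
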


What happens is that this proposition fits into a general framework concerning log characteristic cycles along a reduced divisor on any complex manifolds. To explain the general construction, we consider a reduced divisor $D$ on a complex manifold $X$. We first recall some facts from \cite{MR3366865}.  

\begin{defi}  
If the natural morphism $\textup{Sym}_{\sO_{X,p}}(\cJ_{D,p}) \to \textup{Rees}_{\sO_{X,p}}(\cJ_{D,p})$ is an isomorphism, we say $D$ is of linear Jacobian type at $p\in D$ . We say $D$ is of linear Jacobian type if it is so at any $p\in D$.
\end{defi}

\begin{prop} 
If $D$ is of linear Jacobian type at $p$, then it is strongly Euler homogeneous at $p$. If $D$ is free and quasi-homogeneous at $p$, then $D$ is of linear Jacobian type at $p$.
\end{prop}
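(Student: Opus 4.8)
The plan is to reconstruct both implications from the structure theory in \cite{MR3366865}, and I sketch the reasoning below. Both assertions are local, so I fix $\cO:=\sO_{X,p}$ (regular local of dimension $n$, maximal ideal $\mathfrak m$, residue field $\CC$), a reduced local equation $h$ of $D$, and set $\cJ:=\cJ_{D,p}=(h,h_{x_1},\dots,h_{x_n})$ and $J:=(h_{x_1},\dots,h_{x_n})$ for the Jacobian ideal. At a smooth point of $D$ both claims are immediate: in coordinates with $h=x_1$ one has $h=x_1\cdot h_{x_1}$ with $x_1\in\mathfrak m$, so $D$ is strongly Euler homogeneous at $p$, and $\cJ=\cO$ is principal, hence of linear type. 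So throughout I assume $p\in\textup{Sing}(D)$, equivalently $\cJ\subseteq\mathfrak m$.

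For the first implication I would argue in two steps. The first step reduces ``strongly Euler homogeneous at $p$'' to the purely algebraic condition $h\in J$: starting from any vector field $\eta=\sum c_i\partial_{x_i}$ with $\eta(h)=h$, either $\eta$ (or some field in its coset modulo $\textup{Der}_{X,p}(-\log D)$) vanishes at $p$ — in which case $D$ is strongly Euler homogeneous at $p$ already — or the straightening theorem lets us assume $\eta=\partial_{x_1}$, which forces $h=e^{x_1}\,C(x_2,\dots,x_n)$ and exhibits $D$ near $p$ as the cylinder $V(C)\times\CC$; since being singular at the origin, Euler homogeneous, and strongly Euler homogeneous are each preserved by, and reflected by, such a product with a line, an induction on $n$ completes the reduction. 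The second step extracts $h\in J$ from the hypothesis: applying $-\otimes_{\cO}\CC$ to the isomorphism $\textup{Sym}_{\cO}(\cJ)\xrightarrow{\sim}\textup{Rees}_{\cO}(\cJ)$ identifies the fibre cone $\textup{Rees}(\cJ)\otimes_{\cO}\CC$ with the polynomial ring $\textup{Sym}_{\CC}(\cJ/\mathfrak m\cJ)$, so the analytic spread of $\cJ$ equals its minimal number of generators; but the analytic spread is at most $\dim\cO=n$ while $\cJ$ is presented by the $n+1$ elements $h,h_{x_1},\dots,h_{x_n}$, so one generator is redundant modulo $\mathfrak m\cJ$, and — using the linear syzygies $h_{x_i}\cdot(\textup{gen }h)-h\cdot(\textup{gen }h_{x_i})=0$ together with the integrality of $\textup{Rees}(\cJ)$, which is the input I would borrow from \cite{MR3366865} — it is $h$ that drops out, i.e.\ $h\in J$.

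For the second implication I would use the grading supplied by quasi-homogeneity: in suitable coordinates $h$ is weighted homogeneous, $\cJ$ is a homogeneous ideal, and the Euler field $\chi=\sum w_ix_i\partial_{x_i}$ lies in $\textup{Der}_{X,p}(-\log D)$. Freeness together with Saito's criterion \cite{MR586450} provides a homogeneous $\cO$-basis $\chi=\delta_1,\dots,\delta_n$ of $\textup{Der}_{X,p}(-\log D)$; writing $\delta_k=\sum_i a_{ik}\partial_{x_i}$ and $\delta_k(h)=b_kh$, the identities $\sum_i a_{ik}h_{x_i}=b_kh$ show that the syzygy module of $\cJ$ is free of rank $n$, so $\cJ$ is a perfect ideal of codimension two whose Hilbert--Burch matrix is assembled from the Saito matrix $(a_{ik})$ and the column $(b_k)$. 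For such ideals one has Huneke's criterion that a grade-two perfect ideal is of linear type as soon as it satisfies condition $G_\infty$, i.e.\ $\mu(\cJ_{\mathfrak q})\leq\dim\cO_{\mathfrak q}$ (minimal number of generators) for every prime $\mathfrak q\supseteq\cJ$; I would then verify $G_\infty$ using that freeness forces $\textup{Sing}(D)$ to have codimension exactly two, combined with the grading, to control $\mu(\cJ_{\mathfrak q})$ at each relevant prime, and conclude that $\cJ$ is of linear type.

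I expect the main obstacle in the first implication to be isolating $h$, rather than one of the $h_{x_i}$, as the redundant generator: the analytic-spread count is blind to which generator disappears, so one genuinely needs the special form of the Jacobian syzygies together with the domain property of the Rees algebra. In the second implication the crux is the verification of $G_\infty$ for $\cJ_D$, which is precisely the step in which quasi-homogeneity is used over and above freeness.
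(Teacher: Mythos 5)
First, note that the paper does not prove this proposition at all: it is recalled verbatim from \cite{MR3366865}, so there is no in-paper argument to compare with. Judged on its own terms, your reconstruction has a genuine gap in each half. For the first implication, the pivotal claim that strong Euler homogeneity at $p$ is equivalent to $h\in J=(h_{x_1},\dots,h_{x_n})$ is false: $h\in J$ is plain Euler homogeneity, while strong Euler homogeneity is $h\in\mathfrak m\cdot J$, and your own straightening step manufactures counterexamples. Indeed, once $\eta(p)\neq 0$ and $h=e^{x_1}C(x_2,\dots,x_n)$, the condition $h\in J$ holds automatically (because $h=h_{x_1}$) for \emph{every} $C$, so it carries no information about the reduced factor; taking $C$ with an isolated non-quasi-homogeneous singularity gives $h\in J$ while $V(h)=V(C)\times\CC$ is not strongly Euler homogeneous at $0$, since strong Euler homogeneity is reflected by smooth factors and $V(C)$ fails it by Saito's theorem \cite{KSaito71}. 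The false step is the assertion that plain Euler homogeneity is ``reflected'' by products with a line. To repair the induction you must carry the full hypothesis of linear Jacobian type down to the reduced factor (this does descend: $\cJ_{D,p}$ is the flat extension of $\cJ_{D',p'}$ and $\textup{Sym}$, $\textup{Rees}$ commute with flat base change) and only at the bottom of the induction, where every logarithmic vector field vanishes at the point, use $h\in J$ to produce a vanishing Euler field. Relatedly, your step 2 is also incomplete as you half-concede: the analytic-spread count only yields a dependence $\lambda_0 h+\sum\lambda_i h_{x_i}\in\mathfrak m\cJ$ with $(\lambda)\neq 0$, and when $\lambda_0=0$ this merely records that the partials vanish at the singular point $p$; the argument that forces $\lambda_0\neq 0$ is precisely the content you defer to the reference.

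For the second implication, the Hilbert--Burch setup is sound (freeness is equivalent to the syzygy module of $\cJ$ being free of rank $n$), and ``projective dimension one plus $G_\infty$ implies linear type'' is a correct tool, but the verification of $G_\infty$ --- which you yourself identify as the crux --- is entirely absent, and it is doubtful it can be extracted from quasi-homogeneity at the single point $p$: $G_\infty$ bounds $\mu(\cJ_{\mathfrak q})$ at every prime $\mathfrak q\supseteq\cJ$ of $\cO_{X,p}$, in particular at the generic points of the branches of $\textup{Sing}(D)$ through $p$, where what is needed is quasi-homogeneity of the transversal singularity at points \emph{near} $p$. This is exactly why the theorem in the literature is stated for locally quasi-homogeneous free divisors and proved by induction on the dimension of the logarithmic strata, descending the hypothesis to transversal slices, rather than through a pointwise $G_\infty$ check. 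As written, neither half of the proposal closes.
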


Let $\Lambda=\textup{CC}(\gamma)$ be the characteristic cycle of a constructible function $\gamma$ on $X$ and let $f\in H^0(X,L)$ be a nonzero holomorphic section defining $D$. In \cite{LZ24-1}, we generalized Ginzburg's local sharp construction and defined a cycle $\overline{J\Lambda^\sharp}$ in $P^1_XL$, and in case $D$ is a free divisor we defined a logarithmic characteristic cycle $J\Lambda^{\log}$. Let $j': P_X^1L \to T^*X(\log D)\otimes L$ be the map introduced in \S\ref{sec; logrank} (Dualize the morphism $\mathscr{P}_1 \to (\mathcal{P}_X^1\mathcal{L})^\vee$ in the sequence \eqref{seq; j'} and take $\mathscr{P}_1=\textup{Der}_X(-\log D)$ becaue we assume $D$ is free. This map was denoted by $j'_L$ in \cite{LZ24-1}.) and let $\gamma=\textup{Eu}^\vee_Z$ where $Z$ is an irreducible subvariety not contained in $D$. We showed $\overline{J\Lambda^\sharp}$ is an irreducible component of $j'^{-1}(J\Lambda^{\log})$, and other irreducible components all lie over singular locus of $D$ and their dimensions are no less than $n+1$ (\cite[Proposition 3.4, Remark 4.2, Theorem 5.13]{LZ24-1}). Let's emphasize that the global Ginzburg's sharp construction does not require the freeness of $D$, but other constructions in \cite{LZ24-1} were all carried out under the freeness assumption.

\begin{defi}[\cite{LZ24-1}]
We say $Z$ is log transverse to $D$ if $\overline{J\Lambda^\sharp}$ is the only irreducible component of $j'^{-1}(J\Lambda^{\log})$ (so that they agree as sets). We say $Z$ is weakly log transverse to $D$ if irreducible components of $j'^{-1}(J\Lambda^{\log})$ other than $\overline{J\Lambda^\sharp}$ are all contained in $T^*X\otimes L$.
\end{defi}

\begin{prop}[\cite{LZ24-1}]
Let $D$ be a free divisor. $X$ is weakly log transverse to $D$ if and only if $D$ is strongly Euler homogeneous.
\end{prop}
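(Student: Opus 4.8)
The plan is to unwind the definitions of the cycles in play over the singular locus of $D$ and to match the resulting linear algebra with the pointwise rank criterion of Proposition~\ref{prop; logrankSEH}. Here $Z=X$, so $\gamma=\textup{Eu}^\vee_X$ and $\Lambda=\textup{CC}(\gamma)$ is, up to sign, the zero section of $T^*X$; correspondingly $J\Lambda^{\log}$ is supported on the zero section of $T^*X(\log D)\otimes L$. First I would note that both sides of the claimed equivalence are local along $D_{sing}$: on the one hand $D$ is strongly Euler homogeneous exactly when it is so at every point, which by Proposition~\ref{prop; logrankSEH} means $\rk j'(p)=\rk j(p)+1$ for all $p\in D$, a condition automatic at $p\in D\setminus D_{sing}$ (there $j'$ has maximal rank and $D$ is trivially strongly Euler homogeneous); on the other hand, since $J\Lambda^{\log}$ sits on the zero section, $j'^{-1}(J\Lambda^{\log})$ is as a set the zero locus of $j'$, i.e.\ the subvariety of $P^1_XL$ with fibre $\ker j'(q)$ over each $q$, which over $X\setminus D_{sing}$ is the line subbundle $\overline{J\Lambda^\sharp}$, all further components lying over $D_{sing}$ by the recalled results of \cite{LZ24-1}. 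So it suffices to show, for fixed $p\in D_{sing}$, that the components of $j'^{-1}(J\Lambda^{\log})$ through $p$ other than $\overline{J\Lambda^\sharp}$ lie in $T^*X\otimes L$ if and only if $\rk j'(p)=\rk j(p)+1$.

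Next I would pass to the local model of \S\ref{sec; logrank}: coordinates $x_1,\dots,x_n$ at $p$, a local equation $h$, a free basis $\delta_1,\dots,\delta_n$ of $\textup{Der}_X(-\log D)$ with $\delta_k(h)=g_kh$, and a trivialization of $L$. Then $P^1_XL$ is trivialized with fibre coordinates $(v_1,\dots,v_n,v_0)$, the subbundle $T^*X\otimes L$ is $\{v_0=0\}$, and $\rk j'(p)$ and $\rk j(p)$ are the ranks of the $(n+1)\times n$ and $n\times n$ matrices $[\beta_{k,i}(p)\,;\,g_k(p)]$ and $[\beta_{k,i}(p)]$ read off from \eqref{eq; matrixrepnofj}. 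The fibre of $j'^{-1}(J\Lambda^{\log})$ over $p$ is $\ker j'(p)$, and that of $\overline{J\Lambda^\sharp}$ is the limit of the lines $\ker j'(x)$ as $x\to p$, so the extra components through $p$ are what remains of $\ker j'(p)$ after discarding this limit. The decisive elementary point is that a jet with $v_0\neq 0$ lies in $\ker j'(p)$ precisely when the bottom row $(g_k(p))$ of \eqref{eq; matrixrepnofj} is a linear combination of the rows above it, i.e.\ precisely when $\rk j'(p)=\rk j(p)$; equivalently $\ker j'(p)\subset\{v_0=0\}=T^*X\otimes L$ if and only if $\rk j'(p)=\rk j(p)+1$. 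Combining this with the structural facts of \cite[Remark~4.2, Theorem~5.13]{LZ24-1}, which identify the extra components and bound their fibres, one concludes: if $\rk j'(p)=\rk j(p)+1$ then $\ker j'(p)$, hence every component of $j'^{-1}(J\Lambda^{\log})$ over $p$, lies in $T^*X\otimes L$; while if $\rk j'(p)=\rk j(p)$ the surplus of $\ker j'(p)$ over the fibre of $\overline{J\Lambda^\sharp}$ produces a component meeting the open set $\{v_0\neq 0\}$. Feeding this back through Proposition~\ref{prop; logrankSEH} at each $p\in D_{sing}$ gives the equivalence.

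The main obstacle is precisely that last matching: identifying the cycle-theoretic ``extra components'' of $j'^{-1}(J\Lambda^{\log})$ with explicit linear subspaces of $(P^1_XL)_p$, and verifying that in the non-strongly-Euler-homogeneous case ($\rk j'(p)=\rk j(p)$) such a component genuinely lies outside $T^*X\otimes L$ instead of being absorbed into $\overline{J\Lambda^\sharp}$. This is where the dimension estimates, multiplicities, and the exact shape of the sharp construction from \cite{LZ24-1} are indispensable, and where the sign normalization of $\textup{CC}(\textup{Eu}^\vee_X)$ must be pinned down carefully.
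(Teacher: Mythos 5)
The paper itself gives no proof of this proposition --- it is quoted verbatim from \cite{LZ24-1} --- so your reconstruction can only be judged on its own merits. Your overall strategy is the natural (and almost certainly the intended) one: since $J\Lambda^{\log}$ is the zero section, the fibre of $j'^{-1}(J\Lambda^{\log})$ over $p$ is exactly $\ker j'(p)$, and a vector $(v_1,\dots,v_n,v_0)\in\ker j'(p)$ with $v_0\neq 0$ exists precisely when the bottom row $(g_k(p))$ of $j'^\vee_p$ in \eqref{eq; matrixrepnofj} lies in the span of the rows above it, i.e.\ precisely when $\rk j'(p)=\rk j(p)$. That linear algebra is correct, as is the reduction to points of $D_{sing}$ and the final appeal to Proposition~\ref{prop; logrankSEH}.

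The genuine gap is the one you flag yourself and then leave open: in the direction ``not strongly Euler homogeneous at $p$ $\Rightarrow$ not weakly log transverse'' you must exclude the possibility that the locus $\{v\in\ker j'(p):v_0\neq 0\}$ is entirely swallowed by the fibre of $\overline{J\Lambda^\sharp}$ itself, in which case no \emph{other} component would leave $T^*X\otimes L$ and weak log transversality would hold anyway. The ``dimension estimates and multiplicities'' you invoke do not obviously settle this in general. What does settle it is the sharper fact that $\overline{J\Lambda^\sharp}\cap\pi^{-1}(D)$ is itself contained in $T^*X\otimes L$, so that any point of $\ker j'(p)$ off $\{v_0=0\}$ is forced onto a component different from $\overline{J\Lambda^\sharp}$. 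This in turn is the classical statement that a germ $h$ with $h(p)=0$ is integral over its gradient ideal: for an arc $\gamma$ through $p$ not contained in $D$ one has
\[
\operatorname{ord}(h\circ\gamma)=\operatorname{ord}\bigl((h\circ\gamma)'\bigr)+1\geq \min_i \operatorname{ord}(h_{x_i}\circ\gamma)+1,
\]
since $\operatorname{ord}(\gamma_i')\geq 0$; hence no limit over $p$ of the directions $[\,h_{x_1}:\cdots:h_{x_n}:h\,]$ spanning $\overline{J\Lambda^\sharp}$ away from $D$ can have $v_0\neq 0$ (and limits taken from within $D\setminus D_{sing}$ have $v_0=0$ outright). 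With this lemma inserted, your argument closes in both directions; without it, the ``only if'' direction is incomplete. The worry about the sign of $\textup{CC}(\textup{Eu}^\vee_X)$, by contrast, is immaterial here, since only supports and not multiplicities enter the statement.
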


Since we will only consider the case $Z=X$ in the sequel (except in Remark \ref{remark:inverseLambdalog}), we will use the letter $\Lambda$ exclusively for $[X]=\textup{CC}(\textup{Eu}^\vee_X)=\textup{CC}((-1)^n1_X)$.
We showed in \cite[Proposition 5.1]{LZ24-1} that when $D$ is a free divisor with linear type Jacobian ideal, then $X$ is log transverse to $D$. In fact, in this case $j'^{-1}(X)$ even agrees with $\overline{J\Lambda^{\sharp}}$ as schemes where $X=J\Lambda^{\log}$ is the zero section of $T^*X(\log D)\otimes L$. 

Now we drop the freeness assumption and only assume $D$ is of linear Jacobian type. Examining the proof of Proposition 5.1 in \cite{LZ24-1}, it can be seen that the same proof works almost verbatim in the new case except that we replace the locally free sheaf $\textup{Der}_X(-\log D)\otimes L^\vee$ in the old proof by $\mathscr{P}_1$ (cf. the sequence \eqref{seq; j}). Regard $x$ as the zero section of the vector bundle $E=\textup{Spec}(\textup{Sym}^\bullet\mathscr{P}_1)$ and recall $j': P^1_XL \to E$ is the fiberwise linear map between vector bundles defined in \S\ref{sec; logrank}.

\begin{prop}
When $D$ is of linear Jacobian type, we have $j'^{-1}(M)=\overline{J\Lambda^\sharp}$ as schemes. 
\end{prop}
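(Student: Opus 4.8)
The plan is to mimic the proof of \cite[Proposition 5.1]{LZ24-1} line by line, making only the substitution advertised in the paragraph preceding the statement: wherever that argument uses the locally free sheaf $\textup{Der}_X(-\log D)\otimes L^\vee$ (which makes sense there because $D$ is assumed free), we instead use the locally free sheaf $\mathscr{P}_1$ together with the right exact sequence \eqref{seq; j'}
\[
\begin{tikzcd}
\sP_1 \arrow[r, "j'^\vee"]  &  (\mathcal{P}_X^1\mathcal{L})^\vee\arrow[r, "\rho'"] & \cJ_D \arrow[r]  & 0.
\end{tikzcd}
\]
Dualizing and taking $\textup{Spec}\,\textup{Sym}^\bullet$ turns this into the fiberwise-linear bundle map $j'\colon P^1_XL \to E$, and the scheme $j'^{-1}(M)$ is the zero locus of the composite $P^1_XL \xrightarrow{j'} E$, i.e. it is defined by the ideal generated by the coordinates of $j'^\vee$ pulled back to $P^1_XL$. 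So everything is a question about the homogeneous ideal cut out by the columns of the local matrix representation \eqref{eq; matrixrepnofj} of $j'^\vee$, evaluated as linear forms in the fiber coordinates of $P^1_XL$.

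First I would set up the local picture at a point $p\in D$: choose coordinates $\underline{x}=(x_1,\dots,x_n)$ and a local equation $h$, so $\cJ_{D,p}=(h_{x_1},\dots,h_{x_n},h)$ and $(\mathcal{P}^1_X\mathcal{L})^\vee$ is free on generators dual to $(j^1(g)) = (g_{x_1},\dots,g_{x_n},g)$. The fiberwise coordinates on $P^1_XL$ are then $(\xi_1,\dots,\xi_n,\eta)$, and $\rho'$ in these coordinates is the surjection $\sP_{1,p}\to\cJ_{D,p}$ realized by the columns $\beta_k = (\beta_{k,1},\dots,\beta_{k,n}, g_k)$ of the matrix in \eqref{eq; matrixrepnofj}, where $\beta_k(h)=g_k h$. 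Thus $j'^{-1}(M)$ is locally $\Spec$ of $\sO_{P^1_XL,p}/\mathfrak{a}$ where $\mathfrak{a}=\bigl(\sum_{i=1}^n\beta_{k,i}\xi_i + g_k\eta\;:\;k=1,\dots,m\bigr)$. On the other hand, $\overline{J\Lambda^\sharp}$ was defined (Ginzburg's global sharp construction, \cite[\S3]{LZ24-1}) as the closure of the conormal-type locus over $X\setminus D$ inside $P^1_XL$; its defining ideal over the smooth locus is $\bigl(h\cdot\xi_i - h_{x_i}\cdot\eta : i\bigr)$ up to the usual saturation, and the point of the linear Jacobian type hypothesis is exactly that $\textup{Sym}_{\sO_{X,p}}(\cJ_{D,p}) \xrightarrow{\sim} \textup{Rees}_{\sO_{X,p}}(\cJ_{D,p})$, so that the "obvious" linear relations among the generators of $\cJ_{D,p}$ generate the full ideal of relations. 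Concretely this says: the module of syzygies of $(h_{x_1},\dots,h_{x_n},h)$, viewed through $\rho'$, has its $\Spec\textup{Sym}$ equal on the nose to $\textup{Proj}$ of the Rees algebra — which is precisely the statement that $j'^{-1}(M)=\overline{J\Lambda^\sharp}$ as schemes, because $\overline{J\Lambda^\sharp}$ is cut out inside $P^1_XL$ by the ideal generated by the linear (in $\xi,\eta$) elements coming from a generating set of $\textup{Syz}(\cJ_{D,p})$, and $\sP_1\twoheadrightarrow\textup{Der}_X(-\log D)\otimes\cL^\vee = \ker\rho'$ supplies exactly such a generating set.

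Then I would do the bookkeeping needed to match the two schemes globally: patch the local computations using the commutativity of the sequences \eqref{seq; j} and \eqref{seq; j'} with the short exact sequences \eqref{seq; sesq1global} and \eqref{seq; sesq2global}, and invoke the already-established fact (stated in the paragraph before the proposition) that $\overline{J\Lambda^\sharp}$ is an irreducible component of $j'^{-1}(J\Lambda^{\log})$, here with $J\Lambda^{\log}$ replaced by the zero section $M$ — noting that this substitution is legitimate for $Z=X$, $\Lambda=[X]$, because the only input from \cite[Proposition 5.1]{LZ24-1} that used freeness was the identification $J\Lambda^{\log}=M$ as schemes, and for $\Lambda=[X]$ one only needs the zero section. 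The equality of the two as schemes (not just as sets) then follows from the linear-type isomorphism upgrading the surjection $\textup{Sym}\to\textup{Rees}$ to a bijection, exactly as in the old proof.

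The main obstacle I expect is the bookkeeping around the failure of local freeness: in \cite[Proposition 5.1]{LZ24-1} one could identify $j'^{-1}(M)$ with a Rees-algebra $\textup{Proj}$ directly because $\textup{Der}_X(-\log D)\otimes L^\vee$ was a vector subbundle of $(\mathcal{P}^1_X\mathcal{L})^\vee$, whereas now $\mathscr{P}_1$ only surjects onto this kernel and is not a subbundle of $(\mathcal{P}^1_X\mathcal{L})^\vee$ — the map $j'^\vee$ may drop rank. One has to check that passing to $\Spec\textup{Sym}^\bullet\sP_1$ and then to $j'^{-1}(M)\subset P^1_XL$ does not introduce embedded or extra components beyond those already accounted for in \cite[Proposition 3.4, Remark 4.2]{LZ24-1}, i.e. that the scheme structure on $j'^{-1}(M)$ is insensitive to the choice of generating set $\sP_1\twoheadrightarrow\ker\rho'$ — this is essentially the observation that $j'^{-1}(M)$ only depends on the image ideal sheaf of $\rho'$, which is $\cJ_D\otimes\cL$, and the claim is a restatement of linear Jacobian type. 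Once this independence is isolated, the rest of the proof is a routine transcription.
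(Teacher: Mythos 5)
Your argument is correct and is essentially the paper's own proof: the paper simply asserts that the proof of \cite[Proposition 5.1]{LZ24-1} goes through verbatim once $\textup{Der}_X(-\log D)\otimes L^\vee$ is replaced by $\sP_1$, and your reduction --- the ideal of $j'^{-1}(M)$ is generated by the image of $j'^\vee$, which equals $\ker\rho'$ because $\sP_1$ surjects onto it, so $j'^{-1}(M)=\textup{Spec}\bigl(\textup{Sym}(\cJ_D\otimes\cL)\bigr)$ independently of the choice of $\sP_1$, while $\overline{J\Lambda^\sharp}=\textup{Spec}\bigl(\textup{Rees}(\cJ_D)\bigr)$ and linear Jacobian type identifies the two --- is exactly that substitution spelled out. (Only a cosmetic slip: inside $P^1_XL$ you want $\textup{Spec}$, not $\textup{Proj}$, of the Rees algebra.)
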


\begin{rema}
\label{rema: blowupJacobian}
We have $\overline{J\Lambda^\sharp}=\textup{Spec}(\textup{Rees}_{\mathscr{O}_X}(\mathcal{J}_D))$. Therefore $\mathbb{P}(\overline{J\Lambda^\sharp})$ agrees with the blowup of $X$ along the Jacobian ideal $\mathcal{J}_D$ for any reduced divisor $D$.
\end{rema}

Let $\{\Sigma_i\}$ be the set of irreducible components of $j'^{-1}(X)$, let $\pi:P^1_XL \to X$ be the projection and let $\pi_i: \Sigma_i \to X$ be the restrictions of $\pi$. Note that one of the irreducible components is $\overline{J\Lambda^\sharp}$ and all the others are mapped into $D_{sing}$ as in the free divisor case. In the spirit of the weak log transversality criterion for strong Euler homogeneity, we give a restatement of Proposition \ref{prop; logrankSEH}, generalizing \cite[Proposition 5.10]{LZ24-1}.

\begin{prop}
$D$ is strongly Euler homogoneous at $p\in D$ if and only if $\pi^{-1}_i(p) \subset T^*X\otimes L$ for every $i$.
\end{prop}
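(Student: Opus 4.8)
The plan is to relate the fiber $\pi^{-1}_i(p)$ of each component $\Sigma_i$ of $j'^{-1}(X)$ to the augmented logarithmic morphism $j'(p)$, and then invoke Proposition~\ref{prop; logrankSEH}. First I would unwind the definitions: the fiberwise linear map $j'\colon P^1_XL \to E$ (restricted to a neighborhood of $p$) sends the zero section $X\subset E$ back to a subscheme of $P^1_XL$ whose fiber over a point $q$ is the kernel of $j'(q)\colon (P^1_XL)_q \to E_q$. Over the smooth locus of $D$ this kernel is $\textup{Der}_X(-\log D)_q \otimes L^\vee_q$, which has dimension $1$ (a free rank-$1$ piece coming from the Euler-type relation), and the closure of this locus is exactly $\overline{J\Lambda^\sharp}$. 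The remaining components $\Sigma_i$ sit over $D_{sing}$. The key point is that for a component $\Sigma_i$ with $p\in\pi_i(\Sigma_i)$, the fiber $\pi^{-1}_i(p)$ is a linear subspace of $(P^1_XL)_p = \ker j'(p)$ (possibly a proper subspace if $\Sigma_i$ is not all of the fiber), and the question of whether it lies in $T^*X\otimes L$ is the question of whether the last coordinate of $P^1_XL$ (the $L$-component, in the local description $j^1(g)=(g_{x_1},\ldots,g_{x_n},g)$) vanishes identically on it.

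Next I would make the dimension count precise. Over $p$, the whole fiber of $j'^{-1}(X)$ is $\ker j'(p)\subset (P^1_XL)_p$, which has dimension $(n+1)-\rk j'(p)$. Inside $(P^1_XL)_p$ sits the hyperplane $H_p := (T^*X\otimes L)_p$ (vanishing of the $L$-coordinate), and $H_p \cap \ker j'(p) = \ker j(p)$ has dimension $n - \rk j(p)$ by comparing the two matrices in \eqref{eq; matrixrepnofj}. So $\ker j'(p)$ is contained in $H_p$ if and only if these two dimensions agree, i.e.\ if and only if $(n+1)-\rk j'(p) = n-\rk j(p)$, which is exactly $\rk j'(p) = \rk j(p)+1$, the condition in Proposition~\ref{prop; logrankSEH}. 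Now I must pass from ``$\ker j'(p)\subset H_p$'' to ``$\pi^{-1}_i(p)\subset H_p$ for every $i$''. One direction is immediate: if every fiber of $j'^{-1}(X)$ over $p$ lies in $H_p$, then so does $\pi^{-1}_i(p)$ for each component. For the converse, I would use the component $\Sigma_0 = \overline{J\Lambda^\sharp}$, which is generically over $D_{sm}$ of fiber dimension $1$ with fiber landing in $H$ (the Euler vector field on the smooth locus is a logarithmic derivation whose $j'$-image has zero $L$-coordinate precisely because $\chi(h)=0$ there, $h$ being reduced and smooth), together with the fact that $j'^{-1}(X)$ over $p$ is set-theoretically the union of the $\pi^{-1}_i(p)$. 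Since $\ker j'(p)$ is a linear space and a finite union of linear subspaces equals a linear space only if one of them is the whole thing, if all $\pi^{-1}_i(p)$ lie in $H_p$ then their union $\ker j'(p)$ lies in $H_p$.

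Then I would assemble the argument: Proposition~\ref{prop; logrankSEH} says $D$ is strongly Euler homogeneous at $p$ iff $\rk j'(p)=\rk j(p)+1$, which by the dimension count above is equivalent to $\ker j'(p)\subset (T^*X\otimes L)_p$, which in turn is equivalent to $\pi^{-1}_i(p)\subset T^*X\otimes L$ for every $i$ since $\bigcup_i \pi^{-1}_i(p)=\ker j'(p)$ and $\ker j'(p)$ is linear. I would phrase the final chain of equivalences carefully, noting that ``$\pi^{-1}_i(p)\subset T^*X\otimes L$'' is being read as a condition on the underlying sets (the $\Sigma_i$ being the irreducible components of the possibly non-reduced scheme $j'^{-1}(X)$).

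The main obstacle I anticipate is the converse direction, i.e.\ the need to know that the fibers $\pi^{-1}_i(p)$ actually exhaust the linear space $\ker j'(p)$ rather than only a proper subset of it. This is clear at the level of the reduced scheme $j'^{-1}(X)_{\mathrm{red}}$ because set-theoretically the fiber of a scheme over $p$ is the union of the fibers of its irreducible components over $p$; but one must be a little careful that no component gets ``lost'' in the fiber over $p$ (e.g.\ if some $\Sigma_i$ maps to $D_{sing}$ but misses $p$, it simply contributes the empty set and causes no trouble). A secondary, more bookkeeping-type point is verifying the local identification of $\ker j'(p)\cap (T^*X\otimes L)_p$ with $\ker j(p)$: this is exactly the comparison of the two matrices displayed in \eqref{eq; matrixrepnofj}, where $j'^\vee_p$ is $j^\vee_p$ with one extra row $(g_1,\ldots,g_m)$ appended — so the hyperplane $H_p$ is cut out by the last coordinate and intersecting the kernel with it is literally deleting that last row, which is routine linear algebra.
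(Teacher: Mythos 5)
Your argument is correct and is exactly the intended one: the paper states this proposition without a written proof, as a ``restatement'' of Proposition~\ref{prop; logrankSEH}, and your reduction --- $\bigcup_i\pi_i^{-1}(p)=\ker j'(p)$ set-theoretically, $\ker j'(p)\cap(T^*X\otimes L)_p=\ker j(p)$ by deleting the last row of $j'^\vee_p$ in \eqref{eq; matrixrepnofj}, and $\ker j'(p)\subset(T^*X\otimes L)_p$ if and only if $\rk j'(p)=\rk j(p)+1$ --- is precisely the linear algebra that makes the restatement work. The only blemish is your claim that each fiber $\pi_i^{-1}(p)$ is a linear subspace (it need not be, e.g.\ the fiber of $\overline{J\Lambda^\sharp}$ over a singular point is merely a cone), but nothing in your argument actually uses this: containment of each piece in the hyperplane is equivalent to containment of their union for arbitrary sets.
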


When $X$ is a surface, we can explicitly determine the cycle $[j'^{-1}(X)]$.

\begin{prop}
When $\dim X=2$ and $D$ is any reduced divisor on $X$, we have
\[
[j'^{-1}(X)]= [\overline{J\Lambda^\sharp}] + \sum_i (\mu(P_i)-\tau(P_i))[\pi^{-1}(P_i)],
\]
where $\mu(P_i)$ and $\tau(P_i)$ are the local Milnor number and Tjurina number at the singularity $P_i$.
\end{prop}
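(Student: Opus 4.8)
The plan is to reduce the computation to a local statement at each singular point and then invoke a codimension-two length count. First I would observe that $j'^{-1}(X) = V(I)$ where $I \subset \mathrm{Sym}^\bullet \mathscr{P}_1$ is the ideal generated by the entries of $\rho'$ (equivalently, by the $j'^\vee$-image of the zero section of $T^*X \otimes L$). Since $X$ is a surface and $D$ is reduced, $D_{\mathrm{sing}}$ is a finite set of points $\{P_i\}$, so all irreducible components of $j'^{-1}(X)$ other than $\overline{J\Lambda^\sharp}$ lie over these finitely many points, and by the earlier Remark~\ref{rema: blowupJacobian} the distinguished component is $\overline{J\Lambda^\sharp} = \mathrm{Spec}(\mathrm{Rees}_{\mathscr{O}_X}(\mathcal{J}_D))$. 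Hence as cycles $[j'^{-1}(X)] = [\overline{J\Lambda^\sharp}] + \sum_i c_i [\pi^{-1}(P_i)]$ for some nonnegative integers $c_i$, and it remains only to identify $c_i = \mu(P_i) - \tau(P_i)$.

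To pin down $c_i$, I would work locally in an analytic neighborhood $\mathcal{U}$ of $P_i$ with a local defining equation $h$, where $\mathcal{J}_D(\mathcal{U}) = (h, h_{x_1}, h_{x_2})$ and $j'^{-1}(X)$ is cut out inside $\mathcal{U} \times \mathbb{A}^2$ (the two free coordinates $a_1, a_2$ dual to $\partial_{x_1}, \partial_{x_2}$, after discarding the $b$-slot which is eliminated by the section-part of $\rho'$ — this is the place where the $+L$ twist and the structure of $\mathcal{P}^1_X\mathcal{L}$ enter). The fiber $\pi^{-1}(P_i)$ over the surface point $P_i$ is a $2$-plane, and the multiplicity $c_i$ is the length of the localization of $\mathscr{O}_{j'^{-1}(X)}$ at the generic point of that $2$-plane. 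The key computation is that this length equals the colength of the image of $\mathcal{J}_{D,P_i}$ inside the local ring after saturating away $\overline{J\Lambda^\sharp}$, which is precisely $\dim_\mathbb{C} \mathscr{O}_{X,P_i}/(h, h_{x_1}, h_{x_2})$ minus the "expected" contribution — i.e. $\tau(P_i)$ versus $\mu(P_i)$. Concretely: the embedded/extra component over $P_i$ has multiplicity equal to $\mathrm{length}\,\mathscr{O}_{X,P_i}/(h_{x_1},h_{x_2}) - \mathrm{length}\,\mathscr{O}_{X,P_i}/(h,h_{x_1},h_{x_2}) = \mu(P_i) - \tau(P_i)$, which is exactly the classical identity comparing the Milnor algebra to the Tjurina algebra for a plane curve singularity.

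The main obstacle I anticipate is making the length calculation for $c_i$ genuinely rigorous rather than heuristic: one must show that $j'^{-1}(X)$, as a scheme, decomposes near $\pi^{-1}(P_i)$ with the distinguished component $\overline{J\Lambda^\sharp}$ having multiplicity one along the $2$-plane it meets there (which follows because $\mathbb{P}(\overline{J\Lambda^\sharp})$ is the blowup of a reduced scheme along an ideal, hence generically reduced over $P_i$), and that the remaining length is additive — i.e. that localizing at the generic point of $\pi^{-1}(P_i)$ kills all contributions from $\overline{J\Lambda^\sharp}$ and leaves exactly the Rees-versus-symmetric-algebra discrepancy. I would handle this by the same argument structure as in \cite[Theorem 4.7]{ABDM25} / Proposition~\ref{prop:ABDMrestate}, which does exactly this for $X = \mathbb{P}^2$, transplanted to the local analytic setting; the surface hypothesis is essential because it forces the extra locus to be the whole fiber $\pi^{-1}(P_i)$ (codimension $n = 2$ in $X$) rather than a lower-dimensional subvariety of it, so no further stratification is needed.
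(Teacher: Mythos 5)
Your first step is sound and agrees with the paper: for a reduced divisor on a surface the singular locus is finite, the components of $j'^{-1}(X)$ other than $\overline{J\Lambda^\sharp}$ lie over $D_{sing}$ and have dimension at least $n+1=3$, so each must be an entire fiber $\pi^{-1}(P_i)$, and the whole proposition reduces to identifying the coefficients $c_i$. (One slip here: $\pi^{-1}(P_i)$ is a fiber of $P^1_XL$, a bundle of rank $n+1=3$, so it is a $3$-plane, not a $2$-plane; the $b$-coordinate is not ``discarded'', since the equations cutting out $j'^{-1}(X)$ are $\sum_i \beta_{k,i}a_i+g_kb=0$ and genuinely involve it.)

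The genuine gap is the identification $c_i=\mu(P_i)-\tau(P_i)$, which is exactly the content of the proposition and which your proposal asserts rather than proves. The quantity to be computed is the length of $\mathscr{O}_{j'^{-1}(X)}$ at the generic point of $\pi^{-1}(P_i)$, i.e.\ the length of the localization of $\mathrm{Sym}^\bullet(\mathcal{J}_{D,P_i})$ at that generic point; your ``concrete'' formula $\mathrm{length}\,\mathscr{O}_{X,P_i}/(h_{x_1},h_{x_2})-\mathrm{length}\,\mathscr{O}_{X,P_i}/(h,h_{x_1},h_{x_2})$ is simply the definition of $\mu(P_i)-\tau(P_i)$, and no argument is given connecting it to that localized length (``the colength after saturating away $\overline{J\Lambda^\sharp}$'' is never computed). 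Appealing to the argument of \cite[Theorem 4.7]{ABDM25} does not close this: as restated in Proposition~\ref{prop:ABDMrestate} that result only gives the global sum $\sum_i m_i=\mu(D)-\tau(D)$ for $\PP^2$, obtained by a degree count, so it neither localizes to individual singular points nor transfers to an arbitrary surface without the very computation you are missing. The paper takes a different route precisely to avoid this: it notes that $c_i$ depends only on the germ at $P_i$, shrinks and recompactifies so that $D$ has a single singularity $P$, and then evaluates $\int c_2(T^*X(\log D))=\sharp(k^*[X]\cdot[X])$ using Ginzburg's theorem to identify $i^*\bigl([\overline{\Lambda^\sharp}]+m[\pi^{-1}(P)]\bigr)$ with $\textup{CC}(1_U)+m\,\textup{CC}(1_P)$, the global index theorem, and the formula $\int c_2(TX(-\log D))-\chi(U)=\mu(P)-\tau(P)$ from \cite{MR2928937}. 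If you wish to keep a purely local proof, you must actually establish that the torsion of $\mathrm{Sym}^\bullet(\mathcal{J}_{D,P})$ over $\mathrm{Rees}(\mathcal{J}_{D,P})$ has length $\mu(P)-\tau(P)$ at the generic point of the fiber; that is a nontrivial commutative-algebra statement your proposal does not supply.
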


\begin{proof}
In this case $D$ is always free. We explained that the dimension of the irreducible components of $\overline{J\Lambda^\sharp} \setminus j'^{-1}(X)$ is no less than $2+1=3$ and those irreducible components are supported over $D_{sing}$. So $\overline{J\Lambda^\sharp} \setminus j'^{-1}(X) =\cup (\pi^{-1}(P_i))$ is the only possibility. To get the multiplicity for $\pi^{-1}(P_i)$, note that this number depends only on the singularity $P_i$, so we can shrink $X$ and assume $D$ has only one singularity $P$. We can compactify $X$ and $D$ without introducing new singularities on $D$ (by resolution of singularities), hence we may further assume $X$ is compact. Let $k: T^*X \to T^*X(\log D)$ and $k':P^1_XL\otimes L^\vee \to T^*X(\log D)$ be the untwisted logarithmic and augmentented logarithmic maps, i.e. $k=j\otimes L^\vee$ and $k'=j'\otimes L^\vee$. We have
\[
\int c_2(T^*X(\log D))= \sharp([X]\cdot [X]) = \sharp([X] \cdot k_*[X])= \sharp (k^*[X] \cdot [X])
\]
where the first intersection product $[X]\cdot [X]$ is calculated inside $T^*X(\log D)$. Let $i: T^*X \to P^1_XL\otimes L^\vee$ be the inclusion, we have
\[
k^*[{X}]=i^*k'^*[X]=i^*([\overline{\Lambda^\sharp}]+m\cdot[\pi^{-1}(P)])
\]
where $\overline{\Lambda^\sharp}=\overline{J\Lambda^\sharp}\otimes L^\vee$ is the untwisted global Ginzburg's sharp construction for $\Lambda=\textup{CC}(1_M)$ defined in \cite[Definition 3.12]{LZ24-1}. By Ginzburg's theorem \cite[Proposition 3.13]{LZ24-1}, we have
\[
i^*([\overline{\Lambda^\sharp}]+m\cdot[\pi^{-1}P])= \textup{CC}(1_U) + m\cdot \textup{CC}(1_P).
\]
Therefore
\[
\sharp (k^*[X] \cdot [X]) = \chi(U)+ m 
\]
by the global index theorem. Finally,
\[
m=\int c_2(T^*X(\log D)) -\chi (U) = \int c_2(TX(-\log D)) -\chi (U) = \mu(P) - \tau(P)
\]
by the proof of \cite[Theorem 3.1]{MR2928937} and \cite[Corollary 3.2]{MR2928937}.
\end{proof}

\begin{rema}
\label{remark:inverseLambdalog}
The property $k^{-1}(X)$ is a union of conormal spaces is interesting. Let $\Lambda=T^*_ZM$ where $Z$ is any subvariety of $X$ not contained in $D$. In general, without assuming $Z$ is log transverse to $D$, we do not know when $k^{-1}(\Lambda^{\log})$ is a union of conormal spaces (The definition of $\Lambda^{\log}$ is given at \cite[Definition 3.3]{LZ24-1}).
\end{rema}

\begin{lemm}
\label{lemma; SfZfinPn}
Let $X=\mathbb{P}^n$, and let $f\in H^0(\mathbb{P}^n,\mathcal{L})$ such that $f=0$ defines a reduced divisor $D$.
\begin{enumerate}
\item $S_f=\mathbb{P}(\overline{J\Lambda^\sharp})$.
\item $Z_f=\mathbb{P}(j'^{-1}(M))$.
\item $I=\mathbb{P}(T^*\mathbb{P}^n\otimes L)$ under the isomorphism $\mathbb{P}(P^1_{\mathbb{P}^n}L)\cong \mathbb{P}^n\times \mathbb{P}^n$.
\end{enumerate}
\end{lemm}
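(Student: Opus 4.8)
I would deduce all three identities from a single, carefully normalized identification of $\mathbb{P}(P^1_{\PP^n}L)$ with $\PP^n\times\PP^n$. By Proposition~\ref{prop; decompofP1L} applied with $\mathcal{L}=\mathcal{O}(d)$ we have $\mathcal{P}^1_{\PP^n}\mathcal{L}\cong\mathcal{O}_{\PP^n}(d-1)^{\oplus n+1}$, hence $\mathbb{P}(P^1_{\PP^n}L)\cong\mathbb{P}(\mathcal{O}_{\PP^n}^{\oplus n+1})=\PP^n\times\PP^n$; I keep $\underline{x}$ for the base coordinates and $\underline{y}$ for the fibre coordinates. The first step is to pin down the normalization: choose the splitting so that the tautological jet $j^1(f)\in H^0(\PP^n,P^1_{\PP^n}L)$ corresponds, via the Euler relation $\sum_i x_i f_{x_i}=d\,f$, to the vector of partials $(f_{x_0},\dots,f_{x_n})\in H^0(\mathcal{O}(d-1))^{\oplus n+1}$; equivalently, so that under the dual splitting $(\mathcal{P}^1_{\PP^n}\mathcal{L})^\vee\cong\mathcal{O}_{\PP^n}(1-d)^{\oplus n+1}$ the morphism $\rho'$ of \eqref{seq; sesq2global} is the map sending the $i$th generator to $f_{x_i}$, whose image is $\widetilde{J_f}=\mathcal{J}_D$. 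This is exactly the normalization already used implicitly in the proof of Lemma~\ref{lemm; syzygyrank}; that the splitting of Proposition~\ref{prop; decompofP1L} can be chosen this way I would check by a short computation in the standard affine charts appearing in that proof. With this identification fixed, each of (1)--(3) becomes a translation of a definition.

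For (3), recall that $T^*\PP^n\otimes L=\Omega^1_{\PP^n}\otimes L$ sits inside $P^1_{\PP^n}L$ as the kernel of the evaluation map of the canonical sequence $0\to\Omega^1_{\PP^n}\otimes L\to\mathcal{P}^1_{\PP^n}\mathcal{L}\to\mathcal{L}\to 0$. Under the normalization above, the Euler relation forces that evaluation map to be contraction against $(x_0,\dots,x_n)$, so $\mathbb{P}(T^*\PP^n\otimes L)$ is the zero locus of the tautological section $\sum_i x_i y_i=Q$, i.e.\ $V(Q)=I$; the scheme structures agree because $Q$ is irreducible.

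For (2), by the proof of Lemma~\ref{lemm; syzygyrank} (taking $\mathscr{P}_1=\bigoplus_{i=1}^m\mathcal{O}_{\PP^n}(1-d-d_i)$) the bundle map $j'\colon P^1_{\PP^n}L\to E=\operatorname{Spec}(\operatorname{Sym}^{\bullet}\mathscr{P}_1)$ is, under the splitting, the transpose of the matrix $M'_f$ of \eqref{eq; presentationofM_f} acting on the fibre coordinates $\underline{y}$; hence the scheme-theoretic preimage $j'^{-1}(M)$ of the zero section $M\subset E$ is cut out in $P^1_{\PP^n}L$ by the entries $\sum_{i=0}^n\delta_{k,i}(\underline{x})\,y_i$ for $k=1,\dots,m$, and projectivizing yields precisely the bihomogeneous ideal $I_{Z_f}$, so $\mathbb{P}(j'^{-1}(M))=Z_f$ as schemes. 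For (1), Remark~\ref{rema: blowupJacobian} gives $\overline{J\Lambda^\sharp}=\operatorname{Spec}(\operatorname{Rees}_{\mathcal{O}_{\PP^n}}(\mathcal{J}_D))$, so $\mathbb{P}(\overline{J\Lambda^\sharp})$ is the blowup of $\PP^n$ along $\mathcal{J}_D=\widetilde{J_f}$; the surjection $\mathcal{O}_{\PP^n}(1-d)^{\oplus n+1}\twoheadrightarrow\widetilde{J_f}$ sending the $i$th generator to $f_{x_i}$, which by the normalization is the one induced by $\rho'$, embeds this blowup in $\PP^n\times\PP^n$, and by the standard identification of the blowup of an ideal along a system of generators $(f_{x_0},\dots,f_{x_n})$ (all of degree $d-1$) with the closure of the graph of the associated rational map, the image is the closure of the graph of the polar map $\nabla f$, namely $S_f$. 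That this embedding is the one coming from $\mathbb{P}(P^1_{\PP^n}L)$ holds because both are induced by $\rho'$.

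The one genuinely non-formal point, and what I expect to be the main obstacle, is the bookkeeping of the first paragraph: fixing one normalization of the splitting $\mathcal{P}^1_{\PP^n}\mathcal{L}\cong\mathcal{O}_{\PP^n}(d-1)^{\oplus n+1}$ under which the tautological jet $j^1(f)$, the evaluation map $\mathcal{P}^1_{\PP^n}\mathcal{L}\to\mathcal{L}$, and the morphism $\rho'$ of \eqref{seq; sesq2global} match, respectively, $(f_{x_0},\dots,f_{x_n})$, contraction with $(x_0,\dots,x_n)$, and the map sending the $i$th generator to $f_{x_i}$. These three statements amount to essentially one chart computation, indeed the same compatibility already invoked in the proof of Lemma~\ref{lemm; syzygyrank}; once it is in place, parts (1), (2) and (3) follow by unwinding the relevant definitions.
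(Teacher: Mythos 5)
Your proposal is correct and takes essentially the same route as the paper: (1) from Remark~\ref{rema: blowupJacobian} together with the graph-closure description of the blowup along $(f_{x_0},\dots,f_{x_n})$, (2) from identifying $j'$ with the transpose of $M'_f$ under the splitting of Proposition~\ref{prop; decompofP1L}, and (3) from identifying $T^*\PP^n\otimes L$ with the vanishing locus of $Q=\sum x_iy_i$. The only cosmetic difference is in (3), where the paper carries out the chart computation explicitly with the logarithmic sections $r_i\otimes x_i$ constructed in the proof of Proposition~\ref{prop; decompofP1L}, whereas you deduce the same from the Euler relation applied to jets of global sections; both hinge on the same normalization of the splitting that you correctly isolate as the one nontrivial verification.
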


\begin{proof}
(i) follows from Remark \ref{rema: blowupJacobian}. Since $\mathcal{P}^1_{\mathbb{P}^n}\mathcal{L}\cong \sO(d-1)^{n+1}$ if $\deg \mathcal{L}=d$ by Proposition \ref{prop; decompofP1L}, $\mathbb{P}(P^1_{\mathbb{P}^n}L)\cong \mathbb{P}^n\times \mathbb{P}^n$ and the map $j':P^1L \to E$ is represented by the matrix $M^T_f$. Clearly the inverse image of zero section of $E$ is defined by the ideal $I_{Z_f}$ hence (ii) follows. To prove (iii), we use the notation in the proof of Proposition \ref{prop; decompofP1L}, and it is enough to show that $\Omega^1_{\mathbb{P}^n}$ is defined by $y_0x_0+ \ldots + y_nx_n=0$ in $\Omega^1_L(\log \mathbb{P}^n)\vert_{\mathbb{P}^n}$. On $U_i$, the expression $\sum r_j\otimes y_jx_j$ is trivialized into the expression
\[
\sum_{i=0}^n \Big(\frac{1}{k}\frac{dt_i}{t_i}\otimes \frac{y_jx_j}{x_i} - d(\frac{x_j}{x_i})\otimes y_j\Big).
\]
So when $\sum y_jx_j=0$, the undesirable differential form $\frac{dt_i}{t_i}$ disappears.
\end{proof}

We summarize below the avatar in $\mathbb{P}^n$ of the general properties listed above.

\begin{coro}
\label{coro; CC}
For any reduced divisor $D$ on $\mathbb{P}^n$, $S_f$ is always an irreducible component of $Z_f$. $D$ is strongly Euler homogeneous if and only if $Z_f \setminus S_f \subset I$, or equivalently $Z_f\vert_D \subset I$. If $D$ is of linear Jacobian type, then $S_f=Z_f$ as schemes. If $n=2$, then
\[
[Z_f] = [S_f] + \sum_i (\mu(P_i)-\tau(P_i))[P_i \times \mathbb{P}^2].
\]
\end{coro}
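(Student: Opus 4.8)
The plan is to obtain the four assertions of Corollary~\ref{coro; CC} by transporting, through Lemma~\ref{lemma; SfZfinPn}, the general facts about $\overline{J\Lambda^\sharp}$ and $j'^{-1}(M)$ established in \S\ref{section:SEHandlogCC} just before that lemma. Under the dictionary $S_f=\PP(\overline{J\Lambda^\sharp})$, $Z_f=\PP(j'^{-1}(M))$, $I=\PP(T^*\PP^n\otimes L)$ and $P_i\times\PP^2=\PP(\pi^{-1}(P_i))$, the four assertions correspond respectively to: (i) $\overline{J\Lambda^\sharp}$ is an irreducible component of $j'^{-1}(M)$ for any reduced $D$; (ii) the Proposition characterizing strong Euler homogeneity at $p$ by the condition $\pi_i^{-1}(p)\subset T^*\PP^n\otimes L$ for every component $\Sigma_i$ of $j'^{-1}(M)$; (iii) the scheme equality $j'^{-1}(M)=\overline{J\Lambda^\sharp}$ in linear Jacobian type; and (iv) the surface cycle identity $[j'^{-1}(M)]=[\overline{J\Lambda^\sharp}]+\sum_i(\mu(P_i)-\tau(P_i))[\pi^{-1}(P_i)]$. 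So the proof amounts to checking that projectivization respects the structures involved, plus one extra input for the ``if and only if''.

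First I would pin down the projectivization dictionary. Since $M$ is the zero section of $E$ and $j'$ is fiberwise linear, $j'^{-1}(M)$ — along with each of its irreducible components, with $\overline{J\Lambda^\sharp}$, and with $T^*\PP^n\otimes L$ — is a closed subcone of $P^1_{\PP^n}L$ stable under fiberwise scaling, so its image in $\PP(P^1_{\PP^n}L)\cong\PP^n\times\PP^n$ is a well-defined subvariety. None of these subcones is contained in the zero section of $P^1_{\PP^n}L$: $\overline{J\Lambda^\sharp}=\textup{Spec}(\textup{Rees}_{\sO_{\PP^n}}(\cJ_D))$ dominates $\PP^n$, while every component of $j'^{-1}(M)$ other than $\overline{J\Lambda^\sharp}$ has dimension at least $n+1$, exceeding $\dim\PP^n$. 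Since $\PP(-)$ is relative $\textup{Proj}$ of the homogeneous coordinate rings, it therefore gives a bijection between the irreducible components of $j'^{-1}(M)$ and those of $Z_f$, sends $\overline{J\Lambda^\sharp}$ to $S_f$, and is compatible with scheme structure and with cycle multiplicities. Clauses (i), (iii), (iv) follow at once; and applying $\PP(-)$ to the equivalence in (ii) — using that smooth points of $D$ are automatically strongly Euler homogeneous, and that $\pi^{-1}(p)\cap j'^{-1}(M)=\bigcup_i\pi_i^{-1}(p)$ — shows that $D$ is strongly Euler homogeneous (at every point of $D$) if and only if $j'^{-1}(M)\vert_D\subset T^*\PP^n\otimes L$, equivalently $Z_f\vert_D\subset I$.

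To complete the strong Euler homogeneity clause I would then identify the condition $Z_f\vert_D\subset I$ with $Z_f\setminus S_f\subset I$. One direction is immediate: all components of $j'^{-1}(M)$ other than $\overline{J\Lambda^\sharp}$ lie over $D_{sing}\subset D$, so $Z_f\setminus S_f$ is supported over $D_{sing}$ and hence contained in $Z_f\vert_D$. For the converse it is enough to know that $S_f\vert_D\subset I$ holds unconditionally, because $S_f$ is itself a component of $Z_f$. Over $D_{sm}$ the variety $S_f$ is the honest graph $\{(x,\nabla f(x))\}$, and $Q(x,\nabla f(x))=\sum_i x_i f_{x_i}(x)=d\,f(x)=0$ there; over $D_{sing}$ any point of $S_f$ is a limit $(p,[\ell])$ with $[\ell]=\lim[\nabla f(x_t)]$ along an arc $x_t\to p$, and the \L{}ojasiewicz gradient inequality $\lVert\nabla f(x)\rVert\ge c\,\lvert f(x)\rvert^{\theta}$ with $\theta<1$ forces $\sum_i p_i\ell_i=\lim d\,f(x_t)/\lVert\nabla f(x_t)\rVert=0$. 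Equivalently, this is the statement that the $L$-value coordinate of $\overline{J\Lambda^\sharp}$ vanishes over $D$, i.e.\ $\overline{J\Lambda^\sharp}\vert_D\subset T^*\PP^n\otimes L$. Granting this, $S_f\vert_D\subset I$ yields $Z_f\vert_D\subset I\iff Z_f\setminus S_f\subset I$, finishing that clause.

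The main obstacle I anticipate is exactly the containment $S_f\vert_D\subset I$, equivalently $\overline{J\Lambda^\sharp}\vert_D\subset T^*\PP^n\otimes L$, since it is the one ingredient not already packaged among the earlier propositions. Besides the \L{}ojasiewicz limiting argument sketched above, one can argue algebraically from the fact that the embedding $\overline{J\Lambda^\sharp}\hookrightarrow P^1_{\PP^n}L$ is cut out by the graded surjection $\textup{Sym}^{\bullet}((\mathcal{P}^1_{\PP^n}\mathcal{L})^\vee)\twoheadrightarrow\textup{Rees}(\cJ_D)$ together with $f\in\cI_D$. Everything else is bookkeeping: checking that passing to $\PP(-)$ of the conical subschemes in play is compatible with irreducible decomposition, cycle multiplicities and scheme structure.
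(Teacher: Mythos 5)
Your proposal is correct and follows essentially the same route as the paper, which derives Corollary~\ref{coro; CC} by transporting the general propositions about $\overline{J\Lambda^\sharp}$ and $j'^{-1}(M)$ through the dictionary of Lemma~\ref{lemma; SfZfinPn}; the paper in fact offers no further argument beyond ``summarizing the avatar in $\mathbb{P}^n$'' of those results. Your additional verification that $S_f\vert_D\subset I$ holds unconditionally (via \L{}ojasiewicz or the Euler relation over $D_{sm}$ plus a limiting argument), which is needed to equate the two formulations $Z_f\setminus S_f\subset I$ and $Z_f\vert_D\subset I$, is a detail the paper leaves implicit, and it is correct.
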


\section{Generalization to Smooth Projective Toric Varieties} 
\label{sec; toric}
In this section we prove a syzygy characterization for strong Euler homogeneity when $X$ is a smooth projective toric variety, generalizing Theorem~\ref{theo; syzygyrankcriterion}. 
\begin{setu}
\label{setup; toricvar}
$X=X(\Delta)$ is a smooth  $n$-dimensional projective toric variety determined by a fan $\Delta\subset N\cong \ZZ^n$.  We  assume that $\Delta(1)$ spans $N_\RR=N\otimes_\ZZ \RR$, where $\Delta(1)$ denotes the set of $1$-dimensional cones of $\Delta$. 
\end{setu}
Let $T=N\otimes_\ZZ \CC^*$ be the torus acting on $X$. 
Each $\rho\in \Delta(1)$ is a smooth cone and  corresponds to an irreducible $T$-invariant   divisor  $D_\rho$ of $X$. The Picard group $\textup{Pic}(X)\cong  A_{n-1}(X)$ is the free abelian group generated   by $\{\alpha_1=[D_{\rho_{1}}], \cdots ,\alpha_r=[D_{\rho_{r}}]\}$ for some $\rho_i\in \Delta(1)$,  where $r$ denotes the Picard number of $X$. We have the following  relation  $s:=|\Delta(1)|=r+n$.

We consider the polynomial ring $S=\CC[\{x_\rho|\rho\in \Delta(1)\}]$. Since each monomial $\underline{m}=\prod_\rho x_\rho^{a_\rho}$ determines a divisor $D=\sum_\rho a_\rho D_\rho$, 
we  denote  $\underline{m}$ by  $x^D$ and define 
$
\deg (x^D):=[D]\in \textup{Pic}(X) \/.
$
The $\textup{Pic}(X)$-grading on $S$ is then given by:
\[
S=\bigoplus_{\alpha\in \textup{Pic}(X)} S_\alpha \/, \    \text{ where }   S_\alpha:=\bigoplus_{\deg x^D=\alpha} \CC\cdot x^D\/.
\]
This $\textup{Pic}(X)$-graded ring $S$ is called the homogeneous coordinate ring of $X$ from \cite{Cox95}.   
We also define the shifted graded ring 
$
S(\alpha):=\bigoplus_{\beta\in \textup{Pic}(X)}  S_{\alpha+\beta} 
$ for any $\alpha\in \textup{Pic}(X)$.

For each cone $\sigma\in \Delta$ we define a monomial $x^{\hat{\sigma}}:=\prod_{\rho\notin \sigma} x_\rho$. These monomials generate the 
 irrelevant ideal  $B=(x^{\hat{\sigma}}:\sigma \in \Delta)$ of $S$.
 Let $V(B)$ be the subscheme of $\CC^{s}=\textup{Spec}(S)$  cut by  $B$. 
The torus $G:=\Hom(\textup{Pic}(X), \CC^*)\cong (\CC^*)^{r}$ acts on $\CC^{s}$  such that $U:=\CC^{s}\setminus V(B)$ is $G$-invariant. Since $X$ is assumed smooth, $G$ acts on $U$ freely and the toric map $\pi\colon U\to X=U//G$  is a geometric quotient (see \cite[Theorem 2.1]{Cox95} and  \cite[Lemma 5.1]{MR1841355}). In particular $U$ is a  $(\CC^*)^{r}$-fiber bundle over $X$.

In \cite{Cox95} (see also \cite[Chap 6, Appendix]{CLS11}) Cox proved a   local-global correspondence. 
\begin{theo}
\label{theo; toriccorrespondence}
There exist functors 
\[
\Gamma_*\colon Coh(X) \to GrMod_f(S) \text{ and }\  \widetilde{(\bullet)}\colon  GrMod_f(S)\to  Coh(X)
\]
between the category of coherent $\sO_X$ modules and the category of  finitely generated $\textup{Pic}(X)$-graded    $S$-modules such that   $\widetilde{(\bullet)}$ is exact and
$
\widetilde{ \Gamma_*(\cF)  }=\cF$ for any $\cF\in Coh(X) \/.$
\end{theo}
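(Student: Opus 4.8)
The statement is Cox's local-global correspondence, so the plan is to carry out Cox's construction, which runs parallel to the classical dictionary between quasi-coherent sheaves on $\PP^n$ and graded $\CC[x_0,\dots,x_n]$-modules recalled in the proof of Lemma~\ref{lemm; syzygyrank}; here the geometric quotient $\pi\colon U=\CC^{s}\setminus V(B)\to X$ of Setup~\ref{setup; toricvar} plays the role of $\CC^{n+1}\setminus\{0\}\to\PP^n$. First I would construct the sheafification functor $\widetilde{(\,\cdot\,)}$. For each maximal cone $\sigma\in\Delta$, write $U_\sigma\subset X$ for the associated affine chart; smoothness of $X$ gives $\pi^{-1}(U_\sigma)=\{x^{\hat{\sigma}}\neq 0\}=\Spec(S_{x^{\hat{\sigma}}})$, so that $U_\sigma=\Spec\big((S_{x^{\hat{\sigma}}})_0\big)$, where the subscript $0$ denotes the degree-zero part for the $\textup{Pic}(X)$-grading (equivalently, the $G$-invariants). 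Given a finitely generated $\textup{Pic}(X)$-graded $S$-module $M$, I would set $\widetilde{M}$ to be the sheaf obtained by gluing the quasi-coherent sheaves $\widetilde{(M_{x^{\hat{\sigma}}})_0}$ on the $U_\sigma$, the transition maps on $U_\sigma\cap U_\tau=\Spec\big((S_{x^{\hat{\sigma}}x^{\hat{\tau}}})_0\big)$ being the evident localizations, which satisfy the cocycle condition. Exactness of $\widetilde{(\,\cdot\,)}$ is then formal: localizing at a monomial is exact, and passing to the degree-zero graded component of a $\textup{Pic}(X)$-graded module is exact, since short exact sequences of graded modules are exact in each degree.

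Next I would define $\Gamma_*$. Put $\sO_X(\alpha):=\widetilde{S(\alpha)}$, which is a line bundle because it restricts trivially to each affine chart, and set
\[
\Gamma_*(\cF):=\bigoplus_{\alpha\in\textup{Pic}(X)}H^0\!\big(X,\cF\otimes_{\sO_X}\sO_X(\alpha)\big).
\]
Cox's computation $S_\alpha\cong H^0(X,\sO_X(\alpha))$ — global sections of $\sO_X(\sum_\rho a_\rho D_\rho)$ being spanned exactly by the monomials $x^{E}$ with $E\geq 0$ and $[E]=\sum_\rho a_\rho[D_\rho]$, which uses the assumption in Setup~\ref{setup; toricvar} that $\Delta(1)$ spans $N_\RR$ — equips $\Gamma_*(\cF)$ with a $\textup{Pic}(X)$-graded $S$-module structure via tensor product of sections, and functoriality in $\cF$ is immediate. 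The point that needs genuine input is that $\Gamma_*(\cF)$ is finitely generated when $\cF$ is coherent: one first observes $\Gamma_*(\sO_X(\beta))\cong S(\beta)$, so that $\Gamma_*$ of a finite sum of twisting sheaves is finitely generated, and then deduces finite generation for an arbitrary coherent $\cF$ from a presentation by such sheaves, the cokernels being controlled by Serre vanishing on the projective variety $X$. I expect this finiteness argument to be the main obstacle; the rest of the proof is bookkeeping.

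Finally, to prove $\widetilde{\Gamma_*(\cF)}\cong\cF$ I would check it chart by chart, i.e. exhibit a natural isomorphism $\big(\Gamma_*(\cF)_{x^{\hat{\sigma}}}\big)_0\cong H^0(U_\sigma,\cF)$ compatible on overlaps. Writing $D_{\hat{\sigma}}=\sum_{\rho\notin\sigma}D_\rho$, a class $s/(x^{\hat{\sigma}})^k$ with $s\in H^0\big(X,\cF\otimes\sO_X(k[D_{\hat{\sigma}}])\big)$ is sent to $s\cdot(x^{\hat{\sigma}})^{-k}$, a legitimate section of $\cF$ over $U_\sigma$ since $x^{\hat{\sigma}}$ is a nowhere-vanishing section of $\sO_X(k[D_{\hat{\sigma}}])$ there; conversely, quasi-coherence of $\cF$ guarantees that every $t\in H^0(U_\sigma,\cF)$, multiplied by a sufficiently high power of $x^{\hat{\sigma}}$, extends to a global section of some twist $\cF\otimes\sO_X(k[D_{\hat{\sigma}}])$, which gives the inverse. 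Compatibility on $U_\sigma\cap U_\tau$ is built into the construction, so this produces the natural isomorphism $\widetilde{\Gamma_*(\cF)}\xrightarrow{\sim}\cF$. This reproduces, over the toric charts, the $\PP^n$ argument recalled in the proof of Lemma~\ref{lemm; syzygyrank}; the complete details are in \cite{Cox95} and \cite[Ch.~6, Appendix]{CLS11}.
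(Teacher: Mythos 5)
The paper does not actually prove this statement: it is quoted from Cox \cite{Cox95} (see also \cite[Chap 6, Appendix]{CLS11}), and only the construction of $\widetilde{(\bullet)}$ is described afterwards. Your sketch reproduces the standard proof of the cited result, and the parts concerning $\widetilde{(\bullet)}$ (gluing over the charts $\Spec\bigl((S_{x^{\hat{\sigma}}})_0\bigr)$, exactness via exactness of localization and of passing to graded pieces) and the chart-by-chart verification of $\widetilde{\Gamma_*(\cF)}\cong\cF$ are correct.

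There is, however, a genuine error in the step you yourself flag as the main obstacle. The module $\Gamma_*(\cF)=\bigoplus_{\alpha\in\textup{Pic}(X)}H^0\bigl(X,\cF\otimes\sO_X(\alpha)\bigr)$ is in general \emph{not} finitely generated over $S$ once $\rk\textup{Pic}(X)>1$, so the functor as you define it does not land in $GrMod_f(S)$. Take $X=\PP^1\times\PP^1$, $S=\CC[x_0,x_1,y_0,y_1]$ with the bigrading, and $\cF=\sO_{\{p\}\times\PP^1}$ the structure sheaf of a fibre. By the projection formula $H^0\bigl(X,\cF\otimes\sO_X(a,b)\bigr)\cong H^0(\PP^1,\sO_{\PP^1}(b))$ for every $a\in\ZZ$ and every $b\geq 0$; since multiplication by a homogeneous element of $S$ can only move degrees into the effective cone, the nonzero graded pieces in degrees $(a,0)$ with $a\to-\infty$ cannot be reached from any finite set of generators. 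The proposed ``presentation plus Serre vanishing'' argument does not close this gap: Serre vanishing only controls twists deep inside the ample cone, while the cokernel of $\Gamma_*(\cE_0)\to\Gamma_*(\cF)$ lives precisely in the infinitely many remaining degrees (in the example, all $(a,b)$ with $a<0\leq b$). The standard repair, and what the cited sources actually prove, is to replace $\Gamma_*(\cF)$ by a finitely generated graded submodule $M\subset\Gamma_*(\cF)$ chosen so that its sections generate $\cF$ on each chart $V_\sigma$; such an $M$ exists by coherence and the finiteness of the cover, and still satisfies $\widetilde{M}\cong\cF$ because your extension argument only needs enough sections, not all of them. (This choice is what makes the statement as recorded in the paper slightly delicate --- strict functoriality of the finitely generated model requires an additional argument --- but the paper only ever uses the exactness of $\widetilde{(\bullet)}$ and the fact that every coherent sheaf is $\widetilde{M}$ for some finitely generated $M$, which the repaired argument does deliver.)
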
 
The $\widetilde{(\bullet)}$ operation can be described as follows.  The space $U$ has a $G$-invariant affine open covering $U=\bigcup_{\sigma\in \Delta} U_\sigma$
where each $U_\sigma:=\CC^s\setminus V(x^{\hat{\sigma}})$ has  coordinate ring  $S_\sigma:=S_{x^{\hat{\sigma}}}$, the localization of $S$ along $x^{\hat{\sigma}}$. The action of $G$ on $U_\sigma$ induces an action on $S_\sigma$ and we consider the $G$-invariant subring 
$
i_\sigma\colon (S_\sigma)^G=(S_\sigma)_0\hookrightarrow S_\sigma \/.
$
Then $\pi|_{U_\sigma}$ is induced by $i_\sigma$  and $\{V_\sigma:=\textup{Spec}((S_\sigma)_0)=\pi(U_\sigma)\}$ forms an open affine  covering of $X$.  
For each $\textup{Pic}(X)$-graded $S$-module $M$, the base change of the localized module $M_\sigma$ along $i_\sigma$ gives a $(S_\sigma)_0$-module  and hence induces a coherent sheaf on 
$V_\sigma$. Gluing them together we then obtain the $\sO_X$-module $\widetilde{M}$.

\begin{exam} 
\label{exam; toric}
Let $D$ be a  divisor in $X$ such that $[D]=\alpha\in \textup{Pic}(X)$. Then we have
\[
H^0(X, \sO_X(\alpha))\cong S_\alpha \text{ and }\  \Gamma_*(\sO_X(\alpha))=S(\alpha) \/.
\]
Hence we have $\widetilde{S(\alpha)}=\sO_X(D)$.  
\end{exam}

We are now ready to state the generalization of our syzygy rank characterization. Let $X$ be a smooth projective toric variety as in Setup~\ref{setup; toricvar} and $D\subset X$ be a reduced hypersurface cut by   a global homogeneous polynomial $f\in H^0(X, \sO_X(D))=S_\alpha$, where $[D]=\alpha \neq 0\in \textup{Pic}(X)$.  
We denote  $\sO_X(D)$ by $\cL$. Let  $f_{x_\rho}:=\frac{\partial f}{\partial x_\rho}$ be the partial derivative of $f$ with respect to $x_\rho$, 
 we  have the generalized Euler relation:
\begin{equation}
\label{seq; generalizedEulerRelation}
\sum_{\rho\in \Delta(1)} \phi(D_\rho)\cdot x_\rho  \cdot f_{x_\rho} =\phi(\alpha)\cdot f \/.
\end{equation}
for any  $\phi\in \Hom_{\ZZ}(\textup{Pic}(X), \ZZ)$ (see \cite[Exercise 8.1.8]{CLS11}). 

Similar to \S\ref{sec; syzygyrank} we define  the global Jacobian ideal $J_f:=(\{f_{x_\rho}|\rho\in \Delta(1)\})$ of $S$. Then by the generalized Euler relation we have $f\in J_f$. 
We also define $\cJ_D$  to be   the ideal sheaf of the  singularity subscheme of $D$ and $\cR_D:=\cJ_D/\cI_D$, where $\cI_D=\widetilde{(f)}$ is the ideal sheaf of $D$.

\begin{prop} 
\label{prop; JDandcJD}
We have $\widetilde{J_f}=\cJ_D$  and consequently  $\widetilde{J_f/(f)}=\cR_D$. 
\end{prop}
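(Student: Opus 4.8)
The plan is to check the identity $\widetilde{J_f}=\cJ_D$ after pulling back along the geometric quotient $\pi\colon U\to X$ of Setup~\ref{setup; toricvar}, which is faithfully flat since it is smooth and surjective; as both sides are coherent ideal sheaves of $\sO_X$, this suffices. Set $\hat{D}:=\pi^{-1}(D)=\{f=0\}\cap U$, an open piece of the hypersurface cut by $f$ in $\CC^s$. I would establish three facts: (A) $\pi^*\cJ_D=\cJ_{\hat{D}}$, the ideal sheaf of the singular subscheme of $\hat{D}$ in $U$; (B) $\pi^*\widetilde{J_f}=J_f\cdot\sO_U$, the ideal sheaf of $\sO_U$ generated by the restrictions of the $f_{x_\rho}$; and (C) $J_f\cdot\sO_U=\cJ_{\hat{D}}$. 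Together these give $\pi^*\widetilde{J_f}=\pi^*\cJ_D$, and faithful flatness of $\pi$ then forces $\widetilde{J_f}=\cJ_D$ as subsheaves of $\sO_X$. The remaining assertion is then formal: since $f\in J_f$ (already noted, via the generalized Euler relation~\eqref{seq; generalizedEulerRelation}), $(f)$ is a $\textup{Pic}(X)$-graded $S$-submodule of $J_f$, and applying the exact functor $\widetilde{(\bullet)}$ of Theorem~\ref{theo; toriccorrespondence} to $0\to(f)\to J_f\to J_f/(f)\to 0$, while using $\widetilde{(f)}=\cI_D$ and $\widetilde{J_f}=\cJ_D$, yields $\widetilde{J_f/(f)}=\cJ_D/\cI_D=\cR_D$.

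For (A), the map $\pi$ is smooth --- on each affine chart it is the trivial torsor $U_\sigma\cong V_\sigma\times(\CC^*)^r$ attached to the inclusion $(S_\sigma)_0\hookrightarrow S_\sigma$ --- so locally $U$ is an affine chart of $X$ times $(\CC^*)^r$, and any local defining equation of $D$, regarded on $U$, involves only the base coordinates; its Jacobian ideal on $U$ is then the extension of the Jacobian ideal on $X$, i.e. $\cJ_{\hat{D}}=\pi^*\cJ_D$. For (C), since $U\subset\CC^s=\Spec S$ is open, the coordinate functions $x_\rho$ restrict to coordinates on $U$, so the singular subscheme of $\hat{D}=\{f=0\}\cap U$ is cut by $(f)\cdot\sO_U+J_f\cdot\sO_U$; as $f\in J_f$, this is $J_f\cdot\sO_U$.

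Step (B) is where the only genuine care is needed, and it is the part I expect to be the main obstacle: one must match the description of $\widetilde{(\bullet)}$ in terms of degree-zero parts of graded localizations (recalled after Theorem~\ref{theo; toriccorrespondence}) with the honest ideal sheaf that $J_f$ cuts out on $U$. On $V_\sigma=\Spec(S_\sigma)_0$ the subsheaf inclusion $\widetilde{J_f}\hookrightarrow\sO_X=\widetilde{S}$ restricts to $((J_f)_\sigma)_0\hookrightarrow(S_\sigma)_0$, so over $U_\sigma=\Spec S_\sigma$ the sheaf $\pi^*\widetilde{J_f}$ is the ideal $((J_f)_\sigma)_0\cdot S_\sigma$ of $S_\sigma=\sO_U(U_\sigma)$ (still a subsheaf of $\sO_U$ by flatness of $\pi$). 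It then remains to see $((J_f)_\sigma)_0\cdot S_\sigma=(J_f)_\sigma=J_f\cdot S_\sigma$: every graded piece $(S_\sigma)_\beta$ is a free rank-one $(S_\sigma)_0$-module generated by a monomial that is a unit of $S_\sigma$ --- for a smooth maximal cone $\sigma$ the classes $[D_\rho]$ with $\rho\notin\sigma$ form a $\ZZ$-basis of $\textup{Pic}(X)$ --- so multiplying a homogeneous element of $(J_f)_\sigma$ by the inverse of such a unit places it in $((J_f)_\sigma)_0$. Gluing over the charts gives $\pi^*\widetilde{J_f}=J_f\cdot\sO_U$, and nothing beyond this bookkeeping, the explicit form of Cox's functor, and the Euler relation is needed.
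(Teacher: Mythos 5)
Your proposal is correct and follows essentially the same route as the paper: the paper's proof also passes to the affine cone $\hat D=V(f)$, observes that $J_f$ cuts out its singular subscheme (using $f\in J_f$ from the Euler relation), and descends along the $(\CC^*)^r$-bundle $\pi\colon U\to X$, with the second assertion obtained exactly as you do from exactness of $\widetilde{(\bullet)}$ applied to $0\to(f)\to J_f\to J_f/(f)\to 0$. Your steps (A)--(C) simply make explicit the details the paper compresses into the phrase ``by the $(\CC^*)^r$-fiber bundle structure.''
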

\begin{proof}
Let $\hat{D}=V(f)\subset \CC^s$ be the `affine cone' of $D$, then the global Jacobian ideal $J_f$ defines the singularity subscheme of $\hat{D}$. 
By the $(\CC^*)^r$-fiber bundle structure, the associated ideal sheaf $\widetilde{J_f}$ defines the singular subscheme of $D$. 
The second statement follows from the exactness of the $\widetilde{(\bullet)}$ operation and the short exact sequence 
$
0 \longrightarrow (f) \longrightarrow J_f \longrightarrow J_f/(f) \longrightarrow 0 \/.
$
\end{proof} 
 
Following \cite[Proposition 8.18]{MS05} we may form an  exact sequence  of $\textup{Pic}(X)$-graded modules:
\begin{equation}
\label{seq; toricsyzygy1}
\begin{tikzcd}
  \bigoplus_{\alpha \in \textup{Pic}(X)} S(\alpha)^{\oplus a_\alpha} \arrow[r, "M'_f"] & \bigoplus_{\rho\in \Delta(1)}  S([D_\rho])\cdot e_\rho \arrow[r, "\xi'"] & J_f([D]) \arrow[r] & 0   
\end{tikzcd} \/.
\end{equation}
Here $\xi'(e_\rho)= f_{x_\rho}$ and $\{a_\alpha\}\subset \ZZ$.

Recall that $\{[D_{\rho_1}], [D_{\rho_2}], \cdots , [D_{\rho_r}]\}$ is a basis of $\textup{Pic}(X)$. Then $\Hom(\textup{Pic(X)}, \ZZ)$ has the dual basis by   $\{\phi_{\rho_1}, \phi_{\rho_2}, \cdots ,\phi_{\rho_k}\}$, where $\phi_{\rho_j}([D_{\rho_i}])=\delta^i_j$ for $j=1,2,\cdots ,r$. 
By the generalized Euler relation~\eqref{seq; generalizedEulerRelation}
we can also form the following exact sequence of free $\textup{Pic(X)}$-graded modules:
\begin{equation}
\label{seq; toricsyzygy2}
\begin{tikzcd}
  \bigoplus_{k=1}^r S\cdot e'_k \oplus
  \bigoplus_{\alpha \in \textup{Pic}(X)} S(\alpha)^{\oplus a_\alpha}  
 \arrow[r, "M_f"] & \bigoplus_{\rho\in \Delta(1)}  S([D_\rho])\cdot e_\rho \arrow[r, "\xi"] & \left(J_f/(f)\right)([D]) \arrow[r] & 0
\end{tikzcd} \/, 
\end{equation}
where $\xi(e_\rho)=\overline{f_{x_\rho}}$, $M_f(e'_k)=\sum_\rho \phi_{\rho_k}(D_\rho)  x_\rho  e_\rho$ and $M_f$ agrees with $M'_f$ on the second factor.

By Example~\ref{exam; toric} we have $\bigoplus_{\rho\in \Delta(1)}  \widetilde{S(-[D_\rho])}\cong \bigoplus_{\rho\in \Delta(1)} \sO_X(- D_\rho)$. We denote this vector bundle by $\Upsilon$ following the notation in \cite{MR1420708}. 
Then from \cite[Theorem 1.2]{MR1420708} we have  a
 commutative diagram with exact rows 
\begin{equation}
\label{diagram; toricsyzygy}
\begin{tikzcd} 
& & \cQ   \arrow[r, "="]& \cQ  & \\
0 \arrow[r] & \Omega_X^1 \arrow[r, "i_X"] & \Upsilon  \arrow[r, "\tilde{\sigma}"]
\arrow[u,    two heads , "\tilde{\kappa}"] & \textup{Pic}(X)\otimes_{\ZZ_X} \sO_X \arrow[r]\arrow[u,two heads , "\tilde{\kappa}'"] & 0 \\
0 \arrow[r] & \Omega_X^1  \arrow[r, "v"]  \arrow[u, "="] & \left(\cP^1_X \cL\right)\otimes \cL^\vee \arrow[r, " "] \arrow[u, "u"] &  \sO_X \arrow[r] \arrow[u, "\tilde{\tau}"] & 0  
\end{tikzcd} \/.
\end{equation} 
The morphism $\tilde{\sigma}$ is induced by $\sigma\colon \bigoplus_{\rho\in \Delta(1)} S(-[D_\rho]) \to \textup{Pic}(X)\otimes_{\ZZ} S$ that sends $(g_\rho)_{\rho\in \Delta(1)}$ to $\sum_{\rho} [D_\rho]\otimes x_\rho g_{\rho}$. The right column is induced from the   exact sequence of abelian groups
\[
\begin{tikzcd}
  \ZZ \arrow[r, " \tau "] & \textup{Pic}(X) \arrow[r, "\kappa'"] & Q_\ZZ \arrow[r, ""] & 0
\end{tikzcd} \/,
\]
where $\tau$ sends $1$ to $[D]$.

\begin{prop}
\label{prop; vbmor}
If $[D]\neq 0$ in $\textup{Pic}(X)$, then   the morphism  $\tau$ is injective and $Q:= Q_\ZZ\otimes_{\ZZ} S$ is free.  
Thus both $\tilde{\tau}$ and $u$ are inclusions of vector subbundles and $\cQ\cong \widetilde{Q} \cong \sO_X^{r-1}$ is trivial. 
\end{prop}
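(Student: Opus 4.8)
The plan is to verify the four assertions in turn; the single substantive input is that $S$, being an algebra over a field of characteristic zero, is torsion free as an abelian group.

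\textbf{Injectivity of $\tau$ and freeness of $Q$.} First I would note that by Setup~\ref{setup; toricvar} the group $\textup{Pic}(X)$ is free abelian of finite rank $r$, hence torsion free, so that $[D]\neq 0$ forces $\tau\colon\ZZ\to\textup{Pic}(X)$, $1\mapsto[D]$, to be injective. Consequently $Q_\ZZ=\textup{Pic}(X)/\ZZ[D]$ is a finitely generated abelian group of rank $r-1$, and hence $Q_\ZZ\cong\ZZ^{r-1}\oplus T$ with $T$ a finite group; note that $T$ need not vanish (already for $X=\PP^n$ with $\deg D=d$ one has $Q_\ZZ=\ZZ/d$). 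Since $S$ is a $\CC$-algebra we get $T\otimes_\ZZ S=0$, so that $Q:=Q_\ZZ\otimes_\ZZ S\cong S^{r-1}$ is free of rank $r-1$, the isomorphism being degree preserving because it is obtained by tensoring a $\ZZ$-module isomorphism with $S$.

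\textbf{$\tilde\tau$ is a subbundle inclusion.} Next I would tensor the short exact sequence $0\to\ZZ\xrightarrow{\tau}\textup{Pic}(X)\xrightarrow{\kappa'}Q_\ZZ\to0$ with $S$ over $\ZZ$. Since $S$ is $\ZZ$-torsion free we have $\textup{Tor}_1^\ZZ(Q_\ZZ,S)=\textup{Tor}_1^\ZZ(T,S)=0$, so $0\to S\xrightarrow{\tau\otimes S}\textup{Pic}(X)\otimes_\ZZ S\to Q\to0$ is exact as a sequence of $\textup{Pic}(X)$-graded $S$-modules. Applying the exact functor $\widetilde{(\bullet)}$ of Theorem~\ref{theo; toriccorrespondence} and using $\widetilde{S}=\sO_X$ from Example~\ref{exam; toric} (together with additivity) recovers the exact right-hand column of \eqref{diagram; toricsyzygy} and identifies $\cQ=\widetilde{Q}\cong\widetilde{S^{r-1}}=\sO_X^{r-1}$. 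In particular $\cQ$ is trivial, so $\tilde\tau$ is an injection of locally free sheaves with locally free cokernel, i.e. an inclusion of vector subbundles.

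\textbf{$u$ is a subbundle inclusion.} Finally I would apply the snake lemma to the morphism of short exact sequences formed by the bottom and middle rows of \eqref{diagram; toricsyzygy}, with vertical maps $\mathrm{id}_{\Omega_X^1}$, $u$ and $\tilde\tau$ (the relevant squares commute by construction of the diagram). Since the left vertical map is an isomorphism, this yields $\ker u\cong\ker\tilde\tau=0$ and $\textup{coker}\,u\cong\textup{coker}\,\tilde\tau=\cQ\cong\sO_X^{r-1}$. As $(\cP^1_X\cL)\otimes\cL^\vee$ and $\Upsilon$ are locally free and the cokernel of $u$ is locally free, $u$ is an inclusion of vector subbundles. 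I expect the only delicate point in the whole argument to be the torsion phenomenon noted in the first step: $Q_\ZZ$ is generally not free, so both "$Q$ is free" and "$\cQ\cong\sO_X^{r-1}$" genuinely use that tensoring over $\ZZ$ with the characteristic-zero ring $S$ annihilates finite groups, and this same $\textup{Tor}_1^\ZZ$-vanishing is what promotes $\tilde\tau$, and hence $u$, from merely having locally free cokernel to being injective.
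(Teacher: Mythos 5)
Your proof is correct, and where it overlaps with the paper it is essentially the same argument: the paper's entire written proof consists of showing $\tau$ is injective by expanding $[D]$ in the basis $\{[D_{\rho_k}]\}$ and noting some coefficient $\phi_{\rho_k}(D)$ is nonzero, which is equivalent to your appeal to the torsion-freeness of $\textup{Pic}(X)$. The paper then leaves the remaining assertions (freeness of $Q$, the subbundle claims, triviality of $\cQ$) as immediate consequences, whereas you spell them out; in doing so you correctly isolate the one genuinely non-obvious point, namely that $Q_\ZZ=\textup{Pic}(X)/\ZZ[D]$ can have torsion (as it does already for $\PP^n$), so that both the freeness of $Q=Q_\ZZ\otimes_\ZZ S$ and the left-exactness of the tensored sequence $0\to S\to\textup{Pic}(X)\otimes_\ZZ S\to Q\to 0$ rely on $S$ being a $\QQ$-algebra, hence $\ZZ$-flat and annihilating finite groups under $\otimes_\ZZ$. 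Your subsequent use of the exactness of $\widetilde{(\bullet)}$ and the snake lemma applied to the bottom two rows of \eqref{diagram; toricsyzygy} to transfer injectivity and the locally free cokernel from $\tilde\tau$ to $u$ is exactly the intended (but unwritten) completion of the paper's argument.
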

\begin{proof}
Recall that $\{[D_{\rho_1}],   \cdots , [D_{\rho_r}]\}$ is a basis of $\textup{Pic}(X)$ and $\{\phi_{\rho_1}, \cdots ,\phi_{\rho_r} \}$ 	is the dual basis of $\textup{Hom}(\textup{Pic}(X), \ZZ)$. 
At each component  $[D_{\rho_k}]\otimes S$ the morphism $\tau$  is  the multiplication by degree  $\phi_{\rho_k}(D)$. Since $[D]\neq 0$, at least some $\phi_{\rho_k}(D)\neq 0$ and hence $\tau$ is injective.  
\end{proof}

Abusing notation we also denote by $\kappa'$ the induced map $\textup{Pic}X\otimes S\to Q$. Let   $\kappa$ the composition map $\kappa'\circ \sigma$ and   $K=\ker \kappa$ be the kernel module of $\kappa$.  
\begin{prop}
 If $[D]\neq 0\in \textup{Pic}(X)$,  then we have 
$\cP_X^1\sO_X(D)\cong \widetilde{K([D])}\cong \widetilde{K}\otimes \sO_X(D)$. 
\end{prop}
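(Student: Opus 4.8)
The plan is to extract the isomorphism directly from the commutative diagram~\eqref{diagram; toricsyzygy}, the exactness of the functor $\widetilde{(\bullet)}$, and Proposition~\ref{prop; vbmor}, without introducing any new geometry.

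First I would record the sheafifications of the data defining $K$. By Example~\ref{exam; toric} and additivity of $\widetilde{(\bullet)}$, the free module $\bigoplus_{\rho\in\Delta(1)}S(-[D_\rho])$ sheafifies to $\Upsilon=\bigoplus_\rho\sO_X(-D_\rho)$, and by Proposition~\ref{prop; vbmor} the module $Q=Q_\ZZ\otimes_\ZZ S$ sheafifies to $\cQ$. The map $\sigma$ sheafifies to the morphism $\tilde\sigma$ of~\eqref{diagram; toricsyzygy} and the $S$-linear map $\kappa'\colon\textup{Pic}(X)\otimes_\ZZ S\to Q$ sheafifies to $\tilde\kappa'$, both by construction; hence $\kappa=\kappa'\circ\sigma$ sheafifies to $\tilde\kappa'\circ\tilde\sigma$, which by the commutativity of the top-right square of~\eqref{diagram; toricsyzygy} is precisely the vertical map $\tilde\kappa\colon\Upsilon\to\cQ$. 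Applying $\widetilde{(\bullet)}$ to the short exact sequence $0\to K\to\bigoplus_\rho S(-[D_\rho])\to\operatorname{im}(\kappa)\to 0$ then identifies $\widetilde K$ with $\ker\bigl(\tilde\kappa\colon\Upsilon\to\cQ\bigr)$.

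Next I would identify this kernel using the middle column of~\eqref{diagram; toricsyzygy}. By Proposition~\ref{prop; vbmor} the map $u$ is an inclusion of subbundles and $\tilde\kappa$ is surjective; together with $\tilde\kappa\circ u=0$ and the rank equality $(n+1)+(r-1)=n+r=s=\rk\Upsilon$, this forces the middle column to be the short exact sequence $0\to(\cP^1_X\cL)\otimes\cL^\vee\xrightarrow{u}\Upsilon\xrightarrow{\tilde\kappa}\cQ\to 0$. Hence $\widetilde K\cong(\cP^1_X\cL)\otimes\cL^\vee=\cP^1_X\sO_X(D)\otimes\sO_X(-D)$, and tensoring with $\sO_X(D)$ gives $\widetilde K\otimes\sO_X(D)\cong\cP^1_X\sO_X(D)$. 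Finally, the remaining identification $\widetilde{K([D])}\cong\widetilde K\otimes\sO_X([D])=\widetilde K\otimes\sO_X(D)$ is the standard twisting property of the sheafification functor for the Cox ring (\cite{Cox95}, \cite[Ch.~6, Appendix]{CLS11}); chaining these gives the asserted chain of isomorphisms.

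Since the substantive content has already been packaged into~\eqref{diagram; toricsyzygy} and Proposition~\ref{prop; vbmor}, I do not expect a deep obstacle here. The one point that genuinely needs care is degree bookkeeping: one must check that the $\textup{Pic}(X)$-grading conventions are arranged so that $\sigma$, $\kappa$, and the identification $\widetilde{\bigoplus_\rho S(-[D_\rho])}=\Upsilon$ are all twist-compatible, so that no spurious factor of $\sO_X(D)$ (or of some $\sO_X(D_\rho)$) slips into the final isomorphism.
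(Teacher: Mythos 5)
Your argument is correct and is exactly the derivation the paper intends: the proposition is stated without proof immediately after diagram~\eqref{diagram; toricsyzygy} and Proposition~\ref{prop; vbmor} precisely so that it follows from identifying $\widetilde{K}=\ker\tilde{\kappa}$ (exactness of $\widetilde{(\bullet)}$) with $\mathrm{im}(u)\cong(\cP^1_X\cL)\otimes\cL^\vee$ via the subbundle/rank count $(n+1)+(r-1)=s$, and then twisting by $\cL=\sO_X(D)$. Your closing caution about grading conventions is the right one to flag, and the twist-compatibility $\widetilde{M(\alpha)}\cong\widetilde{M}\otimes\sO_X(\alpha)$ indeed holds here since $X$ is smooth and $\alpha\in\textup{Pic}(X)$.
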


Since $M'_f$ and $M_f$ are $\textup{Pic}(X)$-graded,  the  ranks $\rk M'_f(P)$ and $\rk M_f(P)$ are constant  for  any $P\in \pi^{-1}(p)$. Thus we  denote them by $\rk M'_f(p)$ and $\rk M_f(p)$ respectively.   We  denote
by $\kappa$ the composition map $\kappa'\circ \sigma$.  
Then the dimension of the vector space 
 $\kappa(P)\big(\ker M'^\vee_f(P)\big)$ is   independent of  $P\in \pi^{-1}(p)$ and equals the dimension of 
 $\tilde{\kappa}(p)\big(\ker \widetilde{M'^\vee_f}(p)\big)\subset \cQ(p)$.
\begin{defi}
\label{defi; logdefect}
We call $\rk M'_f(p)$ and $\rk M_f(p)$ the first syzygy rank  and the augmented first syzygy rank of $D$ at $p$.  
We call $\dim \left(\tilde{\kappa}(p)\big(\ker \widetilde{M'^\vee_f}(p)\big)\right)$  the logarithmic defect of the pair $(X, D)$ at $p$ and denote it by $\textup{Def}_{X,f}(p)$. 
\end{defi}

Theorem~\ref{theo; syzygyrankcriterion}  generalizes as follows.
\begin{theo}
\label{theo; toricsyzygy}
Let $X$ be a smooth projective toric variety as in Setup~\ref{setup; toricvar}.  Let $D\subset X$ be a reduced hypersurface cut by a global homogeneous polynomial $f\in S_{[D]}$  such that $[D]\neq 0$. Then $D$ is strongly Euler homogeneous at $p$ if and only if 
$$
\rk M'_f(p)+\textup{Def}_{X,f}(p)  = \rk M_f(p) \/.
$$
\end{theo}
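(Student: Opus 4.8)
The plan is to run the reduction that proved Theorem~\ref{theo; syzygyrankcriterion} in \S\ref{sec; syzygyrank}, now keeping track of the Picard number $r$ and of the auxiliary trivial bundle $\cQ\cong\sO_X^{r-1}$. By Proposition~\ref{prop; logrankSEH}, $D$ is strongly Euler homogeneous at $p$ if and only if $\rk j'(p)=\rk j(p)+1$ for any surjection $\sP_1\twoheadrightarrow\textup{Der}_X(-\log D)\otimes\cL^\vee$, so it suffices to build such a $\sP_1$ from the toric syzygy data and to express $\rk j(p)$ and $\rk j'(p)$ in terms of $\rk M_f(p)$, $\rk M'_f(p)$ and $\textup{Def}_{X,f}(p)$. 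First I would apply Cox's exact functor $\widetilde{(\bullet)}$ (Theorem~\ref{theo; toriccorrespondence}) to \eqref{seq; toricsyzygy1} and \eqref{seq; toricsyzygy2}. Writing $\sP_1'$ for the sheafification of $\bigoplus_\alpha S(\alpha)^{\oplus a_\alpha}$ and using $\bigoplus_\rho\widetilde{S([D_\rho])}=\Upsilon^\vee$ (Example~\ref{exam; toric}), $\widetilde{J_f}=\cJ_D$, $\widetilde{J_f/(f)}=\cR_D$ (Proposition~\ref{prop; JDandcJD}) and $\widetilde{M([D])}=\widetilde{M}\otimes\cL$, these sheafify to
\[
\sP_1'\xrightarrow{\widetilde{M'_f}}\Upsilon^\vee\xrightarrow{\widetilde{\xi'}}\cJ_D\otimes\cL\to 0,\qquad
\textup{Pic}(X)^\vee\otimes\sO_X\oplus\sP_1'\xrightarrow{((\tilde\sigma)^\vee,\,\widetilde{M'_f})}\Upsilon^\vee\xrightarrow{\widetilde{\xi}}\cR_D\otimes\cL\to 0,
\]
the identification of the $r$ Euler columns $e'_k$ of $M_f$ with $(\tilde\sigma)^\vee$ being a direct check against the dual basis $\{\phi_{\rho_k}\}$.

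Next I would extract the relevant exact sequences of vector bundles. Dualizing the middle row of \eqref{diagram; toricsyzygy} gives the generalized Euler sequence $0\to\textup{Pic}(X)^\vee\otimes\sO_X\xrightarrow{(\tilde\sigma)^\vee}\Upsilon^\vee\xrightarrow{\pi}\textup{Der}_X\to 0$, and dualizing its middle column $0\to(\cP^1_X\cL)\otimes\cL^\vee\xrightarrow{u}\Upsilon\xrightarrow{\tilde\kappa}\cQ\to 0$ gives $0\to\cQ^\vee\to\Upsilon^\vee\xrightarrow{u^\vee}(\cP^1_X\cL)^\vee\otimes\cL\to 0$, with $\pi=v^\vee\circ u^\vee$ since $i_X=u\circ v$. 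The one non-formal input is that the sheafified Jacobian map factors through $u^\vee$, namely $\widetilde{\xi'}=(\rho'\otimes\cL)\circ u^\vee$ with $\rho'$ as in \eqref{seq; sesq2global}; together with the commuting square relating $\rho'$ and the map $\rho$ of \eqref{seq; sesq1global} this also gives $\widetilde{\xi}=(\rho\otimes\cL)\circ\pi$. Granting this, $\operatorname{im}\widetilde{M'_f}=\ker\widetilde{\xi'}=(u^\vee)^{-1}\big(\textup{Der}_X(-\log D)\big)$, so $u^\vee$ carries $\operatorname{im}\widetilde{M'_f}$ onto $\textup{Der}_X(-\log D)\subset(\cP^1_X\cL)^\vee\otimes\cL$ and $\pi$ carries it onto $\textup{Der}_X(-\log D)\subset\textup{Der}_X$. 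Twisting by $\cL^\vee$, I take $\sP_1:=\sP_1'\otimes\cL^\vee$ with the surjection onto $\textup{Der}_X(-\log D)\otimes\cL^\vee$ induced by $\widetilde{M'_f}$; then, for this one choice of $\sP_1$, $j'^\vee=(u^\vee\circ\widetilde{M'_f})\otimes\cL^\vee$ and $j^\vee=(\pi\circ\widetilde{M'_f})\otimes\cL^\vee$.

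It then remains to compute ranks on fibers. Since the Cox quotient $\pi\colon U\to X$ is a $(\CC^*)^r$-bundle, $\rk\widetilde{M'_f}(p)=\rk M'_f(p)$; moreover $\ker\pi(p)=\operatorname{im}(\tilde\sigma)^\vee(p)$ has dimension $r$ (because $\tilde\sigma$ is surjective, $(\tilde\sigma)^\vee$ is a subbundle inclusion) and $\ker u^\vee(p)=\cQ^\vee(p)$ has dimension $r-1$. Rank‑of‑composition bookkeeping gives $\rk j(p)=\rk M'_f(p)-\dim\big(\operatorname{im}\widetilde{M'_f}(p)\cap\operatorname{im}(\tilde\sigma)^\vee(p)\big)$ and $\rk M_f(p)=r+\rk M'_f(p)-\dim\big(\operatorname{im}\widetilde{M'_f}(p)\cap\operatorname{im}(\tilde\sigma)^\vee(p)\big)$, hence $\rk M_f(p)=\rk j(p)+r$; likewise $\rk j'(p)=\rk M'_f(p)-\dim\big(\operatorname{im}\widetilde{M'_f}(p)\cap\cQ^\vee(p)\big)$, and a short annihilator computation identifies $\textup{Def}_{X,f}(p)=\dim\tilde\kappa(p)\big(\ker\widetilde{M'^\vee_f}(p)\big)$ with $(r-1)-\dim\big(\operatorname{im}\widetilde{M'_f}(p)\cap\cQ^\vee(p)\big)$, so $\rk j'(p)=\rk M'_f(p)-(r-1)+\textup{Def}_{X,f}(p)$. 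Feeding these two identities into $\rk j'(p)=\rk j(p)+1$ and cancelling $r-1$ yields exactly $\rk M'_f(p)+\textup{Def}_{X,f}(p)=\rk M_f(p)$.

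I expect the main obstacle to be the compatibility $\widetilde{\xi'}=(\rho'\otimes\cL)\circ u^\vee$: this says that, under the comparison of \cite{MR1420708} between the bundle of principal parts and $\Upsilon$, the first jet $j^1(f)$ and the Cox‑coordinate partial derivatives $(f_{x_\rho})_{\rho}$ induce the same map $\Upsilon^\vee\to\cJ_D\otimes\cL$, and it is precisely here that the toric structure — the generalized Euler relation \eqref{seq; generalizedEulerRelation} and the explicit shape of diagram~\eqref{diagram; toricsyzygy} — genuinely enters. The remaining steps are formal homological algebra together with careful bookkeeping of the line‑bundle twists.
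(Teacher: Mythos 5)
Your proposal is correct and follows essentially the same route as the paper's proof: sheafify the syzygy presentations via Cox's functor, factor $\widetilde{\xi'}$ and $\widetilde{\xi}$ through $u^\vee$ and $i_X^\vee$ using diagram~\eqref{diagram; toricsyzygy}, take $\sP_1$ to be the sheafification of the syzygy module, and reduce to Proposition~\ref{prop; logrankSEH} by rank bookkeeping. The only (cosmetic) difference is that the paper organizes the count through $\dim\ker \widetilde{M'^\vee_f}(p)$ decomposed along $u(p)$ and $\tilde{\kappa}(p)$, while you work dually with $\operatorname{im}\widetilde{M'_f}(p)$ intersected with $\ker u^\vee(p)=\cQ^\vee(p)$ and $\operatorname{im}(\tilde\sigma)^\vee(p)$; the identities obtained are identical.
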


\begin{rema}   
When $X=\PP^n$,   $u$ is an isomorphism for   any hypersurface $D$  of degree $\geq 1$ and  $\cQ(p)=\{0\}$. Thus we have $\textup{Def}_{X,f}(p)=0$ and the above theorem recovers  Theorem~\ref{theo; syzygyrankcriterion}.
\end{rema}

\begin{proof}[Proof of Theorem~\ref{theo; toricsyzygy}] 
The surjective graded  morphisms $\xi'$ and $\xi$ in \eqref{seq; toricsyzygy1} \eqref{seq; toricsyzygy2}  induce surjective  morphisms 
\[
\widetilde{\xi'}\colon \Upsilon^\vee  \longrightarrow \cJ_D\otimes \cL
  \ \text{ and } \ 
\widetilde{\xi}\colon \Upsilon^\vee \longrightarrow \cR_D\otimes \cL  \/.
\] 
A local computation shows that $\widetilde{\xi'}$ factors through $u^\vee$ and $\rho'$ in sequence \eqref{seq; sesq2global} 
\[
\widetilde{\xi'}=\rho'\circ u^\vee \colon \Upsilon^\vee \longrightarrow \left(\cP^1_X \cL\right)^\vee\otimes \cL \longrightarrow \cJ_D\otimes \cL \/,
\]  
while $\widetilde{\xi}$ factors through $i_X^\vee=v^\vee\circ u^\vee$ and $\rho$ in sequence \eqref{seq; sesq1global}
\[
\widetilde{\xi}=\rho\circ i_X^\vee \colon \Upsilon^\vee \longrightarrow \left(\cP^1_X \cL\right)^\vee\otimes \cL \longrightarrow  \textup{Der}_X \longrightarrow \cR_D\otimes \cL \/.
\]

We then have the following diagram with exact rows
\[
\begin{tikzcd} 
0\arrow[r] & Im \left(\widetilde{M'_f} \right) \arrow[r]\arrow[d, " "] & \Upsilon^\vee \arrow[r, "\widetilde{\xi'}"]\arrow[d, "u^\vee"]  & \cJ_D\otimes \cL \arrow[r] \arrow[d, "="]  & 0 \\
0\arrow[r] & \textup{Der}_X(-\log D) \arrow[r, ""] \arrow[d, "="]  & \left(\cP^1_X \cL\right)^\vee\otimes \cL \arrow[r, "\rho'"] \arrow[d,  "v^\vee "]    & \cJ_D\otimes \cL \arrow[r] \arrow[d, two heads," "]  & 0  \\
0\arrow[r] & \textup{Der}_X(-\log D) \arrow[r, ""] & \textup{Der}_X \arrow[r, "\rho'"]    & \cR_D\otimes \cL \arrow[r]  & 0  
\end{tikzcd} 
\]   
By  Proposition~\ref{prop; vbmor} we see that   $u^\vee$ is surjective. Then $u^\vee$ maps $Im \left(\widetilde{M'_f} \right)$ onto $\textup{Der}_X(-\log D)$ and we may    take $\sP_1$ to be
$\bigoplus_{\alpha \in \textup{Pic}(X)} \sO(\alpha)^{\oplus a_\alpha}$ when computing the morphisms $j$ and $j'$. Thus we have 
\[
\begin{tikzcd}
 \bigoplus_{\alpha \in \textup{Pic}(X)} \sO(\alpha)^{\oplus a_\alpha} \arrow[r, "\widetilde{M'_f}"]\arrow[d, "="] & \Upsilon^\vee \arrow[r, "\widetilde{\xi'}"]\arrow[d, two heads , "u^\vee"]  & \cJ_D\otimes \cL \arrow[r] \arrow[d, "="]  & 0 \\
   \bigoplus_{\alpha \in \textup{Pic}(X)} \sO(\alpha)^{\oplus a_\alpha}  \arrow[r, "j'^{\vee}"] & \left(\cP^1_X \cL\right)^\vee\otimes \cL \arrow[r, "\rho'"]    & \cJ_D\otimes \cL \arrow[r]  & 0  
\end{tikzcd}  \/.
\]
Combining with Diagram~\eqref{diagram; toricsyzygy},   evaluating at $p$  we have
\[
\begin{tikzcd}
  \left(\cP^1_X \cL \otimes \cL^\vee \right)(p)\cong \CC^{n+1} \arrow[r, hook,   "u(p)"]\arrow[d, "j'(p)"'] & 
  \Upsilon(p)\cong \CC^s  \arrow[r, two heads , "\tilde{\kappa}(p)"] \arrow[dl, "\widetilde{M'^\vee_f}(p)"] & \cQ(p)\cong \CC^s/Im(u(p)) \\
\CC^{m}  &  &  
\end{tikzcd} \/.
\]
Here $m:= \sum_\alpha a_\alpha$. Since $\tilde{\kappa}(p)$ is surjective and   $\ker \tilde{\kappa}(p)= Im(u(p))$, we have
\[
\dim \ker \widetilde{M'^\vee_f}(p) =\dim \left( Im(u(p))\cap \ker \widetilde{M'^\vee_f}(p) \right) + \dim \tilde{\kappa}(p)\left( \ker \widetilde{M'^\vee_f}(p) \right) \/.
\] 
Since $u(p)$ is injective, we have
\[
\dim \left( Im(u(p))\cap \ker \widetilde{M'^\vee_f}(p) \right)=\dim \ker\left(\widetilde{M'^\vee_f}(p)\circ u(p) \right)=\dim \ker j'(p) \/.
\]
As $\rk \widetilde{M'_f}(p)=\rk \widetilde{M'^\vee_f}(p)$,  we have
\[ 
\rk \widetilde{M'_f}(p) = s-\dim \ker \widetilde{M'^\vee_f}(p)= s-  \dim \ker j'(p)
 - \text{Def}_{X,f}(p)   
=    \rk j'(p) + r -1- \text{Def}_{X,f}(p)  \/.
\]

On the other hand, identifying $\bigoplus_{k=1}^r \widetilde{S\cdot e'_k}$ with $\textup{Pic}(X)\otimes_{\ZZ_X} \sO_X$ we have   
\[
\begin{tikzcd} 
\textup{Pic}(X)\otimes_{\ZZ_X} \sO_X   \arrow[d, hook] \arrow[r, "="] & \textup{Pic}(X)\otimes_{\ZZ_X} \sO_X 
\arrow[d, hook]  & &\\
\textup{Pic}(X)\otimes_{\ZZ_X} \sO_X  \oplus
 \bigoplus_{\alpha \in \textup{Pic}(X)} \sO(\alpha)^{\oplus a_\alpha}
 \arrow[r, "\widetilde{M_f} "]\arrow[d,  two heads, "pr_2"] & \Upsilon^\vee \arrow[r, "\widetilde{\xi}"]
 \arrow[d, two heads, "i_X^\vee"]  & \cR_D\otimes \cL \arrow[r] \arrow[d, "="]  & 0 \\
  \bigoplus_{\alpha \in \textup{Pic}(X)} \sO(\alpha)^{\oplus a_\alpha} \arrow[r, "j^{\vee}"]   & \textup{Der}_X \arrow[r, "\rho'"]   & \cR_D\otimes \cL  \arrow[r] & 0  
\end{tikzcd}   \/.
\]     
This shows that  
\[
\rk \widetilde{M_f}(p) =\rk j(p)+r 
=\rk \widetilde{M'_f}(p) +\textup{Def}_{X,f}(p)+\rk j(p)+1-\rk j'(p)  \/.
\]
Since $\rk \widetilde{M'_f}(p)=\rk M'_f(p)$ and $\rk \widetilde{M_f}(p)=\rk M_f(p)$, applying  Proposition~\ref{prop; logrankSEH} we then obtain the statement of the theorem.
\end{proof}

From the above proof   we  see that 
$
\rk  M'_f(p)\leq \rk j'(p) +r-1\leq \rk j(p)+r = \rk M_f(p)  
$
always holds. Thus we have the following sufficient criterion.
 \begin{coro}
Under the same assumption as in Theorem~\ref{theo; toricsyzygy}, if $\rk  M'_f(p)=\rk  M_f(p)$  then $D$ is strongly Euler homogeneous at $p$. 
 \end{coro}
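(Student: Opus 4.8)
The plan is to read this off directly from Theorem~\ref{theo; toricsyzygy} together with the inequality displayed just above the statement. First I would recall that the logarithmic defect $\textup{Def}_{X,f}(p)$ is, by construction (Definition~\ref{defi; logdefect}), the dimension of a complex vector space, hence a non-negative integer. Next I would invoke the chain
\[
\rk M'_f(p) \;\leq\; \rk j'(p) + r - 1 \;\leq\; \rk j(p) + r \;=\; \rk M_f(p)
\]
extracted from the proof of Theorem~\ref{theo; toricsyzygy}; the first inequality is in fact the identity $\rk M'_f(p) = \rk j'(p) + r - 1 - \textup{Def}_{X,f}(p)$, so that the unconditional bound $\rk M'_f(p) + \textup{Def}_{X,f}(p) \leq \rk M_f(p)$ holds.

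Now suppose $\rk M'_f(p) = \rk M_f(p)$. Then the bound just recalled becomes $\rk M'_f(p) + \textup{Def}_{X,f}(p) \leq \rk M'_f(p)$, which forces $\textup{Def}_{X,f}(p) \leq 0$ and hence $\textup{Def}_{X,f}(p) = 0$. Substituting back, the equality $\rk M'_f(p) + \textup{Def}_{X,f}(p) = \rk M_f(p)$ required by Theorem~\ref{theo; toricsyzygy} is satisfied, and that theorem then yields that $D$ is strongly Euler homogeneous at $p$.

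There is no real obstacle here: the statement is simply the degenerate half of the biconditional in Theorem~\ref{theo; toricsyzygy}, and the only input beyond that theorem is the non-negativity of $\textup{Def}_{X,f}(p)$. If one prefers to bypass the toric bookkeeping, the same argument shows that under the hypothesis the chain of inequalities collapses to a string of equalities, which in particular forces $\rk j'(p) = \rk j(p) + 1$; by Proposition~\ref{prop; logrankSEH} this again says $D$ is strongly Euler homogeneous at $p$.
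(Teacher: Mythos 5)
Your argument is correct and matches the paper's own reasoning: both rest on the chain $\rk M'_f(p) \leq \rk j'(p)+r-1 \leq \rk j(p)+r = \rk M_f(p)$ extracted from the proof of Theorem~\ref{theo; toricsyzygy}, together with the non-negativity of $\textup{Def}_{X,f}(p)$, so that equality of the endpoints forces the defect to vanish and the criterion of the theorem (equivalently, $\rk j'(p)=\rk j(p)+1$ and Proposition~\ref{prop; logrankSEH}) to apply. No gaps.
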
 
 
If $D$ has an isolated singularity at $p$, then $\rk j(p)=0$ and hence $\rk  M_f(p) =\rk \textup{Pic}(X)$ equals the Picard number of $X$. Thus we have 
\begin{coro}
Under the same assumption as in Theorem~\ref{theo; toricsyzygy}. If $p$ is an isolated singularity, then  $D$ is strongly Euler homogeneous at $p$ if and only if $\rk  M'_f(p)+\textup{Def}_{X,f}(p)=\rk \textup{Pic}(X)$. 
\end{coro}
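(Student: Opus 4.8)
The plan is to deduce the corollary as a plain substitution into Theorem~\ref{theo; toricsyzygy}, the only extra input being the evaluation $\rk M_f(p) = \rk\textup{Pic}(X)$ at an isolated singular point. Once one knows $\rk M_f(p) = \rk\textup{Pic}(X)$ whenever $p$ is an isolated point of $D_{sing}$, the equivalence ``$D$ is strongly Euler homogeneous at $p$ $\iff$ $\rk M'_f(p) + \textup{Def}_{X,f}(p) = \rk M_f(p)$'' of Theorem~\ref{theo; toricsyzygy} reads exactly $\rk M'_f(p) + \textup{Def}_{X,f}(p) = \rk\textup{Pic}(X)$, which is the assertion; both implications come for free.

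To establish $\rk M_f(p) = \rk\textup{Pic}(X)$, I would use that the proof of Theorem~\ref{theo; toricsyzygy} already yields $\rk M_f(p) = \rk\widetilde{M_f}(p) = \rk j(p) + r$ with $r = \rk\textup{Pic}(X)$ (this is read off from the last commutative diagram there: the first $r$ columns of $\widetilde{M_f}$ account for the direct summand $\textup{Pic}(X)\otimes_{\ZZ_X}\sO_X$, and modulo that summand one recovers the presentation $j^\vee$ of $\cR_D\otimes\cL$). So it remains only to see $\rk j(p) = 0$. For this, recall, exactly as in the proof of Corollary~\ref{coro; isohyper}, that $\rk j(p)$ is the rank of the evaluation $\textup{Der}_X(-\log D)(p)\to \textup{Der}_X(p) = T_pX$, i.e. the dimension at $p$ of the logarithmic stratum of $D$ through $p$; since $\textup{Der}_X(-\log D)$ is defined in local analytic coordinates, this description is valid over any smooth toric $X$. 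The logarithmic stratification refines the decomposition of $D$ into $D_{sm}$ and $D_{sing}$, so the stratum through $p$ lies in $D_{sing}$; being connected and containing $p$, which is isolated in $D_{sing}$ by hypothesis, it can only be $\{p\}$. Hence $\rk j(p) = 0$ and $\rk M_f(p) = r = \rk\textup{Pic}(X)$.

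Since all the pieces are already available --- in fact this computation is the content of the paragraph immediately preceding the corollary --- there is no genuine obstacle. The only steps deserving a little attention are the bookkeeping identity $\rk M_f(p) = \rk j(p) + r$ extracted from the final diagram of the proof of Theorem~\ref{theo; toricsyzygy}, and the standard fact that an isolated point of $D_{sing}$ coincides with its own logarithmic stratum.
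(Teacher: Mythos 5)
Your proposal is correct and follows exactly the paper's route: the paper likewise observes that an isolated singularity forces $\rk j(p)=0$ (via the logarithmic stratification, as in Corollary~\ref{coro; isohyper}), reads off $\rk M_f(p)=\rk j(p)+r$ from the final diagram in the proof of Theorem~\ref{theo; toricsyzygy}, and substitutes into that theorem. The extra detail you supply on why the logarithmic stratum through an isolated singular point is $\{p\}$ is a welcome but inessential elaboration of what the paper asserts.
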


\newpage 
 
\bibliographystyle{plain}
\bibliography{refRQHC}

\end{document}